 \def\botcaption#1#2{\medskip\centerline{{\scshape #1.}\kern8pt
 {\rm #2}}\bigskip}
 \newcounter{enunciato}[section]
 \newtheorem{ittheorem}{Theorem}
 \newtheorem{itlemma}{Lemma}
 \newtheorem{itproposition}{Proposition}
 \newtheorem{itdefinition}{Definition}
 \newtheorem{itassumption}{Assumption}
 \newtheorem{itremark}{Remark}
 \newtheorem{itclaim}{Claim}
 \newtheorem{itcorollary}{Corollary}
 \newenvironment{theorem}{\addtocounter{enunciato}{1}
 \begin{ittheorem}}{\end{ittheorem}}
 \newenvironment{corollary}{\addtocounter{enunciato}{1}
 \begin{itcorollary}}{\end{itcorollary}}
 \newenvironment{lemma}{\addtocounter{enunciato}{1}
 \begin{itlemma}}{\end{itlemma}}
 \newenvironment{proposition}{\addtocounter{enunciato}{1}
 \begin{itproposition}}{\end{itproposition}}
 \newenvironment{definition}{\addtocounter{enunciato}{1}
 \begin{itdefinition}}{\end{itdefinition}}
 \newenvironment{assumption}{\addtocounter{enunciato}{1}
 \begin{itassumption}}{\end{itassumption}}
 \newenvironment{remark}{\addtocounter{enunciato}{1}
 \begin{itremark}}{\end{itremark}}
 \newenvironment{claim}{\addtocounter{enunciato}{1}
 \begin{itclaim}}{\end{itclaim}}
 \newenvironment{proof}{\noindent {\bf Proof.\,}
 }{\hspace*{\fill}$\square$\medskip}
 \newcommand{\be}[1]{\begin{equation}\label{#1}}
 \newcommand{\ee}{\end{equation}}
 \newcommand{\bl}[1]{\begin{lemma}\label{#1}}
 \newcommand{\el}{\end{lemma}}
 \newcommand{\br}[1]{\begin{remark}\label{#1}}
 \newcommand{\er}{\end{remark}}
 \newcommand{\bt}[1]{\begin{theorem}\label{#1}}
 \newcommand{\et}{\end{theorem}}
 \newcommand{\bd}[1]{\begin{definition}\label{#1}}
 \newcommand{\ed}{\end{definition}}
 \newcommand{\bcl}[1]{\begin{claim}\label{#1}}
 \newcommand{\ecl}{\end{claim}}
 \newcommand{\bp}[1]{\begin{proposition}\label{#1}}
 \newcommand{\ep}{\end{proposition}}
 \newcommand{\bc}[1]{\begin{corollary}\label{#1}}
 \newcommand{\ec}{\end{corollary}}
 \newcommand{\bpr}{\begin{proof}}
 \newcommand{\epr}{\end{proof}}
 \newcommand{\bi}{\begin{itemize}}
 \newcommand{\ei}{\end{itemize}}
 \newcommand{\ben}{\begin{enumerate}}
 \newcommand{\een}{\end{enumerate}}
 \def\botcaption#1#2{\medskip\centerline{{\scshape #1.}\kern8pt
 {\rm #2}}\bigskip}
 \def \ba {\begin{array}}
 \def \ea {\end{array}}
 \def \Z {{\mathbb Z}}
 \def \R {{\mathbb R}}
 \def \N {{\mathbb N}}
 \def \P {{\mathbb P}}
 \def \E {{\mathbb E}}
 \def \cX {{\mathcal X}}
 \def \cC {{\mathcal C}}
 \def \cW {{\mathcal W}}
 \def \cD {{\mathcal D}}
 \def \cL {{\mathcal L}}
 \def \cI {{\mathcal I}}
 \def \cJ {{\mathcal J}}
 \def \cR {{\mathcal R}}
 \def \cA {{\mathcal A}}
 \def \cM {{\mathcal M}}
 \def \cN {{\mathcal N}}
 \def \cE {{\mathcal E}}
 \def \cP {{\mathcal P}}
 \def \cQ {{\mathcal Q}}
 \def \cB {{\mathcal B}}
 \def \cC {{\mathcal C}}
 \def \cU {{\mathcal U}}
 \def \cV {{\mathcal V}}
 \def \cH {{\mathcal H}}
 \def \cS {{\mathcal S}}
 \def \AB {\mathrm{int}}
 \def \nAB {\mathrm{nint}}
 \def \ind {{1}}
 \def \gep {{\varepsilon}}
 \def \DOM {{\hbox{\footnotesize\rm DOM}}}
 \def \CONE {{\hbox{\footnotesize\rm CONE}}}
 \def \EIGH {{\hbox{\footnotesize\rm EIGH}}}
\begin{document}

\title{Free energy of a copolymer in a micro-emulsion}

\author{\renewcommand{\thefootnote}{\arabic{footnote}}
F.\ den Hollander
\footnotemark[1]\,\,\,\footnotemark[2]
\\
\renewcommand{\thefootnote}{\arabic{footnote}}
N.\ P\'etr\'elis
\footnotemark[3]
}

\footnotetext[1]
{Mathematical Institute, Leiden University, P.O.\ Box 9512,
2300 RA Leiden, The Netherlands}\,

\footnotetext[2]
{EURANDOM, P.O.\ Box 513, 5600 MB Eindhoven, The Netherlands}

\footnotetext[3]
{Laboratoire de Math\'ematiques Jean Leray UMR 6629,
Universit\'e de Nantes, 2 Rue de la Houssini\`ere,\\ 
BP 92208, F-44322 Nantes Cedex 03, France}
\maketitle

\begin{abstract}
In this paper we consider a two-dimensional model of a copolymer consisting of a 
random concatenation of hydrophilic and hydrophobic monomers, immersed in a 
micro-emulsion of random droplets of oil and water. The copolymer interacts with 
the micro-emulsion through an interaction Hamiltonian that favors matches and 
disfavors mismatches between the monomers and the solvents, in such a way that 
the interaction with the oil is stronger than with the water.

The configurations of the copolymers are directed self-avoiding paths in which 
only steps up, down and right are allowed. The configurations of the micro-emulsion 
are square blocks with oil and water arranged in percolation-type fashion. The 
only restriction imposed on the path is that in every column of blocks its vertical 
displacement on the block scale is bounded. The way in which the copolymer enters 
and exits successive columns of blocks is a directed self-avoiding path as well, 
but on the block scale. We refer to this path as the coarse-grained self-avoiding 
path. We are interested in the limit as the copolymer and the blocks become large, 
in such a way that the copolymer spends a long time in each block yet visits many 
blocks. This is a coarse-graining limit in which the space-time scales of the 
copolymer and of the micro-emulsion become separated. 

We derive a {\it variational formula} for the {\it quenched free energy per monomer}, 
where quenched means that the disorder in the copolymer and the disorder in the 
micro-emulsion are both frozen. In a sequel paper we will analyze this variational
formula and identify the phase diagram. It turns out that there are two regimes, 
{\it supercritical} and {\it subcritical}, depending on whether the oil blocks 
percolate or not along the coarse-grained self-avoiding path. The phase diagrams 
in the two regimes turn out to be completely different.

In earlier work we considered the same model, but with an unphysical restriction: 
paths could enter and exit blocks only at diagonally opposite corners. Without 
this restriction, the variational formula for the quenched free energy is more 
complicated, but in the sequel paper we will see that it is still tractable enough 
to allow for a qualitative analysis of the phase diagram.

Part of our motivation is that our model can be viewed as a coarse-grained version 
of the well-known {\it directed polymer with bulk disorder}. The latter has been 
studied intensively in the literature, but no variational formula is as yet available. 

\vskip 0.5truecm
\noindent
\emph{AMS} 2000 \emph{subject classifications.} 60F10, 60K37, 82B27.\\
\emph{Key words and phrases.} Random copolymer, random emulsion, free energy, 
percolation, variational formula, large deviations, concentration of measure.\\
Acknowledgment: FdH is supported by ERC Advanced Grant 267356 VARIS. NP is 
grateful for hospitality at EURANDOM in November 2010, and at the Mathematical 
Institute of Leiden University in October and November 2011 as visiting researcher 
within the VARIS-project.
\end{abstract}


\section{Introduction and main result}
\label{S1}

In Section~\ref{Mfe} we define the model. In Section~\ref{varfor} we state our main 
result, a \emph{variational formula for the quenched free energy per monomer of a random 
copolymer in a random emulsion} (Theorem \ref{varformula} below). In Section~\ref{discus}
we discuss the significance of this variational formula and place it in a broader
context. Section~\ref{keyingr} gives a precise definition of the various ingredients 
in the variational formula, and states some key properties of these ingredients 
formulated in terms of a number of propositions. The proof of these propositions 
is deferred to Section~\ref{proofprop1}. The proof of the variational formula is 
given in Section~\ref{proofofgene}. Appendices~\ref{Path entropies}--\ref{Computation} 
contain a number of technical facts that are needed in 
Sections~\ref{keyingr}--\ref{proofofgene}. 

For a general overview on polymers with disorder, we refer the reader to the monographs 
by Giacomin~\cite{G07} and den Hollander~\cite{dH}.


\subsection{Model and free energy}
\label{Mfe}

To build our model, we distinguish between three scales: (1) the \emph {microscopic} 
scale associated with the size of the monomers in the copolymer ($=1$, by convention); 
(2) the {\emph {mesoscopic}} scale associated with the size of the droplets in the 
micro-emulsion ($L_n\gg1$); (3) the {\emph {macroscopic}} scale associated with the 
size of the copolymer ($n\gg L_n$). 

\medskip\noindent
{\bf Copolymer configurations.}
Pick $n\in \N\cup \{\infty\}$ and let $\cW_n$ be the set of $n$-step \emph{directed 
self-avoiding paths} starting at the origin and being allowed to move \emph{upwards, 
downwards and to the right}, i.e., 
\begin{align}
\label{defw}
\nonumber
\cW_n &= \big\{\pi=(\pi_i)_{i=0}^n \in (\N_0\times\Z)^{n+1}\colon\,\pi_0=(0,1),\\
&\qquad \pi_{i+1}-\pi_i\in \{(1,0),(0,1),(0,-1)\}\,\,\forall\,0\leq i< n,
\,\pi_i\neq \pi_j\,\,\forall\,0\leq i<j \leq n\big\}.
\end{align}
The copolymer is associated with the path $\pi$. The $i$-th monomer is associated with 
the bond $(\pi_{i-1},\pi_i)$. The starting point $\pi_0$ is located at $(0,1)$ for 
technical convenience only.

\begin{figure}[htbp]
\begin{center}
\vspace{-.5cm}
\includegraphics[width=.27\textwidth]{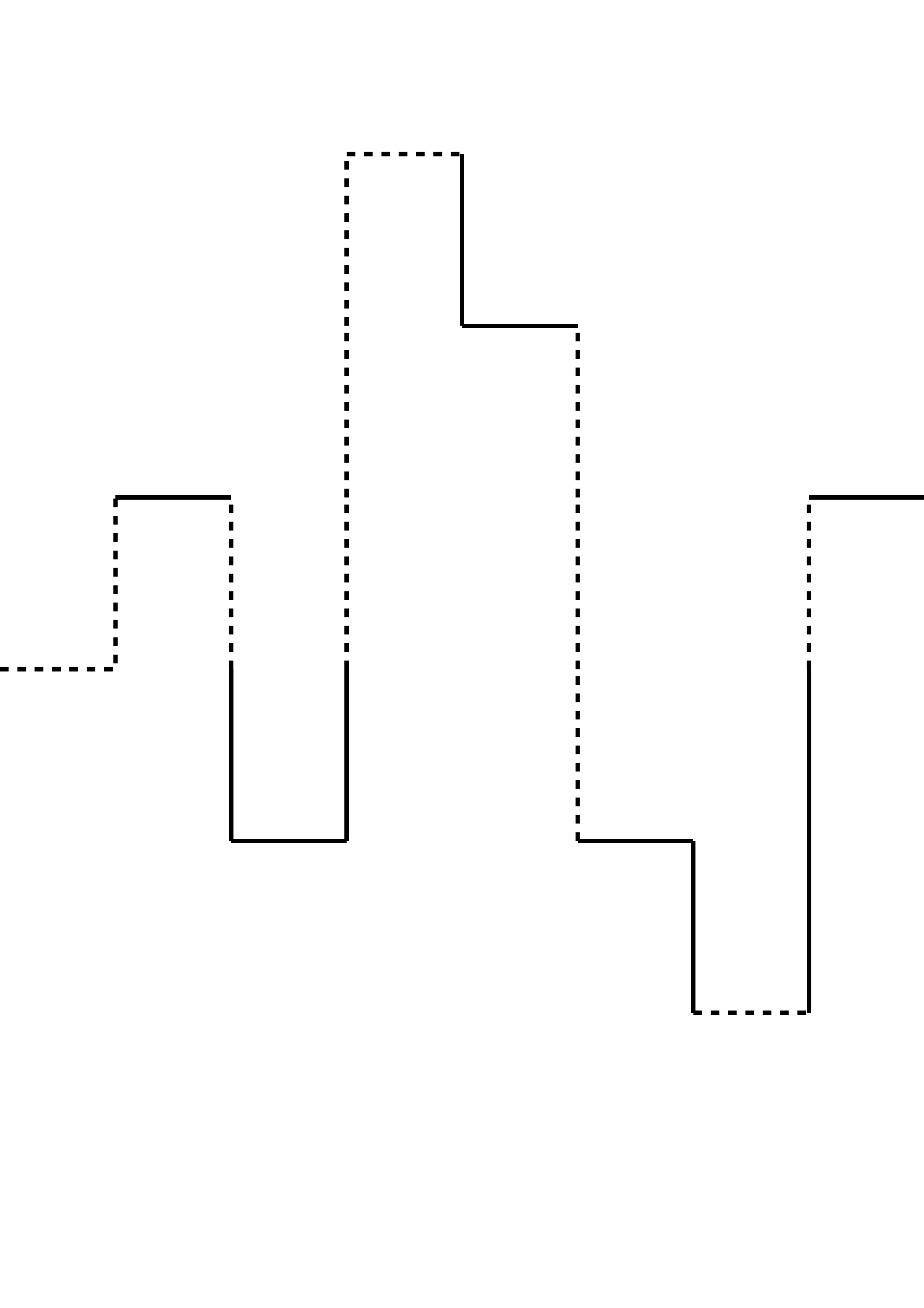}
\vspace{-1cm}
\caption{Microscopic disorder $\omega$ in the copolymer. Dashed edges represent 
monomers of type $A$ (hydrophobic), drawn edges represent monomers of type $B$
(hydrophilic).}
\label{fig-micrdis}
\end{center}
\end{figure}

\medskip\noindent
{\bf Microscopic disorder in the copolymer.} 
Each monomer is randomly labelled $A$ (hydrophobic) or $B$ (hydrophilic), with probability 
$\frac{1}{2}$ each, independently for different monomers. The resulting labelling is 
denoted by
\be{bondlabel}
\omega = \{\omega_i \colon\, i \in \N\} \in \{A,B\}^\N
\ee
and represents the \emph{randomness of the copolymer}, i.e., $\omega_i=A$ (respectively,
$\omega_i=B$) means that the $i$-th monomer is of type $A$ (respectively, $B$); see 
Fig.~\ref{fig-micrdis}.

\begin{figure}[htbp]
\begin{center}
\includegraphics[width=.42\textwidth]{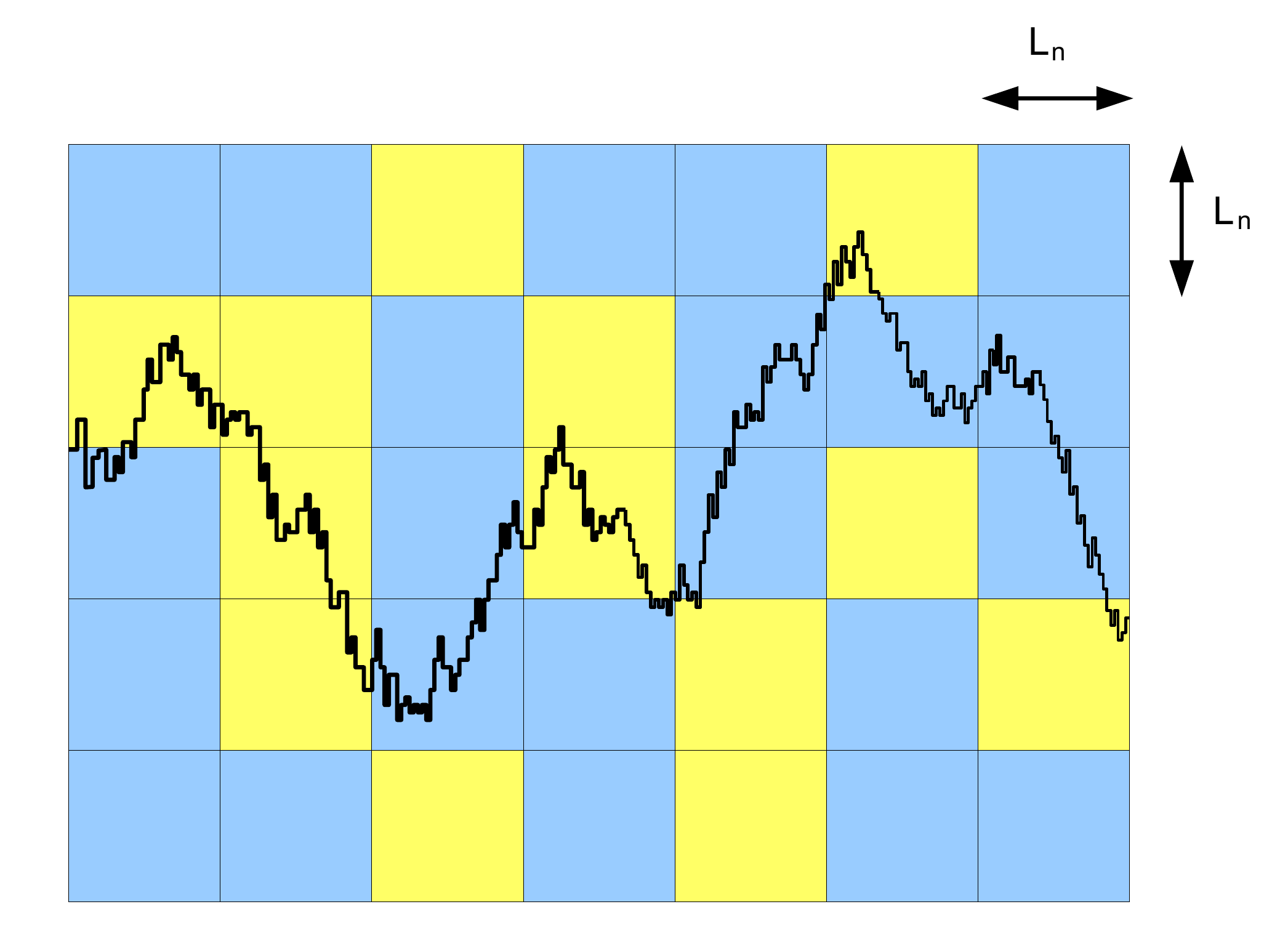}
\caption{Mesoscopic disorder $\Omega$ in the micro-emulsion. Light shaded blocks 
represent droplets of type $A$ (oil), dark shaded blocks represent droplets of 
type $B$ (water). Drawn is also the copolymer, but without an indication of the 
microscopic disorder $\omega$ attached to it.} 
\label{fig-mesdis}
\end{center}
\end{figure} 

\medskip\noindent
{\bf Mesoscopic disorder in the micro-emulsion.}
Fix $p \in (0,1)$ and $L_n \in \N$. Partition $(0,\infty)\times\R$ into square blocks of 
size $L_n$:
\be{blocks}
(0,\infty)\times \R= \bigcup_{x \in \N_0 \times \Z} \Lambda_{L_n}(x), \qquad
\Lambda_{L_n}(x) = xL_n + (0,L_n]^2.
\ee
Each block is randomly labelled $A$ (oil) or $B$ (water), with probability $p$, 
respectively, $1-p$, independently for different blocks. The resulting labelling is 
denoted by
\be{blocklabel}
\Omega = \{\Omega(x)\colon\,x \in \N_0\times \Z\} \in \{A,B\}^{\N_0\times \Z}
\ee
and represents the \emph{randomness of the micro-emulsion}, i.e., $\Omega(x)=A$ 
(respectively, $\Omega(x)=B$) means that the $x$-th block is of type $A$ (repectively, 
$B$); see Fig.~\ref{fig-mesdis}. The size of the blocks $L_n$ is assumed to be 
non-decreasing and to satisfy
\be{speed}
\lim_{n\to \infty} L_n = \infty \quad\text{and} \quad 
\lim_{n\to \infty} \tfrac{L_n}{n} = 0,
\ee  
i.e., the blocks are large compared to the monomer size but (sufficiently) small 
compared to the copolymer size. For convenience we assume that if an $A$-block 
and a $B$-block are on top of each other, then the interface belongs to the 
$A$-block.

\medskip\noindent 
{\bf Path restriction.} 
We bound the vertical displacement on the block scale in each column of blocks by 
$M\in\N$. The value of $M$ will be arbitrary but fixed. In other words, instead
of considering the full set of trajectories $\cW_n$, we consider only trajectories 
that exit a column through a block at most $M$ above or $M$ below the block where
the column was entered (see Fig.~\ref{fig-const}). Formally, we partition $(0,\infty)
\times \R$ into columns of blocks of width $L_n$, i.e.,  
\be{blockcol}
(0,\infty)\times\R = \cup_{j\in \N_0} \cC_{j,L_n}, \qquad 
\cC_{j,L_n}=\cup_{k\in \Z} \Lambda_{L_n}(j,k),
\ee
where $C_{j,L_n}$ is the $j$-th column. For each $\pi \in \cW_n$, we let $\tau_j$ 
be the time at which $\pi$ leaves the $(j-1)$-th column and enters the $j$-th column, 
i.e.,
\be{deftau}
\tau_j = \sup\{i\in \N_{0} \colon \,\pi_i\in \cC_{j-1,n}\}
= \inf \{i\in \N_0 \colon \, \pi_i\in \cC_{j,n} \}-1,
\qquad j = 1,\dots,N_\pi-1,
\ee
where $N_\pi$ indicates how many columns have been visited by $\pi$. Finally, we let 
$v_{-1}(\pi)=0$ and, for $j \in \{0,\dots,N_\pi-1\}$, we let $v_{j}(\pi) \in \Z$ be 
such that the block containing the last step of the copolymer in $\cC_{j,n}$ is 
labelled by $(j,v_j(\pi))$, i.e., $(\pi_{\tau_{j+1}-1},\pi_{\tau_{j+1}}) \in \Lambda_{L_N}
(j,v_j(\pi))$. Thus, we restrict $\cW_{n}$ to the subset $\cW_{n,M}$ defined as 
\be{defwalt}
\cW_{n,M} = \big\{\pi\in \cW_n \colon\, |v_j(\pi)-v_{j-1}(\pi)|\leq M
\,\,\forall\,j\in \{0,\dots,N_\pi-1\} \big\}.
\ee

\begin{figure}[htbp]
\begin{center}
\includegraphics[width=.3\textwidth]{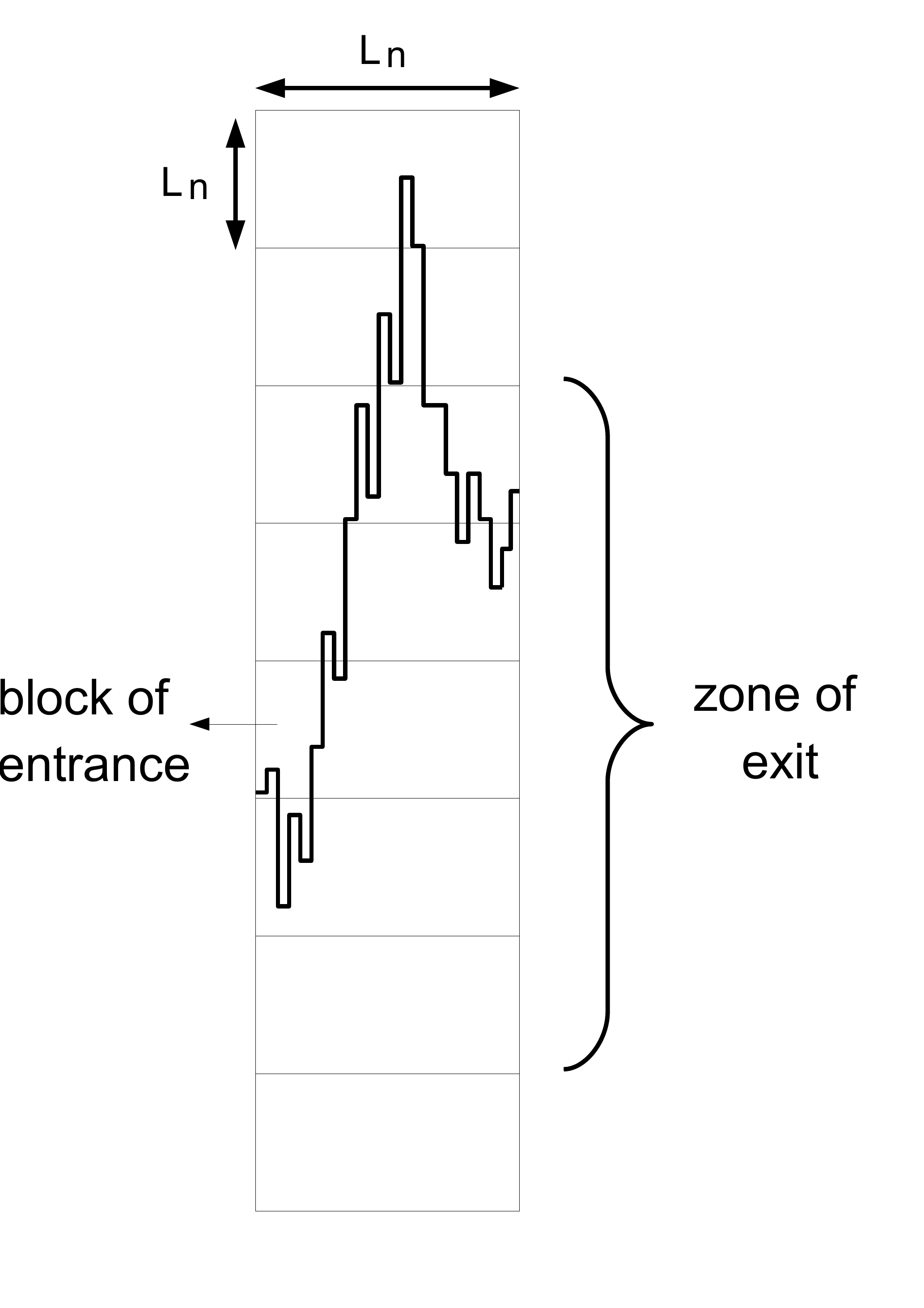}
\caption{Example of a trajectory $\pi \in\cW_{n,M}$ with $M=2$ crossing the column 
$\cC_{0,L_n}$ with $v_0(\pi)=2$.} 
\label{fig-const}
\end{center}
\end{figure} 

\medskip\noindent 
{\bf Hamiltonian and free energy.} Given $\omega,\Omega,M$ and $n$, with each path 
$\pi \in \cW_{n,M}$ we associate an \emph{energy} given by the Hamiltonian
\be{Hamiltonian}
H_{n,L_n}^{\omega,\Omega}(\pi)
=  \sum_{i=1}^n \Big(\alpha\, 1\Big\{\omega_i=\Omega^{L_n}_{(\pi_{i-1},\pi_i)}=A\Big\}
+ \beta\, 1\left\{\omega_i=\Omega^{L_n}_{(\pi_{i-1},\pi_i)}=B\right\}\Big),
\ee
where $\Omega^{L_n}_{(\pi_{i-1},\pi_i)}$ denotes the label of the block the step 
$(\pi_{i-1},\pi_i)$ lies in. What this Hamiltonian does is count the number of 
$AA$-matches and $BB$-matches and assign them energy $\alpha$ and $\beta$, respectively, 
where $\alpha,\beta\in\R$. (Note that the interaction is assigned to bonds rather than 
to sites, and that we do not follow the convention of putting a minus sign in front of 
the Hamiltonian.) Similarly to what was done in our earlier papers \cite{dHW06}, 
\cite{dHP07b}, \cite{dHP07c}, \cite{dHP07a}, without loss of generality we may 
restrict the interaction parameters to the cone
\be{defcone}
\CONE = \{(\alpha,\beta)\in\R^2\colon\,\alpha\geq |\beta|\}.
\ee
For $n\in \N$, the free energy per monomer is defined as
\be{partfunc}
f_{n}^{\omega,\Omega}(M;\alpha,\beta)
=\tfrac{1}{n}\log Z_{n,L_n}^{\omega,\Omega}(M;\alpha,\beta) 
\quad \text{with}\quad Z_{n,L_n}^{\omega,\Omega}(M)
=\sum_{\pi \in \cW_{n,M}} e^{H_{n,L_n}^{\omega,\Omega}(\pi)},
\ee
and in the limit as $n\to\infty$ the free energy per monomer is given by
\be{felimdef}
f(M;\alpha,\beta) = \lim_{n\to \infty}f_{n,L_n}^{\omega,\Omega}(M;\alpha,\beta), 
\ee
provided this limit exists. 

Henceforth, we subtract from the Hamiltonian the quantity $\alpha \sum_{i=1}^n 
1\left\{\omega_i=A\right\}$, which by the law of large numbers is $\tfrac{\alpha}{2} 
n (1+o(1))$ as $n\to \infty$ and corresponds to a shift of $-\tfrac{\alpha}{2}$ in 
the free energy. The latter transformation allows us to lighten the notation, 
starting with the Hamiltonian, which becomes
\be{Hamiltonianalt}
H_{n,L_n}^{\omega,\Omega}(\pi)
= \sum_{i=1}^n \Big(\beta\, 1\left\{\omega_i=B\right\}
-\alpha\,1\left\{\omega_i=A\right\}\Big)\,  
1\left\{\Omega^{L_n}_{(\pi_{i-1},\pi_i)}=B\right\}.
\ee


\subsection{Variational formula for the quenched free energy}
\label{varfor}

Theorem~\ref{varformula} below is the main result of our paper. It expresses the 
quenched free energy per monomer in the form of a \emph{variational formula}. To 
state this variational formula, we need to define some quantities that capture 
the way in which the copolymer moves inside single columns of blocks and samples 
different columns. A precise definition of these quantities will be given in 
Section~\ref{keyingr}. 

Given $M\in \N$, the \emph{type} of a column is denoted by $\Theta$ and takes values 
in a \emph{type space} $\overline\cV_M$, defined in Section~\ref{frenp1}. The type 
indicates both the vertical displacement of the copolymer in the column and the 
mesoscopic disorder seen relative to the block where the copolymer enters the column. 
In Section~\ref{frenp1} we further associate with each $\Theta\in \overline\cV_M$ a 
quantity $u_\Theta \in [t_\Theta,\infty)$ that indicates how many \emph{steps on scale} 
$L_n$ the copolymer makes in columns of type $\Theta$, where $t_\Theta$ is the minimal 
number of steps required to cross a column of type $\Theta$. These numbers are gathered 
into the set 
\be{BVdef}
\cB_{\overline\cV_M}=\big\{(u_\Theta)_{\Theta\in \overline\cV_M}\in\R^{\overline\cV_M}
\colon\,u_\Theta\geq t_\Theta\,\,\forall\,\Theta \in \overline\cV_M,\,
\Theta \mapsto u_\Theta \mbox{ continuous}\big\}.
\ee 
In Section~\ref{newsec} we introduce the \emph{free energy per step} $\psi(\Theta,
u_\Theta;\alpha,\beta)$ associated with the copolymer when crossing a column of 
type $\Theta$ in $u_\Theta$ steps, which depends on the parameters $\alpha,\beta$. 
After that it remains to define the family of \emph{frequencies with which successive
pairs of different types of columns can be visited by the copolymer}. This is done in 
Section \ref{Percofreq} and is given by a family of probability laws $\rho$ in 
$\cM_1(\overline\cV_M)$, the set of probability measures 
on $\overline\cV_M$, forming a set 
\be{Rpdef}
\cR_{p,M} \subset \cM_1(\overline\cV_M),
\ee
which depends on $M$ and on the parameter $p$.

\begin{theorem}
\label{varformula}
For every $(\alpha,\beta)\in\CONE$, $M\in \N$ and $p \in (0,1)$ the free energy in 
{\rm \eqref{felimdef}} exists for $\P$-a.e.\ $(\omega,\Omega)$ and in 
$L^1(\P)$, and is given by
\be{genevar}
f(M;\,\alpha,\beta) =\sup_{\rho\in \cR_{p,M}}\,\sup_{(u_\Theta)_{\Theta\in \overline\cV_M}\,
\in\,\cB_{\,\overline\cV_M}} V(\rho,u)
\ee
with 
\be{genevar2}
V(\rho,u) = \frac{\int_{\overline\cV_M}\,u_\Theta\,\psi(\Theta,u_{\Theta};\alpha,\beta)\,
\rho(d\Theta)}{\int_{\overline\cV_M}\,u_\Theta\, \rho(d\Theta)}\quad \text{if}\quad 
\int_{\overline\cV_M}\,u_\Theta\, \rho(d\Theta)=\infty,
\ee
and $V(\rho,u)=-\infty$ otherwise.
\end{theorem}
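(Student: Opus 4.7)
The plan is to use the standard coarse-graining strategy made possible by the separation of scales in \eqref{speed}. Since $L_n\to\infty$ while $L_n/n\to 0$, a typical path $\pi\in\cW_{n,M}$ traverses order $n/L_n$ columns of width $L_n$ and spends order $L_n$ monomer steps in each. I would therefore decompose $\pi$ into the concatenation of its crossings of successive columns $\cC_{j,L_n}$, classify each crossing by its type $\Theta_j\in\overline\cV_M$ (vertical displacement $v_j(\pi)-v_{j-1}(\pi)$ together with the mesoscopic disorder seen from the entry block), and record the number $u_{\Theta_j}L_n$ of microscopic steps spent in that column. The empirical distribution of the $\Theta_j$'s will play the role of $\rho$, and the profile $\Theta\mapsto u_\Theta$ the role of the time allocation.

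With this decomposition the Hamiltonian in \eqref{Hamiltonianalt} splits additively over columns and, using the single-column free energy $\psi$ from Section~\ref{newsec}, each column's log-contribution to $Z_{n,L_n}^{\omega,\Omega}(M)$ is close to $u_{\Theta_j}L_n\,\psi(\Theta_j,u_{\Theta_j};\alpha,\beta)$ up to an error $o(L_n)$ that is uniform on a suitably discretized type space. Dividing by $n=\sum_j u_{\Theta_j}L_n$ then yields
\[
\tfrac{1}{n}\log Z_{n,L_n}^{\omega,\Omega}(M;\alpha,\beta)\;\approx\;
\frac{\sum_j u_{\Theta_j}\,\psi(\Theta_j,u_{\Theta_j};\alpha,\beta)}{\sum_j u_{\Theta_j}}\;=\;V(\rho_N,u),
\]
so that everything reduces to identifying the set of limiting pairs $(\rho,u)\in\cM_1(\overline\cV_M)\times\cB_{\overline\cV_M}$ that are attainable as $n\to\infty$.

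For the upper bound I would stratify $\cW_{n,M}$ according to its coarse-grained profile $(\rho_N,u)$, living in an $\gep$-net of $\cM_1(\overline\cV_M)\times\cB_{\overline\cV_M}$, apply the column-wise propositions of Section~\ref{keyingr} to each column, and finish with a union bound over the only polynomially many profiles, obtaining $f_n^{\omega,\Omega}\le\sup V+o(1)$. For the lower bound I would, given an arbitrary admissible pair $(\rho,u)$ with $\int u_\Theta\,\rho(d\Theta)<\infty$, build a coarse-grained self-avoiding trajectory that visits the column types with the prescribed frequency $\rho$---this is precisely what membership of $\rho$ in $\cR_{p,M}$ guarantees on $\P$-a.e.\ realization of the mesoscopic disorder $\Omega$---and then, inside each visited column, use near-optimal microscopic strategies delivering energy close to $u_\Theta L_n\,\psi(\Theta,u_\Theta;\alpha,\beta)$. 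Existence of the $\P$-a.s.\ and $L^1(\P)$ limit and its deterministic value would follow from a double concentration-of-measure argument: conditionally on $\Omega$, the map $\omega\mapsto f_n^{\omega,\Omega}(M;\alpha,\beta)$ has bounded differences in each coordinate and hence concentrates around $\E_\omega f_n^{\omega,\Omega}$; the residual $\Omega$-randomness is then controlled on the block scale, where the relevant number of blocks $n/L_n\to\infty$.

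I expect two main obstacles. First, the boundary effects between consecutive columns are genuinely non-trivial: the exit block of column $j-1$ fixes the entry point, and thus partly the type, of column $j$, so one cannot treat $(\Theta_j)_j$ as independent; this forces $\cR_{p,M}$ to encode a constrained joint law rather than a product measure, and the reduction from the empirical pair-frequency of $(\Theta_j,\Theta_{j+1})$ to the marginal $\rho$ requires a careful sub-additivity/tightness argument together with the compactness of $\overline\cV_M$. Second, passing from the discrete profiles $(\rho_N,u)$ to continuous limits requires joint continuity of $(\Theta,u)\mapsto u\,\psi(\Theta,u;\alpha,\beta)$, as well as the uniform $o(L_n)$ control on the single-column log-partition that replaces it by $u_\Theta L_n\,\psi(\Theta,u_\Theta)$---precisely the properties I would invoke from the propositions established in Section~\ref{keyingr}.
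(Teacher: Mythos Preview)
Your high-level strategy matches the paper's: decompose by columns, replace each column's log-contribution by $u_\Theta L_n\,\psi(\Theta,u_\Theta)$, optimize over empirical type frequencies and time allocations, and use concentration to pass from quenched to averaged. However, there are two technical points where your sketch is thinner than it needs to be, and one where you are worrying too much.

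First, the uniform replacement of column partition functions by $e^{u_\Theta L_n\psi(\Theta,u_\Theta)}$ holds only after truncating the number of steps per column to at most $mL_n$: Proposition~\ref{convfree1} gives uniformity on $\cV_M^{*,m}$, not on all of $\cV_M^{*}$. The paper therefore first proves the variational formula under this truncation (Proposition~\ref{pr:formimp}), and then removes the truncation in a separate step (Proposition~\ref{pr:formimpp}) by showing that columns with more than $mL_n$ steps carry negligible entropy and energy; this uses the entropy bound of Lemma~\ref{convularge} together with the large-deviation estimate of Lemma~\ref{lele}, and is \emph{not} a by-product of the column decomposition. Your ``union bound over polynomially many profiles'' and your ``uniform $o(L_n)$ control'' both tacitly assume such a truncation; without it neither holds, because a single column can absorb order $n$ steps.

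Second, concentration in $(\omega,\Omega)$ shows that $f_n^{\omega,\Omega}$ is close to its mean $f_n$, but does not by itself prove that $f_n$ converges. The paper establishes convergence (Step~5 of Section~\ref{s23alt}) by a concatenation argument: take a near-optimal coarse-grained strategy along a subsequence realizing the $\limsup$, tile it (using stationarity of $\Omega$ on the column scale and a further concentration in $\Omega$), and conclude that every other subsequence achieves at least the same value. This is what replaces the sub-additivity you allude to, which is not directly available because the $\Omega$-disorder is not shift-invariant along a coarse-grained path.

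Finally, your first ``obstacle'' is less severe than you fear: the entry and exit heights $b_0,b_1$ are already coordinates of the type $\Theta$ (see \eqref{sset2}), so the compatibility constraint $b_{1,j}=b_{0,j+1}$ is built into the definition of the admissible set $\cR_{p,M}$ (Definition~\ref{RMNdef}). No pair-empirical measure is needed; the marginal $\rho$ suffices.
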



\subsection{Discussion}
\label{discus}

{\bf Structure of the variational formula.}
The variational formula in \eqref{genevar} has a simple structure: each column
type $\Theta$ has its own number of monomers $u_\Theta$ and its own free energy 
per monomer $\psi(\Theta,u_\Theta;\alpha,\beta)$ (both on the mesoscopic scale), 
and the total free energy per monomer is obtained by weighting each column type 
with the frequency $\rho_1(d\Theta)$ at which it is visited by the copolymer. 
The numerator is the total free energy, the denominator is the total number of 
monomers (both on the mesoscopic scale). The variational formula optimizes over 
$(u_\Theta)_{\Theta\in\overline\cV_M} \in\cB_{\,\overline \cV_M}$ and $\rho\in \cR_{p,M}$. 
The reason why these two suprema appear in \eqref{genevar} is that, as a consequence 
of assumption \eqref{speed}, the \emph{mesoscopic scale carries no entropy}: all 
the entropy comes from the microscopic scale, through the free energy per monomer 
in single columns.   

In Section~\ref{keyingr} we will see that $\psi(\Theta,u_\Theta;\alpha,\beta)$
in turn is given by a variational formula that involves the entropy of the 
copolymer inside a single column (for which an explicit expression is available) 
and the quenched free energy per monomer of a copolymer near a \emph{single linear 
interface} (for which there is an abundant literature). Consequently, the free 
energy of our model with a \emph{random geometry} is directly linked to the free 
energy of a model with a \emph{non-random geometry}. This will be crucial for our 
analysis of the free energy in the sequel paper. 

\medskip\noindent
{\bf Removal of the corner restriction.} In our earlier papers \cite{dHW06}, \cite{dHP07b}, 
\cite{dHP07c}, \cite{dHP07a}, we allowed the configurations of the copolymer to be given 
by the subset of $\cW_n$ consisting of those paths that enter pairs of blocks through a 
common corner, exit them at one of the two corners diagonally opposite and in between 
stay confined to the two blocks that are seen upon entering. The latter is an 
{\it unphysical restriction} that was adopted to simplify the model. In these 
papers we derived a variational formula for the free energy per step that had a 
simpler structure. We analyzed this variational formula as a function of $\alpha,
\beta,p$ and found that there are two regimes, \emph{supercritical} and \emph{subcritical}, 
depending on whether the oil blocks percolate or not along the coarse-grained self-avoiding 
path. In the supercritical regime the phase diagram turned out to have two phases, 
in the subcritical regime it turned out to have four phases, meeting at two tricritical 
points. 

In a sequel paper we will show that the phase diagrams found in the restricted model 
are largely \emph{robust} against the removal of the corner restriction, despite the 
fact that the variational formula is more complicated. In particular, there are again
two types of phases: \emph{localized phases} (where the copolymer spends a positive 
fraction of its time near the $AB$-interfaces) and \emph{delocalized phases} (where 
it spends a zero fraction near the $AB$-interfaces). Which of these phases occurs 
depends on the parameters $\alpha,\beta,p$. It is energetically favorable for the 
copolymer to stay close to the $AB$-interfaces, where it has the possibility of 
placing more than half of its monomers in their preferred solvent (by switching 
sides when necessary), but this comes with a loss of entropy. The competition 
between energy and entropy is controlled by the energy parameters $\alpha,\beta$ 
(determining the reward of switching sides) and by the density parameter $p$ 
(determining the density of the $AB$-interfaces).
 
\begin{figure}[htbp]
\vspace{.3cm}
\begin{center}
\includegraphics[width=.3\textwidth]{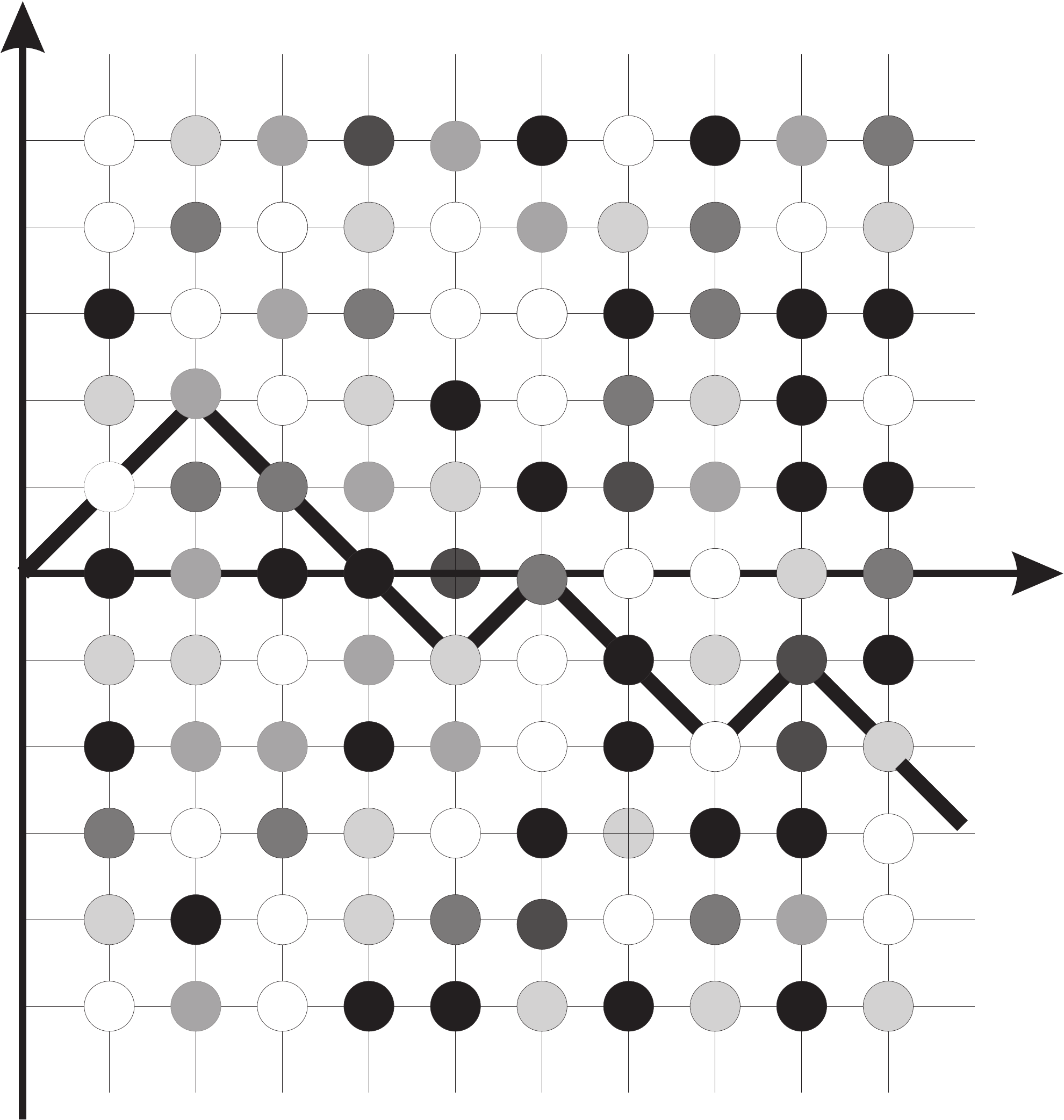}
\caption{Picture of a directed polymer with bulk disorder. The different shades
of black, grey and white represent different values of the disorder.} 
\label{fig-dirpolbulk}
\end{center}
\end{figure} 
\vspace{-.3cm}
  
\medskip\noindent 
{\bf Comparison with the directed polymer with bulk disorder.}
A model of a polymer with disorder that has been studied intensively in the literature 
is the \emph{directed polymer with bulk disorder}. Here, the set of paths is 
\be{dpre1}
\cW_n = \big\{\pi=(i,\pi_i)_{i=0}^n \in (\N_0\times\Z^d)^{n+1}\colon\,\pi_0=0,\,
\|\pi_{i+1}-\pi_i\|=1\,\,\forall\,0 \leq i<n\big\},
\ee
where $\|\cdot\|$ is the Euclidean norm on $\Z^d$, and the Hamiltonian is  
\be{dpre2}
H^\omega_n(\pi) = \lambda \sum_{i=1}^n \omega(i,\pi_i),
\ee
where $\lambda>0$ is a parameter and $\omega = \{\omega(i,x)\colon\,i\in\N,\,x\in\Z^d\}$ 
is a field of i.i.d.\ $\R$-valued random variables with zero mean, unit variance and
finite moment generating function, where $\N$ is time and $\Z^d$ is space (see 
Fig.~\ref{fig-dirpolbulk}). This model can be viewed as a version of a copolymer 
in a micro-emulsion where the droplets are of the \emph{same} size as the monomers. 
For this model \emph{no variational formula is known for the free energy}, and the 
analysis relies on the application of martingale techniques (for details, see e.g.\ 
den Hollander~\cite{dH}, Chapter 12).

In our model (which is restricted to $d=1$ and has self-avoiding paths that may move 
north, south and east instead of north-east and south-east), the droplets are much larger 
than the monomers. This causes a \emph{self-averaging of the microscopic disorder}, 
both when the copolymer moves inside one of the solvents and when it moves near an 
interface. Moreover, since the copolymer is much larger than the droplets, also 
\emph{self-averaging of the mesoscopic disorder} occurs. This is why the free energy 
can be expressed in terms of a variational formula, as in Theorem~\ref{varformula}.
In the sequel paper we will see that this variational formula acts as a \emph{jumpboard} 
for a detailed analysis of the phase diagram. Such a detailed analysis is lacking for 
the directed polymer with bulk disorder.

The directed polymer in random environment has two phases: a \emph{weak disorder
phase} (where the quenched and the annealed free energy are asymptotically comparable) 
and a \emph{strong disorder phase} (where the quenched free energy is asymptotically 
smaller than the annealed free energy). The strong disorder phase occurs in dimension 
$d=1,2$ for all $\lambda>0$ and in dimension $d \geq 3$ for $\lambda>\lambda_c$, with
$\lambda_c \in [0,\infty]$ a critical value that depends on $d$ and on the law of the 
disorder. It is predicted that in the strong disorder phase the copolymer moves within 
a narrow corridor that carries sites with high energy (recall our convention of not
putting a minus sign in front of the Hamiltonian), resulting in \emph{superdiffusive} 
behavior in the spatial direction. We expect a similar behavior to occur in the localized 
phases of our model, where the polymer targets the $AB$-interfaces. It would be interesting 
to find out how far the coarsed-grained path in our model travels vertically as 
a function of $n$.


\section{Key ingredients of the variational formula}
\label{keyingr}

In this section we give a precise definition of the various ingredients in 
Theorem~\ref{varformula}. In Section~\ref{lininter} we define the entropy 
of the copolymer inside a single column (Proposition~\ref{lementr}) and the 
quenched free energy per monomer for a random copolymer near a \emph{single 
linear interface} (Proposition~\ref{l:feinflim}), which serve as the key 
\emph{microscopic} ingredients. In Section~\ref{freeenp} these quantities 
are used to derive variational formulas for the quenched free energy per 
monomer in a single column (Proposition~\ref{convfree1}). These variational 
formulas come in two varieties (Propositions~\ref{energ} and \ref{energ1}). 
In Section~\ref{Percofreq} we define certain percolation frequencies 
describing how the copolymer samples the droplets in the emulsion 
(Proposition~\ref{properc}), which serve as the key \emph{mesoscopic} 
ingredients. Propositions~\ref{convfree1}--\ref{energ1} will be proved in
Section~\ref{proofprop1}. The results in Sections~\ref{freeenp}--\ref{Percofreq} 
will be used in Section~\ref{proofofgene} to prove our variational formula 
in Theorem~\ref{varformula} for the copolymer in the emulsion, which is our 
main \emph{macroscopic} object of interest.


\subsection{Path entropies and free energy along a single linear interface}
\label{lininter}

{\bf Path entropies.} 
We begin by defining the entropy of a path crossing a single column. Let 
\begin{align}
\label{add4}
\nonumber 
\cH &= \{(u,l)\in [0,\infty) \times \R \colon\, u\geq 1+|l|\},\\
\cH_L &= \big\{(u,l)\in \cH \colon\,l \in \tfrac{\Z}{L},\, 
u\in 1+|l|+\tfrac{2\N}{L}\big\}, \qquad  L \in \N,
\end{align}
and note that $\cH\cap \mathbb{Q}^2=\cup_{L\in \N} \cH_L$. For $(u,l) \in \cH_L$, 
denote by $\cW_L(u,l)$ (see Fig.~\ref{figtra}) the set containing those paths 
$\pi=(0,-1)+\widetilde \pi$ with $\widetilde\pi\in\cW_{uL}$ (recall \eqref{defw}) 
for which $\pi_{uL}= (L,lL)$. The entropy per step associated with the paths in 
$\cW_L(u,l)$ is given by
\be{ttrajblock}
\tilde{\kappa}_L(u,l)=\tfrac{1}{uL} \log |\cW_L(u,l)|.
\ee

\begin{figure}[htbp]
\vspace{-.5cm}
\begin{center}
\includegraphics[width=.44\textwidth]{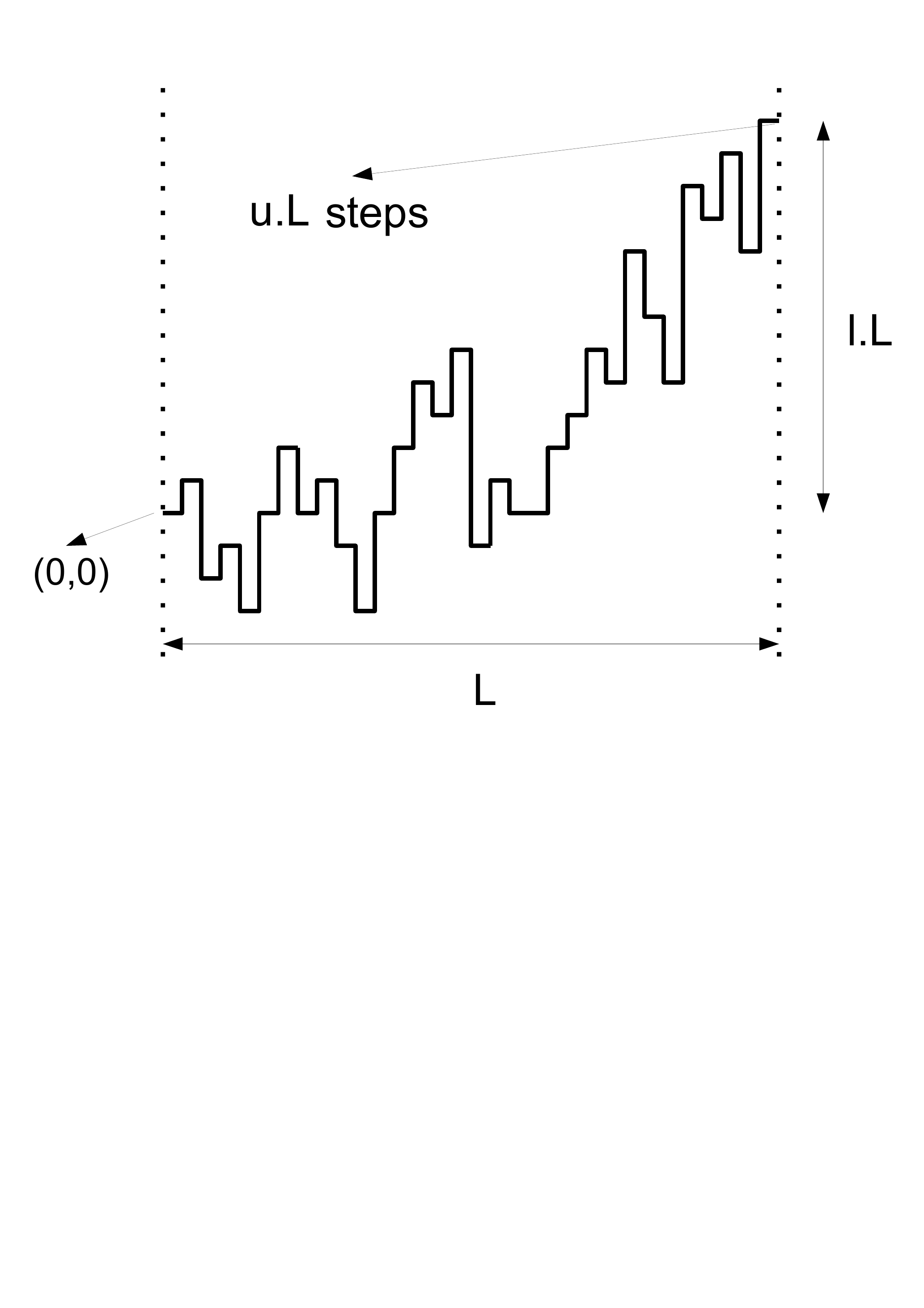}
\end{center}
\vspace{-4.7cm}
\caption{A trajectory in $\cW_L(u,l)$.}
\label{figtra}
\end{figure}

The following proposition will be proved in Appendix~\ref{Path entropies}. 

\bp{lementr}
For all $(u,l)\in\cH\cap\mathbb{Q}^2$ there exists a $\tilde{\kappa}(u,l) \in 
[0,\log 3]$ such that 
\be{conventr}
\lim_{ {L\to \infty} \atop {(u,l)\in \cH_L} } \tilde{\kappa}_L(u,l)
= \sup_{  {L\in \N} \atop {(u,l)\in \cH_L}} \tilde{\kappa}_L(u,l)
= \tilde{\kappa}(u,l). 
\ee  
\ep

\noindent
An explicit formula is available for $\tilde{\kappa}(u,l)$, namely, 
\be{kapexplform}
\tilde{\kappa}(u,l) = \left\{\begin{array}{ll}
\kappa(u/|l|,1/|l|), &\l \neq 0,\\
\hat{\kappa}(u), &l = 0,
\end{array}
\right.
\ee
where $\kappa(a,b)$, $a\geq 1+b$, $b\geq 0$, and $\hat{\kappa}(\mu)$, $\mu \geq 1$, 
are given in \cite{dHW06}, Section 2.1, in terms of elementary variational formulas
involving entropies (see \cite{dHW06}, proof of Lemmas 2.1.1--2.1.2).

\medskip\noindent 
{\bf Free energy along a single linear interface.} 
To analyze the free energy per monomer in a single column we need to first analyse the
free energy per monomer when the path moves in the vicinity of an $AB$-interface. To that
end we consider a \emph{single linear interface} $\cI$ separating a liquid $B$ in the 
lower halfplane from a liquid $A$ in the upper halfplane (including the interface 
itself).

For $L\in \N$ and $\mu\in 1+\frac{2\N}{L}$, let $\cW^\cI_L(\mu)=\cW_L(\mu,0)$ denote 
the set of $\mu L$-step directed self-avoiding paths starting at $(0,0)$ and ending at 
$(L,0)$. Define
\be{feinf}
\phi^{\omega,\cI}_L(\mu) = \frac{1}{\mu L} \log Z^{\omega,\cI}_{L,\mu} 
\quad \text{ and } \quad \phi^\cI_L(\mu)=\E[\phi^{\omega,\cI}_L(\mu)] ,
\ee
with
\be{Zinf}
\begin{aligned}
Z^{\omega,\cI}_{L,\mu}
&= \sum_{\pi\in\cW_L^\cI(\mu)} \exp\left[H^{\omega,\cI}_{L}(\pi)\right],\\
H^{\omega,\cI}_{L}(\pi)
&= \sum_{i=1}^{\mu L}\big(\beta\, 1\{\omega_i=B\}-\alpha\,
1\{\omega_i=A\}\big)\  1\{(\pi_{i-1},\pi_i) < 0\},
\end{aligned}
\ee
where $(\pi_{i-1},\pi_i) < 0$ means that the $i$-th step lies in the lower halfplane, 
strictly below the interface (see Fig.~\ref{fig7}).

The following proposition was derived in \cite{dHW06}, Section 2.2.2.

\bp{l:feinflim} 
For all $(\alpha,\beta)\in\CONE$ and $\mu \in \mathbb{Q}\cap[1,\infty)$ there exists 
a $\phi^\cI(\mu)=\phi^\cI(\mu;\alpha,\beta)$ $\in\R$ such that
\be{fesainf}
\lim_{ {L\to \infty} \atop {\mu\in 1+\frac{2\N}{L}} } 
\phi^{\omega,\cI}_L(\mu) = \phi^\cI(\mu) = \phi^\cI(\mu;\alpha,\beta)
\quad \text{for $\P$-a.e. $\omega$ and in $L^1(\P)$.}
\ee
\ep
It is easy to check (via concatenation of trajectories) that $\mu \mapsto \mu 
\phi^{\cI}(\mu;\alpha,\beta)$ is concave. For technical reasons we need to assume 
that it is {\it strictly concave}, a property which we believe to be true but are 
unable to verify:

\begin{assumption}
\label{assu}
For all $(\alpha,\beta)\in\CONE$ the function $\mu\mapsto\mu \phi^{\cI}(\mu;\alpha,\beta)$ 
is strictly concave on $[1,\infty)$.
\end{assumption}

\begin{figure}[htbp]
\vspace{-1cm}
\begin{center}
\includegraphics[width=.58\textwidth]{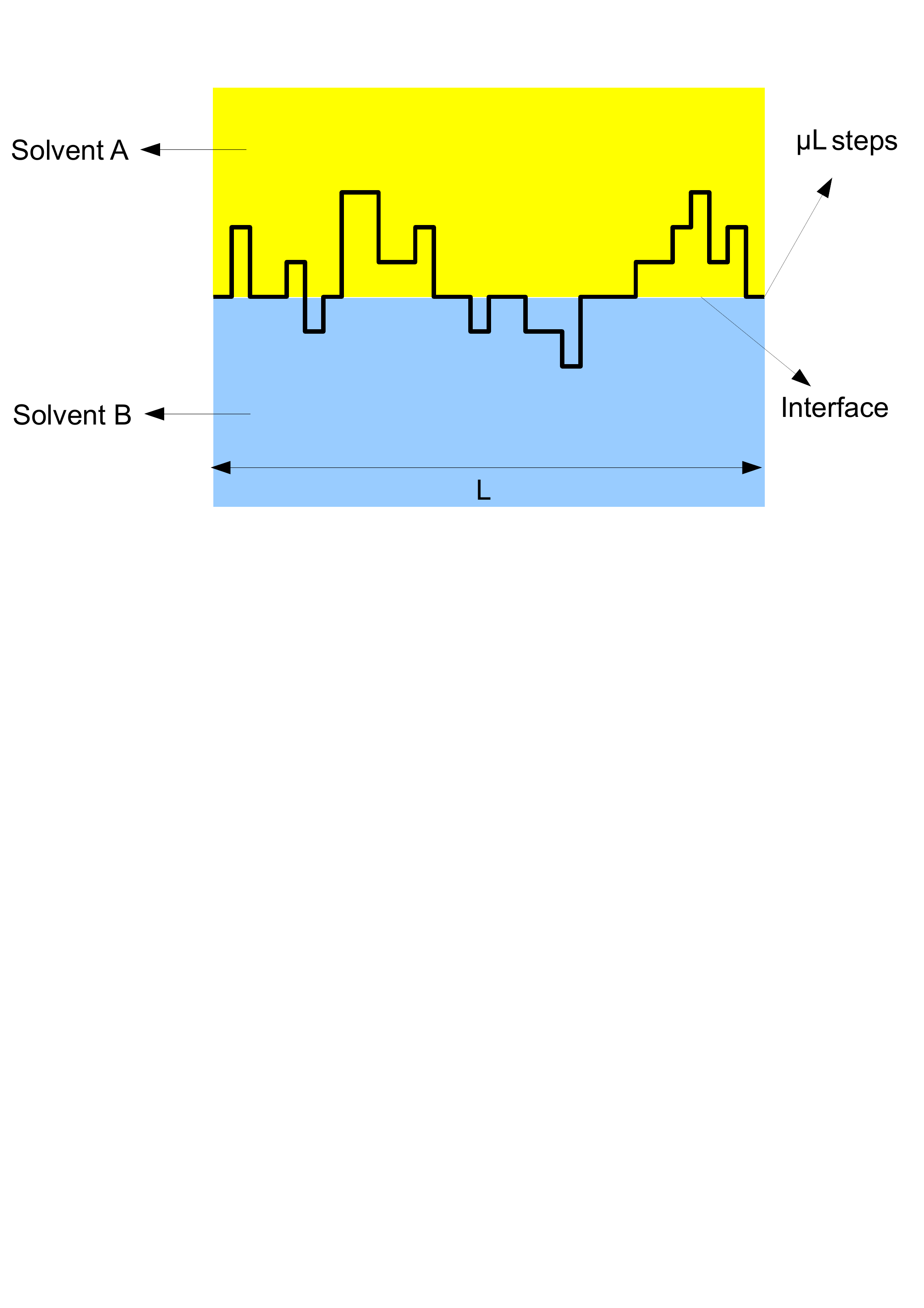}
\end{center}
\vspace{-8cm}
\caption{Copolymer near a single linear interface.}
\label{fig7}
\end{figure}

\subsection{Free energy in a single column and variational formulas}
\label{freeenp}

In this section we use Propositions~\ref{lementr}--\ref{l:feinflim} to derive
a variational formula for the free energy per step in a single column 
(Proposition~\ref{convfree1}). The variational formula comes in three varieties 
(Propositions~\ref{energ} and \ref{energ1}), depending on \emph{whether there 
is or is not an $AB$-interface between the heights where the copolymer enters 
and exits the column, and in the latter case whether an $AB$-interface is reached 
or not}. 

In what follows we need to consider the randomness in a single column. To that 
aim, we recall \eqref{blockcol}, we pick $L\in \N$ and once $\Omega$ is chosen, 
we can record the randomness of $C_{j,L}$ as
\be{add1}
\Omega_{(j,~\cdot~)}=\{\Omega_{(j,l)}\colon\, l\in \Z\}.
\ee
We will also need to consider the randomness of the $j$-th column seen by a 
trajectory that enters $\cC_{j,L}$ through the block $\Lambda_{j,k}$ with 
$k\neq 0$ instead of $k=0$. In this case, the randomness of $\cC_{j,L}$ is 
recorded as
\be{add2}
\Omega_{(j,k+~\cdot~)}=\{\Omega_{(j,k+l)}\colon\, l\in \Z\}.
\ee

Pick $L\in \N$, $\chi\in \{A,B\}^\Z$ and consider $\cC_{0,L}$ endowed with the disorder 
$\chi$, i.e., $\Omega(0,\cdot)=\chi$.  Let $(n_i)_{i\in \Z}\in \Z^\Z$ be the successive 
heights of the $AB$-interfaces in $\cC_{0,L}$ divided by $L$, i.e.,  
\be{efn}
\dots<n_{-1}<n_0\leq 0< n_1<n_2<\dots.
\ee
and the $j$-th interface of $\cC_{0,L}$ 
is $\cI_j=\{0,\dots,L\}\times \{n_j L\}$ (see 
Fig.~\ref{fig5bis}). Next, for $r\in \N_0$ we set
\begin{align}\label{defchi1}
k_{r,\chi}=&\,0 \text{ if } n_1>r \text{ and } 
k_{r,\chi}=\max\{i\geq 1\colon n_i\leq r\} \text{ otherwise},
\end{align}
while for $r\in -\N$ we set
\begin{align}\label{defchi2}
\quad \quad \quad \quad  k_{r,\chi}=&\, 0 \text{ if }
n_0\leq r\text{ and } k_{r,\chi}
=\min\{i\leq 0\colon n_{i}\geq r+1\}-1 \text{ otherwise}.
\end{align}
Thus, $|k_{r,\chi}|$ is the number of $AB$-interfaces between heigths $1$ and $r L$ 
in $\cC_{0,L}$. 

\begin{figure}[htbp]
\vspace{-.5cm}
\begin{center}
\includegraphics[width=.38\textwidth]{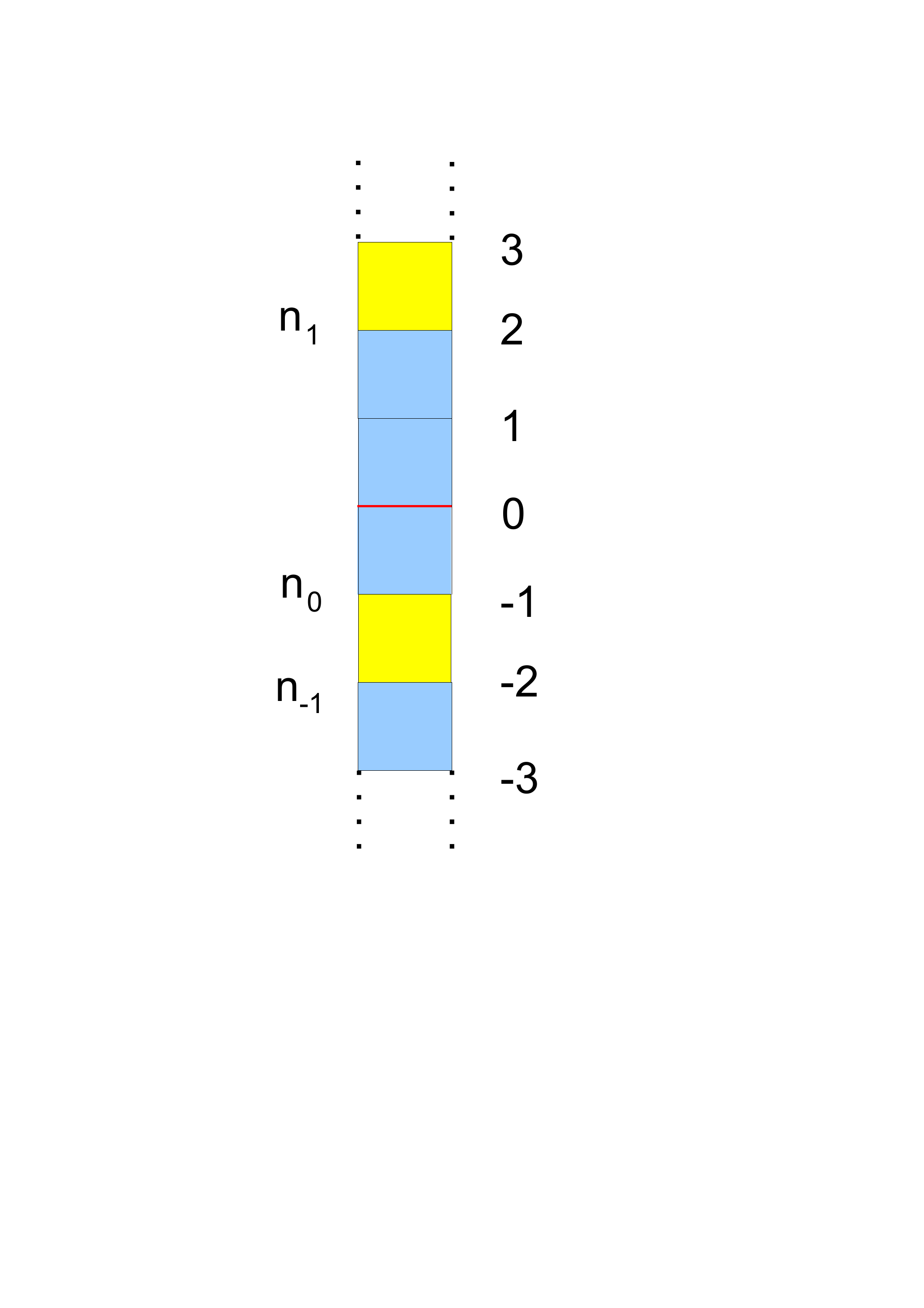}
\end{center}
\vspace{-3.5cm}
\caption{Example of a column with disorder $\chi=(\dots,\chi(-3),\chi(-2),\chi(-1),
\chi(0),\chi(1),\chi(2),$ $\dots)=(\dots,B,A,B,B,B,A,,\dots)$. In this example, for 
instance, $k_{-2,\chi}=-1$ and $k_{1,\chi}=0$.}
\label{fig5bis}
\end{figure}


\subsubsection{Free energy in a single column}
\label{frenp1}

{\bf Column crossing characteristics.} 
Pick $L,M\in \N$, and consider the first column $\cC_{0,L}$. The type of $\cC_{0,L}$ is 
determined by $\Theta=(\chi,\Xi,x)$, where $\chi=(\chi_j)_{j\in \Z}$ encodes the type 
of each block in $\cC_{0,L}$, i.e., $\chi_j=\Omega_{(0,j)}$ for $j\in \Z$, and $(\Xi,x)$ 
indicates which trajectories $\pi$ are taken into account. In the latter, $\Xi$ is 
given by $(\Delta \Pi,b_0,b_1)$ such that the vertical increment in $\cC_{0,L}$ on 
the block scale is $\Delta \Pi$ and satisfies $|\Delta \Pi|\leq M$ , i.e., $\pi$ 
enters $\cC_{0,L}$ at $(0,b_0 L)$ and exits $\cC_{0,L}$ at $(L,(\Delta \Pi+b_1)L)$. 
As in \eqref{defchi1} and \eqref{defchi2}, we set $k_\Theta=k_{\Delta\Pi,\chi}$ and 
we let $\cV_\AB$ be the set containing those $\Theta$ satisfying $k_\Theta\neq 0$. 
Thus, $\Theta\in \cV_\AB$ means that the trajectories crossing $\cC_{0,L}$ from 
$(0,b_0 L)$ to $(L,(\Delta \Pi+b_1)L)$ necessarily hit an $AB$-interface, and in 
this case we set $x=1$. If, on the other hand, $\Theta\in \cV_{\nAB}=\cV\setminus
\cV_{\AB}$, then we have $k_\Theta= 0$ and we set $x=1$ when the set of trajectories 
crossing $\cC_{0,L}$ from $(0,b_0 L)$ to $(L,(\Delta \Pi+b_1)L)$ is restricted to 
those that do not reach an $AB$-interface before exiting $\cC_{0,L}$, while we set 
$x=2$ when it is restricted to those trajectories that reach at least one $AB$-interface 
before exiting $\cC_{0,L}$. To fix the possible values taken by $\Theta=(\chi,\Xi,x)$ 
in a column of width $L$, we put $\cV_{L,M}=\cV_{\AB,L,M}\cup\cV_{\nAB,L,M}$ with 
\begin{align}
\label{set2}
\nonumber \cV_{\AB,L,M}&=\big\{(\chi,\Delta\Pi,b_0,b_1,x)\in 
\{A,B\}^\Z\times \Z\times \big\{\tfrac{1}{L},\tfrac{2}{L},\dots,1\big\}^2
\times \{1\}\colon\\
\nonumber 
&\hspace{6cm}\qquad \qquad\qquad\qquad 
|\Delta\Pi|\leq M,\,k_{\Delta\Pi,\chi}\neq 0\big\},\\
\nonumber \cV_{\nAB,L,M}&=\big\{(\chi,\Delta\Pi,b_0,b_1,x)\in 
\{A,B\}^\Z\times \Z\times\big\{\tfrac{1}{L},\tfrac{2}{L},\dots,1\big\}^2
\times \{1,2\}\colon\\
&\hspace{6cm}
\qquad \qquad\qquad\qquad |\Delta\Pi|\leq M,\,  k_{\Delta\Pi,\chi}= 0\big\}.
\end{align}
Thus, the set of all possible values of $\Theta$ is $\cV_M=\cup_{L\geq 1} \cV_{L,M}$, 
which we partition into $\cV_M=\cV_{\AB,M}\cup\cV_{\nAB,M}$ (see Fig.~\ref{fig6}) with
\begin{align}
\label{sset2}
\nonumber \cV_{\AB,M}
&=\cup_{L\in \N}\  \cV_{\AB,L,M}\\
\nonumber
&= \big\{(\chi,\Delta\Pi,b_0,b_1,x)\in \{A,B\}^\Z\times \Z\times 
(\mathbb{Q}_{(0,1]})^2\times\{1\}\colon\,\,
|\Delta\Pi|\leq M,\, k_{\Delta\Pi,\chi}\neq 0\big\},\\
\nonumber \cV_{\nAB,M}&=\cup_{L\in \N}\  \cV_{\nAB,L,M}\\
&= \big\{(\chi,\Delta\Pi,b_0,b_1,x)\in \{A,B\}^\Z\times \Z\times 
(\mathbb{Q}_{(0,1]})^2\times\{1,2\}\colon\,
|\Delta\Pi|\leq M,\, k_{\Delta\Pi,\chi}= 0\big\},
\end{align}
where, for all $I\subset \R$, we set $\mathbb{Q}_{I}=I\cap \mathbb{Q}$. We define the 
closure of $\cV_M$ as $\overline\cV_M=\overline\cV_{\AB,M}\cup\overline\cV_{\nAB,M}$ with
\begin{align}
\label{sset3}
\nonumber &\overline\cV_{\AB,M}
= \big\{(\chi,\Delta\Pi,b_0,b_1,x)\in \{A,B\}^\Z\times \Z\times 
[0,1]^2\times\{1\}\colon\,|\Delta\Pi|\leq M,\, k_{\Delta\Pi,\chi}\neq 0\big\},\\
&\overline\cV_{\nAB,M}=\big\{(\chi,\Delta\Pi,b_0,b_1,x)\in \{A,B\}^\Z\times\Z
\times [0,1]^2\times\{1,2\}\colon\,|\Delta\Pi|\leq M,\, k_{\Delta\Pi,\chi}= 0\big\}.
\end{align}

\begin{figure}[htbp]
\begin{center}
\includegraphics[width=.42\textwidth]{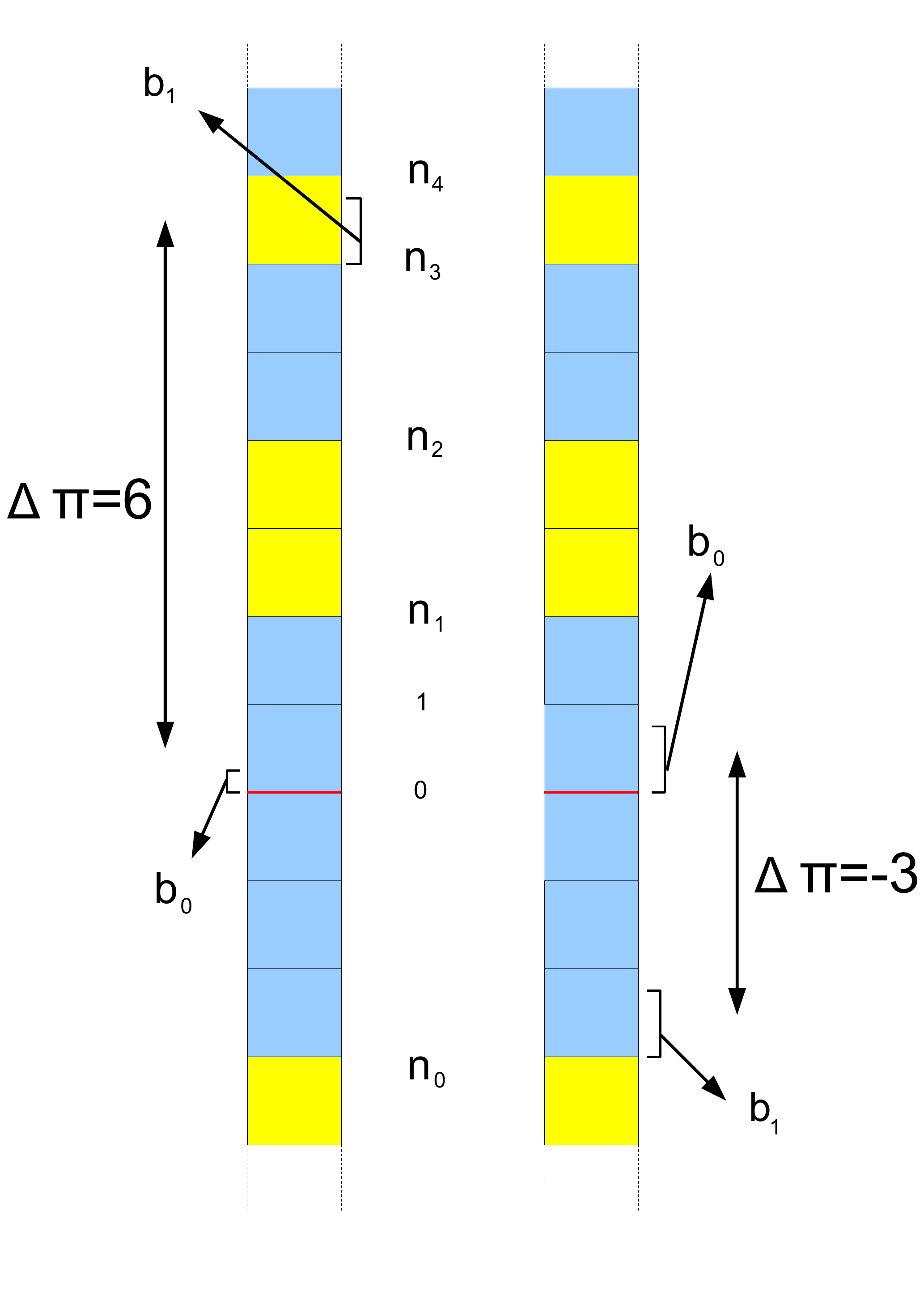}
\end{center}
\vspace{-1cm}
\caption{Labelling of coarse-grained paths and columns. On the left the type of the 
column is in $\cV_{\AB,M}$, on the right it is in $\cV_{\nAB,M}$ (with $M\geq 6$).}
\label{fig6}
\end{figure}

\medskip\noindent 
{\bf Time spent in columns.} 
We pick $L,M\in \N$, $\Theta=(\chi,\Delta\Pi,b_0,b_1,x)\in \cV_{L,M}$ and we specify the 
total number of steps that a trajectory crossing the column $\cC_{0,L}$ of type 
$\Theta$ is allowed to make. For $\Theta=(\chi,\Delta\Pi,b_0,b_1,1)$, set
\be{deft}
t_{\Theta}= 1+\mathrm{sign} (\Delta \Pi)\,
(\Delta \Pi+ b_1-b_0) \,\ind_{\{\Delta \Pi\neq 0\}} + |b_1-b_0|
\,\ind_{\{\Delta \Pi=0\}},
\ee
so that a trajectory $\pi$ crossing a column of width $L$ from $(0,b_0 L)$ to 
$(L,(\Delta \Pi+b_1)L)$ makes a total of $uL$ steps with $u\in t_{\Theta}
+\tfrac{2\N}{L}$.
For $\Theta=(\chi,\Delta\Pi,b_0,b_1,2)$ in turn, recall \eqref{efn} and let
\be{deftt}
t_{\Theta}= 1+\mathrm{min}\{2 n_1-b_0-b_1-\Delta \Pi, 2 |n_0| +b_0+b_1+\Delta \Pi\},
\ee
so that  a trajectory $\pi$ crossing a column of width $L$ and type $\Theta\in 
\cV_{\nAB,L,M}$ from $(0,b_0 L)$ to $(L,(\Delta \Pi+b_1)L)$ and reaching an $AB$-interface 
makes a total of $uL$ steps with $u\in t_{\Theta}+\tfrac{2\N}{L}$.

\begin{figure}[htbp]
\begin{center}
\includegraphics[width=.3\textwidth]{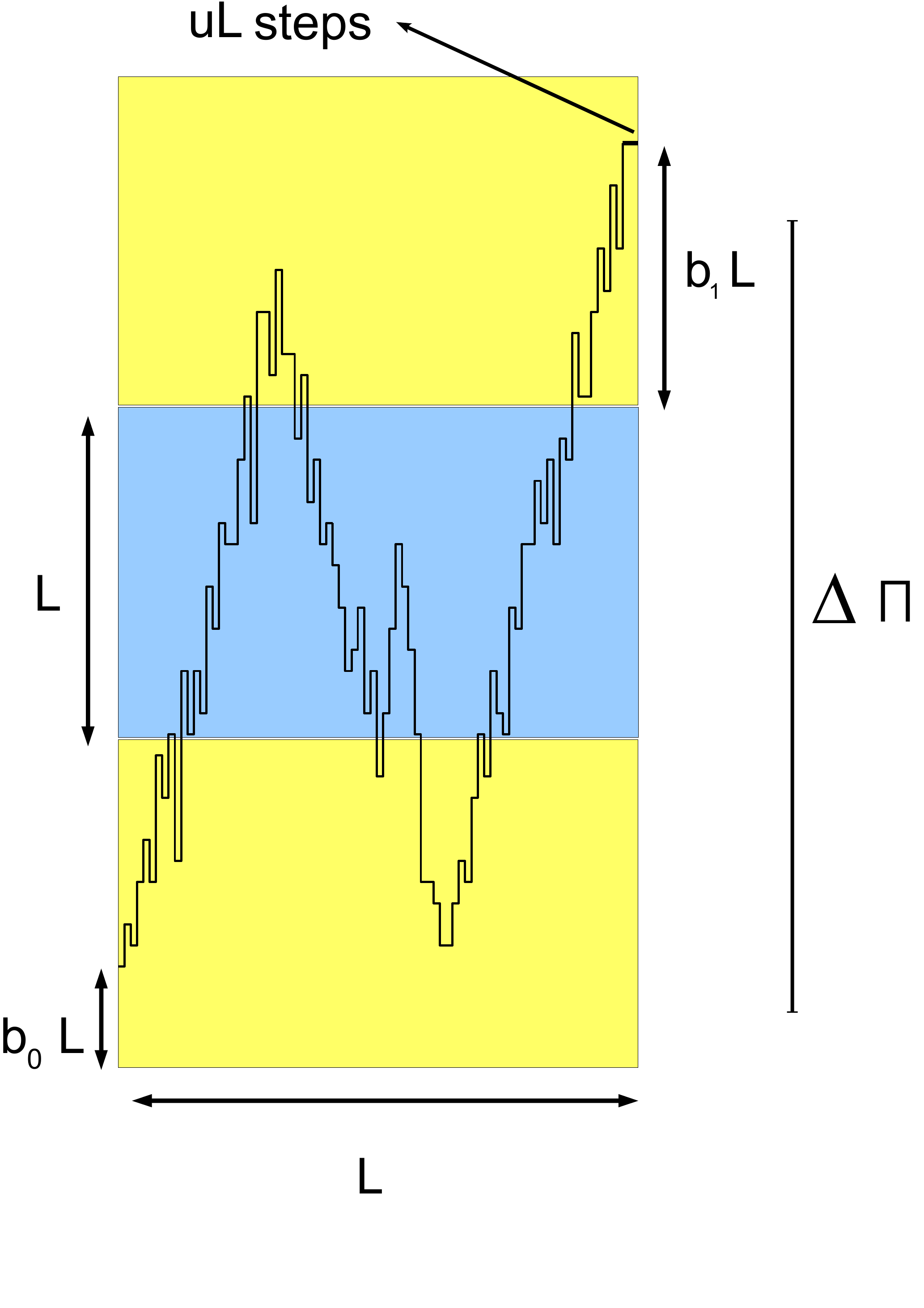}
\end{center}
\vspace{-1cm}
\caption{Example of a $uL$-step path inside a column of type $(\chi,\Delta\Pi,b_0,b_1,1)
\in \cV_{\AB,L}$ with disorder $\chi=(\dots,\chi(0),\chi(1),\chi(2),\dots)=(\dots,A,B,
A,\dots)$, vertical displacement $\Delta\Pi=2$, entrance height $b_0$ and exit height 
$b_1$.}
\label{fig5}
\end{figure}

\begin{figure}[htbp]
\begin{center}
\includegraphics[width=.3\textwidth]{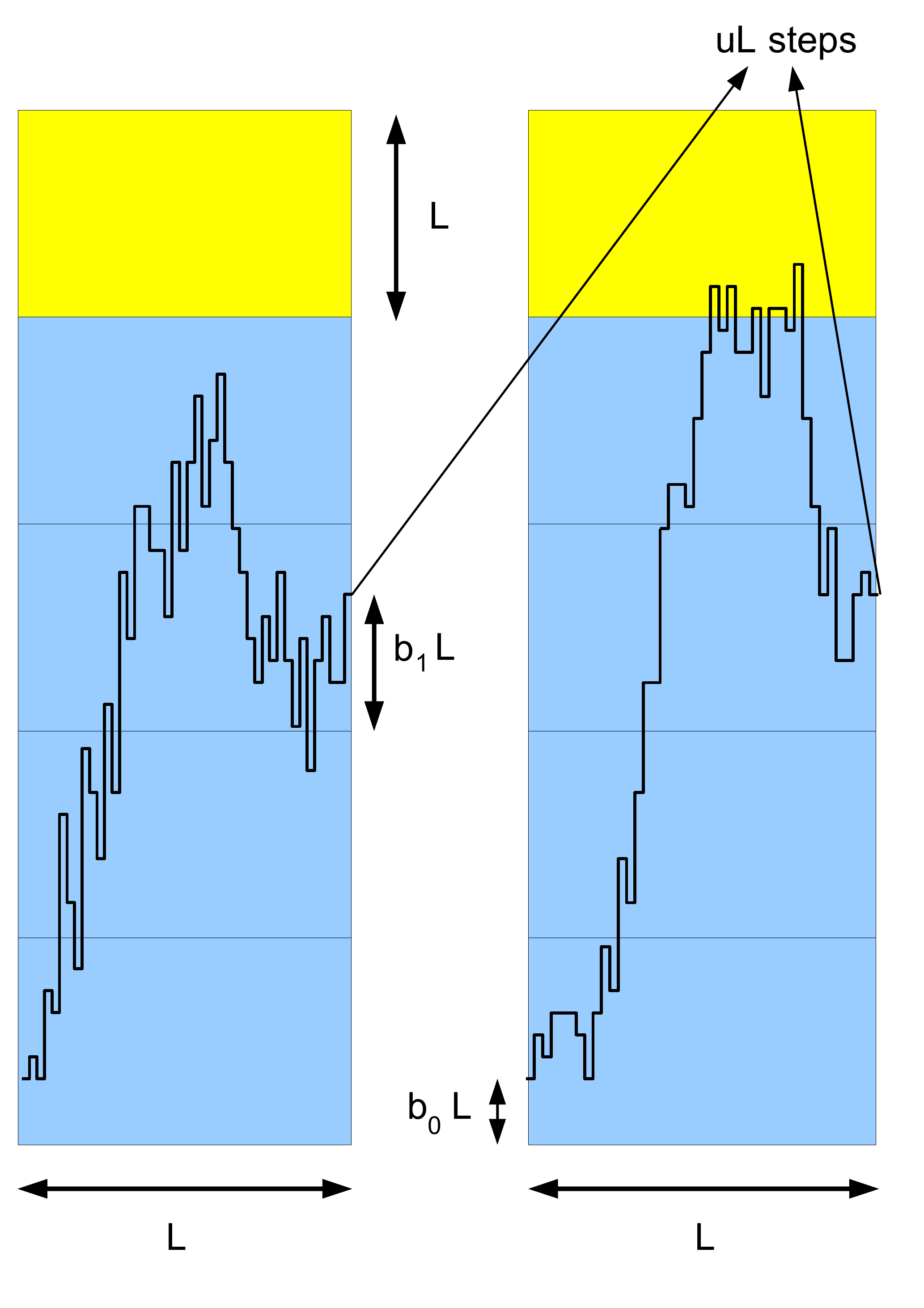}
\end{center}
\vspace{-1cm}
\caption{Two examples of a $uL$-step path inside a column of type $(\chi,\Delta\Pi,b_0,b_1,1)
\in \cV_{\nAB,L}$ (left picture) and $(\chi,\Delta\Pi,b_0,b_1,2)\in \cV_{\nAB,L}$ 
(right picture) with disorder $\chi=(\dots,\chi(0),\chi(1),\chi(2),\chi(3),\chi(4),
\dots)=(\dots,B,B,B,B,A,\dots)$, vertical displacement $\Delta\Pi=2$, entrance
height $b_0$ and exit height $b_1$.}
\label{block2}
\end{figure}

At this stage, we can fully determine the set $\cW_{\Theta,u,L}$ consisting of the 
$uL$-step trajectories $\pi$ that are considered in a column of width $L$ and type 
$\Theta$. To that end, for $\Theta\in \cV_{\AB,L,M}$ we map the trajectories $\pi\in
\cW_{L}(u,\Delta\Pi+b_1-b_0)$ onto $\cC_{0,L}$ such that $\pi$ enters $\cC_{0,L}$ 
at $(0,b_0 L)$ and exits $\cC_{0,L}$ at $(L,(\Delta\Pi+b_1)L)$ (see Fig.~\ref{fig5}), 
and for $\Theta\in\cV_{\nAB,L,M}$ we remove, dependencing on $x\in\{1,2\}$, those 
trajectories that reach or do not reach an $AB$-interface in the column (see 
Fig.~\ref{block2}). Thus, for  $\Theta\in \cV_{\AB,L,M}$ and $u\in t_{\Theta}+
\tfrac{2\N}{L}$, we let
\be{ww}
\cW_{\Theta,u,L}=\big\{\pi=(0,b_0 L)+\widetilde\pi\colon\,
\widetilde\pi\in \cW_{L}(u,\Delta\Pi+b_1-b_0) \big\},
\ee
and, for $\Theta\in \cV_{\nAB,L,M}$ and $u\in t_{\Theta}+\tfrac{2\N}{L}$,
\begin{align}
\label{ww2}
\nonumber \cW_{\Theta,u,L}
&=\big\{\pi\in(0,b_0 L)+ \cW_{L}(u,\Delta\Pi+b_1-b_0)\colon\,
\text{$\pi$ reaches no $AB$-interface} \big\}\ \text{if}\ x_\Theta=1,\\
\cW_{\Theta,u,L}&=\big\{\pi\in (0,b_0 L)+\cW_{L}(u,\Delta\Pi+b_1-b_0)\colon\, 
\text{$\pi$ reaches an $AB$-interface} \big\}\ \text{if}\  x_\Theta=2,
\end{align}
with $x_\Theta$ the last coordinate of $\Theta \in \cV_M$. Next, we set
\begin{align}
\label{setp}
\nonumber \cV_{L,M}^* &= \Big\{(\Theta,u)\in \cV_{L,M} \times [0,\infty)\colon\, 
u\in t_{\Theta}+\tfrac{2\N}{L}\Big\},\\
\nonumber \cV_M^* 
&=\big\{(\Theta,u)\in \cV_M \times \mathbb{Q}_{[1,\infty)}\colon\,
u\geq  t_{\Theta}\big\},\\
\overline \cV_M^{*}
&=\big\{(\Theta,u)\in \overline\cV_M\times [1,\infty) \colon\,
u\geq  t_{\Theta}\big\},
\end{align}
which we partition into $\cV^{*}_{\AB,L,M}\cup\cV^{*}_{\nAB,L,M}$, $\cV^*_{\AB,M}\cup
\cV^*_{\nAB,M}$ and $\overline\cV^{*}_{\AB,M}\cup\overline\cV^{*}_{\nAB,M}$. Note that 
for every $(\Theta,u)\in\cV^*_M$ there are infinitely many $L\in \N$ such that 
$(\Theta,u)\in \cV_{L,M}^*$, because $(\Theta,u)\in \cV_{qL,M}^*$ for all $q\in \N$ 
as soon as $(\Theta,u)\in \cV_{L,M}^*$.

\medskip\noindent 
{\bf Restriction on the number of steps per column.} 
In what follows, we set
\be{defhe}
\EIGH = \{(M,m)\in\N\times \N\colon\,m\geq M+2\},
\ee
and, for $(M,m)\in \EIGH$, we consider the situation where the number of steps 
$uL$ made by a trajectory $\pi$ in a column of width $L\in \N$ is bounded by $m L$. 
Thus, we restrict the set $\cV_{L,M}$ to the subset $\cV_{L,M}^{\,m}$ containing only 
those types of columns $\Theta$ that can be crossed in less than $m L$ steps, i.e., 
\be{}
\cV_{L,M}^{\,m}=\{\Theta\in \cV_{L,M}\colon\, t_\Theta\leq m\}.
\ee
Note that the latter restriction only conconcerns those $\Theta$ satisfying $x_\Theta=2$. 
When $x_\Theta=1$ a quick look at \eqref{deft} suffices to state that $t_\Theta\leq M+2
\leq m$. Thus, we set $\cV_{L,M}^{\,m}=\cV_{\AB,L,M}^{\,m}\cup\cV_{\nAB,L,M}^{\,m}$ with 
$\cV_{\AB,L,M}^{\,m}=\cV_{\AB,L,M}$ and with
\begin{align}
\label{set2alt1}
\nonumber \cV_{\nAB,L,M}^{\,m}
= \Big\{\Theta\in \{A,B\}^{\Z}\times \Z \times
\big\{\tfrac{1}{L},\tfrac{2}{L},\dots,1\big\}^2&\times \{1,2\}\colon\\
& |\Delta\Pi| \leq M,\  k_{\Theta}= 0\ \ \text{and}\  t_\Theta\leq m\Big\}.
\end{align}
The sets $\cV_{M}^{\,m}=\cV_{\AB,M}^{\,m}\cup\cV_{\nAB,M}^{\,m}$ and $\overline\cV_{M}^{\,m}
=\overline\cV_{\AB,M}^{\,m}\cup\overline\cV_{\nAB,M}^{\,m}$ are obtained by mimicking 
(\ref{sset2}--\ref{sset3}). In the same spirit, we restrict $\cV_{L,M}^{*}$ to 
\be{setpalt}
\cV_{L,M}^{*,\,m}=\{(\Theta,u)\in \cV_{L,M}^{*}\colon\, \Theta\in \cV_{L,M}^{\,m}, u\leq m\}
\ee 
and $\cV_{L,M}^*=\cV_{\AB,L,M}^*\cup \cV_{\nAB,L,M}^*$ with
\be{setpaltalt}
\begin{aligned}
\cV_{\AB,L,M}^{*,\,m} &= \Big\{(\Theta,u)&\in \cV_{\AB,L,M}^{\,m} \times [1,m]\colon\, 
u\in t_{\Theta}+\tfrac{2\N}{L}\Big\},\\
\cV_{\nAB,L,M}^{*\,m} &= \Big\{(\Theta,u)&\in \cV_{\nAB,L,M}^{\,m}\times [1,m]\colon\, 
u\in t_{\Theta}+\tfrac{2\N}{L} \Big\}.
\end{aligned}
\ee
We set also  $\cV_M^{*,\,m}=\cV^{*,\,m}_{\AB,M}\cup \cV^{*,\,m}_{\nAB,M}$ with 
$\cV_{\AB,M}^{*,\,m}=\cup_{L\in \N}\cV^{*,\,m}_{\AB, L,M}$ and $\cV^{*,\,m}_{\nAB,M}
=\cup_{L\in \N} \cV^{*,\,m}_{\nAB,L,M}$, and rewrite these as
\begin{align}
\label{set2alt2}
\nonumber \cV^{*,\,m}_{\AB,M} 
=\big\{(\Theta,u)&\in \cV_{\AB,M}^{\,m} \times \mathbb{Q}_{[1,m]}\colon\,
u\geq  t_{\Theta}\big\},\\
\cV_{\nAB,M}^{*,\,m}= \big\{(\Theta,u)&\in \cV_{\nAB,M}^{\,m} \times  \mathbb{Q}_{[1,m]}\colon\, 
u\geq t_{\Theta}\big\}.
\end{align}
We further set  $\overline{\cV}^{\,*}_M = \overline{\cV}^{\,*,\,m}_{\AB,M}\cup
\overline{\cV}^{\,*,\,m}_{\nAB,M}$ with 
\be{set2alt3}
\begin{aligned}
\overline\cV^{\,*,\,m}_{\AB,M} 
&=\big\{(\Theta,u)\in \overline\cV_{\AB,M}^{\,m} \times [1,m]\colon\,
u\geq  t_{\Theta}\big\},\\
\overline\cV_{\nAB,M}^{\,*,\,m} 
&= \Big\{(\Theta,u) \in \overline\cV_{\nAB,M}^{\,m} \times [1,m]\colon\, 
u\geq  t_{\Theta}\Big\}.
\end{aligned}
\ee

\medskip\noindent 
{\bf Existence and uniform convergence of free energy per column.} 
Recall \eqref{ww}, \eqref{ww2} and, for $L\in \N$, $\omega\in \{A,B\}^\N$ and $(\Theta,u)
\in\cV^{\,*}_{L,M}$, we associate with each $\pi \in \cW_{\Theta,u,L}$ the energy
\be{Hamiltonian1}
H_{uL,L}^{\omega,\chi}(\pi)
=  \sum_{i=1}^{uL} \big(\beta\, 1\left\{\omega_i=B\right\}
-\alpha\, 1\left\{\omega_i=A\right\}\big)\,
1\Big\{\chi^{L}_{(\pi_{i-1},\pi_i)}=B\Big\},
\ee
where $\chi^{L}_{(\pi_{i-1},\pi_i)}$ indicates the label of the block containing 
$(\pi_{i-1},\pi_i)$ in a column with disorder $\chi$ of width $L$. (Recall that
the disorder in the block is part of the type of the block.) The latter allows us 
to define the quenched free energy per monomer in a column of type $\Theta$ and size 
$L$ as
\be{partfunc2}
\psi^{\omega}_L(\Theta,u)
= \frac{1}{u L} \log Z^{\omega}_L(\Theta,u) 
\quad \text{with} \quad Z^{\omega}_L(\Theta,u)
=\sum_{\pi \in \cW_{\Theta,u,L}} e^{\,H_{uL,L}^{\omega,\chi}(\pi)}.
\ee
Abbreviate $\psi_L(\Theta,u)=\E[\psi^\omega_L(\Theta,u)]$, and note that for $M\in \N$,
$m\geq M+2$ and $(\Theta,u)\in \cV_{L,M}^{\,*,\,m}$ all $\pi\in \cW_{\Theta,u,L}$ 
necessarily remain in the blocks $\Lambda_L(0,i)$ with $i\in \{-m+1,\dots,m-1\}$. 
Consequently, the dependence on $\chi$ of $\psi^{\omega}_L(\Theta,u)$ is restricted 
to those coordinates of $\chi$ indexed by $\{-m+1,\dots,m-1\}$. The following proposition 
will be proven in Section \ref{proofprop1}.

\begin{proposition}
\label{convfree1}
For every $M\in\N$ and $(\Theta,u)\in\cV_M^*$ there exists a $\psi(\Theta,u)\in\R$ such that 
\be{conve}
\lim_{ {L\to \infty} \atop {(\Theta,u)\in\cV_{L,M}^*} } \psi^{\omega}_L(\Theta,u)
= \psi(\Theta,u) = \psi(\Theta,u;\alpha,\beta) \quad \omega-a.s.
\ee
Moreover, for every $(M,m)\in \EIGH$ the convergence is uniform in $(\Theta,u)\in
\cV^{*,\,m}_M$.
\ep

\medskip\noindent
{\bf Uniform bound on the free energies.}
Pick $(\alpha,\beta)\in \CONE$, $n\in \N$, $\omega\in \{A,B\}^\N$, $\Omega\in
\{A,B\}^{\N_0\times\Z}$, and let $\bar\cW_n$ be any non-empty subset of $\cW_n$ 
(recall \eqref{defw}). Note that the quenched free energies per monomer introduced 
until now are all of the form
\be{unifb}
\psi_n=\tfrac1n\log \sum_{\pi\in\bar\cW_n} e^{\,H_n(\pi)},
\ee
where $H_n(\pi)$ may depend on $\omega$ and $\Omega$ and satisfies $-\alpha n\leq 
H_n(\pi)\leq \alpha n$ for all $\pi\in \bar\cW_n$ (recall that $|\beta|\leq \alpha$ 
in $\CONE$). Since $1\leq|\bar\cW_n|\leq |\cW_n|\leq 3^n$, we have
\be{boundel}
|\psi_n|\leq \log 3+\alpha =^\mathrm{def} C_{\text{uf}}(\alpha).
\ee
The uniformity of this bound in $n$, $\omega$ and $\Omega$ allows us to average over 
$\omega$ and/or $\Omega$ or to let $n\to \infty$.


\subsubsection{Variational formulas for the free energy in a single column}
\label{newsec}

We next show how the free energies per column can be expressed in terms of two 
variational formulas involving the path entropy and the single interface free 
energy defined in Section \ref{lininter}. Note that $M\in \N$ is given
until the end of the section.

\medskip\noindent 
{\bf Free energy in columns of class $\AB$.}
Pick $\Theta\in \cV_{\AB,M}$ and put
\begin{align}
\label{defl1}
\nonumber l_1&=1_{\{\Delta \Pi>0\}} 
(n_1-b_0)+1_{\{\Delta \Pi<0\}} (b_0-n_0),\\
\nonumber l_j&=1_{\{\Delta \Pi>0\}} 
(n_j-n_{j-1})+1_{\{\Delta \Pi<0\}} (n_{-j+2}-n_{-j+1}) \quad \text{for} \quad 
j\in \{2,\dots,|k_\Theta|\},\\
l_{|k_\Theta|+1}&=1_{\{\Delta \Pi>0\}} 
(\Delta\Pi+b_1-n_{k_\Theta})+1_{\{\Delta \Pi<0\}} (n_{k_\Theta+1}-\Delta\Pi-b_1),
\end{align}
i.e., $l_1$ is the vertical distance between the entrance point and the first 
interface, $l_{i}$ is the vertical distance between the $i$-th interface and 
the $(i+1)$-th interface, and $l_{|k_\Theta|+1}$ is the vertical distance between 
the last interface and the exit point.

Denote by $(h)$ and $(a)$ the triples $(h_A,h_B,h^\cI)$ and $(a_A,a_B,a^\cI)$. 
For $(l_A,l_B)\in (0,\infty)^2$ and $u\geq l_A+l_B+1$, put
\begin{align}
\label{defnu}
\nonumber
\cL(l_A,l_B; u)=\big\{(h),(a)\in [0,1]^3\times [0,\infty)^3\colon\, 
&h_A+h_B+h^\cI=1,\, a_A+a_B+a^\cI=u \\
& a_A\geq h_A+l_A,\, a_B\geq h_B+l_B,\,  a^\cI\geq h^\cI\big\}.
\end{align}
With the help of \eqref{defl1} and \eqref{defnu} we can now provide a variational 
characterization of the free energy in columns of type $\Theta$ of class $\AB$.
Let $l_A(\chi,\Delta \Pi,b_0,b_1)$ and $l_B(\chi,\Delta \Pi,b_0,b_1)$ correspond 
to the minimal vertical distance the copolymer must cross in blocks of type $A$ 
and $B$, respectively, in a column with disorder $\chi$ when going from $(0,b_0)$ 
to $(1,\Delta\Pi+b_1)$, i.e., 
\begin{align}
\label{bl}
\nonumber
l_A(\chi,\Delta \Pi,b_0,b_1)
&=1_{\{\Delta \Pi>0\}} \sum_{j=1}^{|k_\Theta|+1} l_j 1_{\{\chi(n_{j-1})=A\}} 
+1_{\{\Delta \Pi<0\}} \sum_{j=1}^{|k_\Theta|+1} l_j 1_{\{\chi(n_{-j+1})=A\}}, \\
l_B(\chi,\Delta \Pi,b_0,b_1)
&=1_{\{\Delta \Pi>0\}} \sum_{j=1}^{|k_\Theta|+1} l_j 1_{\{\chi(n_{j-1})=B\}} 
+1_{\{\Delta \Pi<0\}} \sum_{j=1}^{|k_\Theta|+1} l_j 1_{\{\chi(n_{-j+1})=B\}}.
\end{align}
The following proposition will be proven in Section \ref{proofprop1}.

\begin{proposition}
\label{energ}
For $(\Theta,u)\in \cV^*_{\AB,M}$, 
\begin{align}
\label{Bloc of type I}
\nonumber \psi(\Theta,u)
&=\psi_{\AB}(u,l_A,l_B)\\
&=\sup_{(h),(a) \in \cL(l_A,\, l_B;\,u)}
\frac{a_A\, \tilde{\kappa}\big(\tfrac{a_A}{h_A},
\tfrac{l_A}{h_A}\big)+a_B\,\big[\tilde{\kappa}\big(\tfrac{a_B}{h_B},
\tfrac{l_B}{h_B}\big)+\tfrac{\beta-\alpha}{2}\big]
+a^\cI\,\phi^\cI(\tfrac{a^\cI}{h^\cI})}{u}.
\end{align}
\end{proposition}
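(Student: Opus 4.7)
\textbf{Proof plan for Proposition \ref{energ}.}

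The strategy is to prove matching upper and lower bounds by decomposing any trajectory $\pi \in \cW_{\Theta,u,L}$ (with $\Theta=(\chi,\Delta\Pi,b_0,b_1,1)\in \cV_{\AB,L,M}$) into three coherent classes of excursions and then recognizing each class as an instance of one of the microscopic building blocks of Section~\ref{lininter}. Concretely, I would slice $\pi$ horizontally according to the block it currently occupies, and then partition its steps into: (i) steps spent in $A$-blocks whose trajectory segment does not touch an $AB$-interface; (ii) steps spent in $B$-blocks with the same restriction; and (iii) steps belonging to excursions that do touch an $AB$-interface, which I further group to each interface $\cI_j$ separately. Letting $h_A L, h_B L, h^\cI L$ be the total horizontal displacements and $a_A L, a_B L, a^\cI L$ the total numbers of steps in each class, the identities $h_A+h_B+h^\cI=1$ and $a_A+a_B+a^\cI=u$ hold, and the minimal-length constraints $a_A \ge h_A + l_A$, $a_B \ge h_B + l_B$, $a^\cI \ge h^\cI$ are immediate, so $((h),(a))\in \cL(l_A,l_B;u)$.

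For the upper bound I would first discretize $((h),(a))$ on a grid of mesh $1/L$, which costs only a polynomial prefactor in $L$ (absorbed in the $\tfrac{1}{uL}\log$). Fixing such a discrete allocation, I enumerate trajectories within each class separately: the number of ways to realize class (i) with $a_A L$ steps covering horizontal $h_A L$ and vertical $l_A L$ is $\exp[a_A L\,\tilde\kappa_L(a_A/h_A, l_A/h_A)]$ up to a subexponential factor by Proposition~\ref{lementr}, and analogously for class (ii); but a step in a $B$-block also contributes $\beta\ind_{\{\omega_i=B\}}-\alpha\ind_{\{\omega_i=A\}}$ to $H^{\omega,\chi}$, which by the LLN for Bernoulli $\omega$ averages to $\tfrac{\beta-\alpha}{2}$ per step with concentration $O(1/\sqrt{L})$. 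Finally, class (iii) splits along each interface hit, and each piece is precisely a single-linear-interface partition function whose free energy per step converges to $\phi^\cI(a^\cI/h^\cI)$ by Proposition~\ref{l:feinflim}; by concavity of $\mu\mapsto \mu\phi^\cI(\mu)$ (Assumption~\ref{assu}) and of $(u,l)\mapsto u\tilde\kappa(u,l)$ the contribution of multiple pieces is majorized by grouping them into a single effective $(a^\cI,h^\cI)$. Taking the maximum over the discretized allocations, passing $L\to \infty$, and using the uniform bound \eqref{boundel} to control the $\omega$-average yields the $\le$ inequality.

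For the matching lower bound I would fix any $((h),(a))\in \cL(l_A,l_B;u)$ in the interior of the constraint set (the boundary cases then follow by continuity, using uniform convergence from Proposition~\ref{convfree1}). I then explicitly build trajectories: in each maximal pure-$A$ strip between consecutive interfaces I insert a path of the prescribed horizontal and vertical displacements and number of steps, realized by a near-optimizer in Proposition~\ref{lementr}; in each pure-$B$ strip I do the same, and its energy contribution self-averages to $a_B(\beta-\alpha)/2$ by Azuma--Hoeffding applied to the linear functional of $\omega$; and around each hit interface I concatenate a near-maximizer of the single-linear-interface partition function of Proposition~\ref{l:feinflim}, whose free energy per step converges to $\phi^\cI(a^\cI/h^\cI)$. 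Concatenating all pieces with the correct entrance and exit heights gives a valid path in $\cW_{\Theta,u,L}$ and a lower bound on $Z_L^\omega(\Theta,u)$ which matches the claimed supremum up to an error tending to zero.

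The main obstacle is the interface step: several disjoint interface excursions may occur with different local parameters $(a^\cI_j, h^\cI_j)$, yet the proposition aggregates them into a single $(a^\cI, h^\cI)$. Establishing that this aggregation is free of loss requires the strict concavity of $\mu\mapsto \mu \phi^\cI(\mu)$ from Assumption~\ref{assu} together with a Jensen-type argument that relocates steps between interfaces until they all run at the common effective rate $a^\cI/h^\cI$. A secondary technical issue is the uniformity of all estimates in the finitely many interface heights allowed inside a column of type $\Theta\in\cV_{\AB,M}$ (since $|\Delta \Pi|\le M$, their number is bounded), and the interchange of $\omega$-averaging with the supremum, which is handled by the uniform bound $C_{\text{uf}}(\alpha)$ in \eqref{boundel} and standard concentration.
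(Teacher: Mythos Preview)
Your proposal is essentially the paper's own argument, with the same three microscopic ingredients (entropy in $A$, entropy plus drift in $B$, single-interface free energy), the same concavity/aggregation step, and the same concentration input. The paper organizes it slightly differently: rather than directly splitting the steps of $\pi$ into your classes (i)--(iii), it first records the ordered sequence $J(\pi)=(j_1,\dots,j_r)$ of interfaces successively hit, partitions $\cW_{\Theta,u,L}$ over $j\in\cS_{\Theta,u}$, and proves a uniform limit $\psi_L(\Theta,u,j)\to\psi_{\AB}(u,l_{A,\Theta,j},l_{B,\Theta,j})$ via the chain $\psi_1\prec\psi_2\prec\psi_3\prec\psi_4$ (your Steps: remove $\omega$ by concentration, pass to infinite-volume entropies/free energies, aggregate by concavity, recognize the variational formula). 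Only at the very end does it optimize over $j$ and observe that the minimum of $l_{A,\Theta,j}$ (resp.\ $l_{B,\Theta,j}$) over $j$ is attained at the monotone sequence $f$, giving $l_A$ (resp.\ $l_B$); this uses that $l\mapsto\tilde\kappa(u,l)$ is decreasing on $[0,u-1]$.

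The one point you gloss over is precisely this last step: in your upper bound you write the class-(i) entropy as $\exp[a_AL\,\tilde\kappa(a_A/h_A,l_A/h_A)]$, but after concavity the aggregated vertical displacement in $A$ is $l_{A,\Theta,j}\ge l_A$, not $l_A$. The inequality goes the right way (smaller $|l|$ gives larger $\tilde\kappa$, hence a valid upper bound), but you should say so explicitly; the paper isolates this as equation~\eqref{add8}. Also, for the aggregation of the $A$- and $B$-pieces you need concavity of $(u,l)\mapsto u\tilde\kappa(u,l)$, not just of $\mu\mapsto\mu\phi^\cI(\mu)$; and strict concavity (Assumption~\ref{assu}) is not actually needed here --- ordinary concavity suffices for the upper bound, and the lower bound is by explicit construction along the monotone path $f$.
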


\medskip\noindent
{\bf Free energy in columns of class $\nAB$.}
Pick $\Theta\in \cV_{\nAB,M}$. In this case, there is no $AB$-interface between 
$b_0$ and $\Delta\Pi+b_1$, which means that $\Delta\Pi< n_1$ if $\Delta\Pi\geq 0$ 
and $\Delta\Pi\geq n_0$ if $\Delta\Pi<0$ ($n_0$ and $n_1$ being defined in 
\eqref{efn}). Let $l_{\nAB}(\Delta \Pi,b_0,b_1)$ be the vertical distance between 
the entrance point $(0,b_0)$ and the exit point $(1,\Delta\Pi+b_1)$, i.e.,
\begin{align}
l_{\nAB}(\Delta \Pi,b_0,b_1) &= 1_{\{\Delta \Pi\geq 0\}} (\Delta \Pi-b_0+b_1)
+ 1_{\{\Delta \Pi<0\}} (|\Delta \Pi|+b_0-b_1)+ 1_{\{\Delta \Pi=0\}} |b_1-b_0|,
\end{align}
and let $l_\AB(\chi,\Delta \Pi,b_0,b_1)$ be the minimal vertical distance a trajectory 
has to cross in a column with disorder $\chi$, starting from $(0,b_0)$, to reach 
the closest $AB$-interface before exiting at $(1,\Delta\Pi+b_1)$, i.e.,
\begin{align}
l_\AB(\chi,\Delta \Pi,b_0,b_1)
&=\mathrm{min}\{2 n_1-b_0-b_1-\Delta \Pi, 2 |n_0| +b_0+b_1+\Delta \Pi\}.
\end{align}
The following proposition will be proved in Section \ref{proofprop1}.

\begin{proposition}
\label{energ1}
For $(\Theta,u)\in \cV^*_{\nAB,M}$ such that $x_\Theta=1$,
\begin{align}
\label{Bloc of type NI,B}
\psi(\Theta,u)
=\tilde{\kappa}(u,l_{\nAB})+\tfrac{\beta-\alpha}{2}\,1_{\{\chi(0)=B\}}.
\end{align}
For  $(\Theta,u)\in \cV^*_{\nAB,M}$ such that $x_\Theta=2$,
\begin{align}
\nonumber \psi(\Theta,u)&=\psi_{\nAB}(u,l_\AB;\,\chi(0))\\
&=\sup_{\stackrel{h^\cI\in [0,1],}{u^\cI\in [h^\cI, u+h^\cI-1-l_\AB]}}
\frac{(u-u^\cI) \big[\tilde{\kappa}\big(\tfrac{u-u^\cI}{1-h^\cI},
\tfrac{l_\AB}{1-h^\cI}\big)+\tfrac{\beta-\alpha}{2}\,
1_{\{\chi(0)=B\}}\big]+u^\cI \phi^\cI(\tfrac{u^\cI}{h^\cI})}{u}.
\end{align}
\end{proposition}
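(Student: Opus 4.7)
The argument treats the two cases $x_\Theta=1$ and $x_\Theta=2$ separately, and in each case combines the path-counting asymptotics of Proposition \ref{lementr}, the strong law of large numbers applied to $\omega$, and (for $x_\Theta=2$) the single-interface result of Proposition \ref{l:feinflim}.

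For $x_\Theta=1$, every $\pi\in\cW_{\Theta,u,L}$ stays strictly between the two consecutive interfaces at heights $n_0 L$ and $n_1 L$, so the block label $\chi^L_{(\pi_{i-1},\pi_i)}$ is the constant $\chi(0)$ at every step. Hence $H^{\omega,\chi}_{uL,L}(\pi)$ does not depend on $\pi$: it is identically zero when $\chi(0)=A$, and equals $\sum_{i=1}^{uL}\bigl(\beta\,1_{\{\omega_i=B\}}-\alpha\,1_{\{\omega_i=A\}}\bigr)$ when $\chi(0)=B$. Factoring this constant out of $Z^\omega_L(\Theta,u)$ yields
\[
\psi^{\omega}_L(\Theta,u) = \frac{1}{uL}\log|\cW_{\Theta,u,L}| + 1_{\{\chi(0)=B\}}\,\frac{1}{uL}\sum_{i=1}^{uL}\bigl(\beta\,1_{\{\omega_i=B\}}-\alpha\,1_{\{\omega_i=A\}}\bigr).
\]
The second summand converges $\omega$-a.s.\ to $\tfrac{\beta-\alpha}{2}\,1_{\{\chi(0)=B\}}$ by the strong law. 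For the first, Proposition \ref{lementr} gives $\tfrac{1}{uL}\log|\cW_L(u,l_{\nAB})|\to\tilde{\kappa}(u,l_{\nAB})$; a standard truncation/reflection argument then shows that the additional constraint of remaining in a strip of width at least $L$ does not affect the exponential rate, since the vertical fluctuation of the optimal $uL$-step paths is $O(\sqrt{L})=o(L)$ whenever $u$ stays bounded.

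For $x_\Theta=2$, the plan is to decompose each admissible $\pi$ according to its sojourn at the closest AB-interface, at height $n^*L\in\{n_0 L,n_1 L\}$. Write $\pi$ as the concatenation of a \emph{bulk} piece of $(u-u^\cI)L$ steps performed in blocks of type $\chi(0)$ with horizontal extent $(1-h^\cI)L$ and vertical displacement at least $l_\AB L$, together with an \emph{interface} piece of $u^\cI L$ steps along $\cI$ with horizontal extent $h^\cI L$. The constraints $u^\cI\geq h^\cI$ and $u-u^\cI\geq 1-h^\cI+l_\AB$ come from the geometry of each piece. By the $x_\Theta=1$ analysis, the bulk factor contributes per-step free energy $\tilde{\kappa}\bigl(\tfrac{u-u^\cI}{1-h^\cI},\tfrac{l_\AB}{1-h^\cI}\bigr)+\tfrac{\beta-\alpha}{2}\,1_{\{\chi(0)=B\}}$, and by Proposition \ref{l:feinflim} the interface factor contributes per-step free energy $\phi^\cI(\tfrac{u^\cI}{h^\cI})$. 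Summing the exponentials over all discretized schedules $(u^\cI,h^\cI)$ on the $1/L$-grid and dividing by $uL$ produces exactly the supremum in the statement by the Laplace principle.

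The main obstacle is the cut-and-paste step. For the lower bound one prescribes a schedule $(u^\cI,h^\cI)$, builds the bulk and interface pieces independently, and glues them at matching vertical positions; this requires the uniform convergence in Proposition \ref{l:feinflim} over compact ranges of $\mu$ together with continuity of $\tilde{\kappa}$. For the upper bound one partitions $\cW_{\Theta,u,L}$ by its schedule, chosen on a $1/L$-grid of polynomial cardinality, invokes Assumption \ref{assu} (strict concavity of $\mu\mapsto\mu\phi^\cI(\mu)$) to bound the discretization error, and absorbs the polynomial overcounting into an $o(L)$ correction. Controlling these two convergences uniformly under gluing, and in particular locating the entry/exit of the interface piece on a mesoscopic scale, is the technical heart of the proof.
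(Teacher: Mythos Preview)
Your outline is broadly aligned with the paper's scheme: the paper proves the analogous $\AB$ statement (Proposition~\ref{energ}) via a three-step comparison $\psi_1\prec\psi_2\prec\psi_3\prec\psi_4$ (remove $\omega$ by concentration, replace finite-size entropies and interface free energies by their limits, then read off the variational formula using concavity) and simply remarks that Proposition~\ref{energ1} ``follows in a similar manner after minor modifications.'' Your $x_\Theta=1$ argument via the strong law is in fact a cleaner special case of that machinery, since the Hamiltonian is constant in $\pi$; the strip constraint is exactly what Lemma~\ref{conunifalt1} handles (your $O(\sqrt L)$ heuristic is the right intuition but not a proof).

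There is, however, a real gap in your $x_\Theta=2$ upper bound. You decompose $\pi$ into \emph{one} bulk piece and \emph{one} interface piece and then partition by a single schedule $(u^\cI,h^\cI)$. But a path in $\cW_{\Theta,u,L}$ with $x_\Theta=2$ may touch the nearest interface, return to the bulk, touch it again, and so on; it may also reach \emph{both} interfaces $\cI_{n_0}$ and $\cI_{n_1}$. Your partition does not cover these trajectories. The paper's cure (carried out explicitly for the $\AB$ case in Step~2, equation~\eqref{defAA2}) is to first record the full sequence of interface visits, and then use the concavity of $\mu\mapsto\mu\phi^\cI(\mu)$ and of $(u,l)\mapsto u\tilde\kappa(u,l)$ to merge all interface pieces into a single effective $(a^\cI,h^\cI)$ and all bulk pieces into a single effective $(a,h)$; finally, the monotonicity of $l\mapsto\tilde\kappa(u,l)$ in Lemma~\ref{l:lemconv2}(ii) lets one replace the realised bulk vertical distance by the minimal value $l_\AB$ (this is the $\nAB$ analogue of~\eqref{add8}). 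Without this aggregation step your upper bound does not close.

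A smaller point: you invoke Assumption~\ref{assu} (strict concavity of $\mu\mapsto\mu\phi^\cI(\mu)$) to ``bound the discretization error.'' Nothing in the upper bound requires strict concavity; plain concavity suffices both for the aggregation above and for passing from the $1/L$-grid to the continuum. Assumption~\ref{assu} is used only later in the paper, in the analysis of the column variational formula (Lemma~\ref{concav}), not in the convergence proof itself.
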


The importance of Propositions~\ref{energ}--\ref{energ1} is that they \emph{express 
the free energy in a single column in terms of the path entropy in a single column 
$\tilde\kappa$ and the free energy along a single linear interface $\phi^\cI$}, 
which were defined in Section~\ref{lininter} and are well understood.


\subsection{Mesoscopic percolation frequencies}
\label{Percofreq}

In this section, we define a set of probability laws providing the frequencies 
with which each type of column can be crossed by the copolymer. 

\medskip\noindent
{\bf Coarse-grained paths.}
For $x\in \N_0\times\Z$ and $n\in \N$, let $c_{x,n}$ denote the center of the block
$\Lambda_{L_n}(x)$ defined in \eqref{blocks}, i.e., 
\be{defc}
c_{x,n}=x L_n+(\tfrac12,\tfrac12) L_n,
\ee
and abbreviate
\be{defc2}
(\N_0\times\Z)_n=\{c_{x,n}\colon\, x\in \N_0\times\Z\}.
\ee
Let $\widehat{\cW}$ be the set of \emph{coarse-grained paths} on $(\N_0\times\Z)_n$ 
that start at $c_{0,n}$, are self-avoiding and are allowed to jump up, down and 
to the right between neighboring sites of $(\N_0\times\Z)_n$, i.e., the increments 
of $\widehat{\Pi} = (\widehat{\Pi}_j)_{j\in\N_0} \in \widehat{\cW}$ are $(0,L_n), 
(0,-L_n)$ and $(L_n,0)$. (These paths are the coarse-grained counterparts of the 
paths $\pi$ introduced in \eqref{defw}.) For $l\in \N\cup \{\infty\}$, let 
$\widehat{\cW}_l$ be the set of $l$-step coarse-grained paths.

Recall, for $\pi\in \cW_{n}$, the definitions of $N_\pi$ and $(v_j(\pi))_{j\leq N_\pi-1}$ 
given below \eqref{deftau}.  With $\pi$ we associate a coarse-grained path 
$\widehat{\Pi}\in \widehat{\cW}_{N_\pi}$ that describes how $\pi$ moves with 
respect to the blocks. The construction of $\widehat{\Pi}$ is done as follows: 
$\widehat{\Pi}_0=c_{(0,0)}$, $\widehat{\Pi}$ moves vertically until it reaches 
$c_{(0,v_0)}$, moves one step to the right to $c_{(1,v_0)}$, moves vertically 
until it reaches $c_{(1,v_1)}$, moves one step to the right to $c_{(2,v_1)}$, and 
so on. The vertical increment of $\widehat{\Pi}$ in the $j$-th column is 
$\Delta\widehat{\Pi}_j=(v_j-v_{j-1}) L_n$ (see Figs.~\ref{fig6}--\ref{block2}).

\begin{figure}[htbp]
\begin{center}
\includegraphics[width=.42\textwidth]{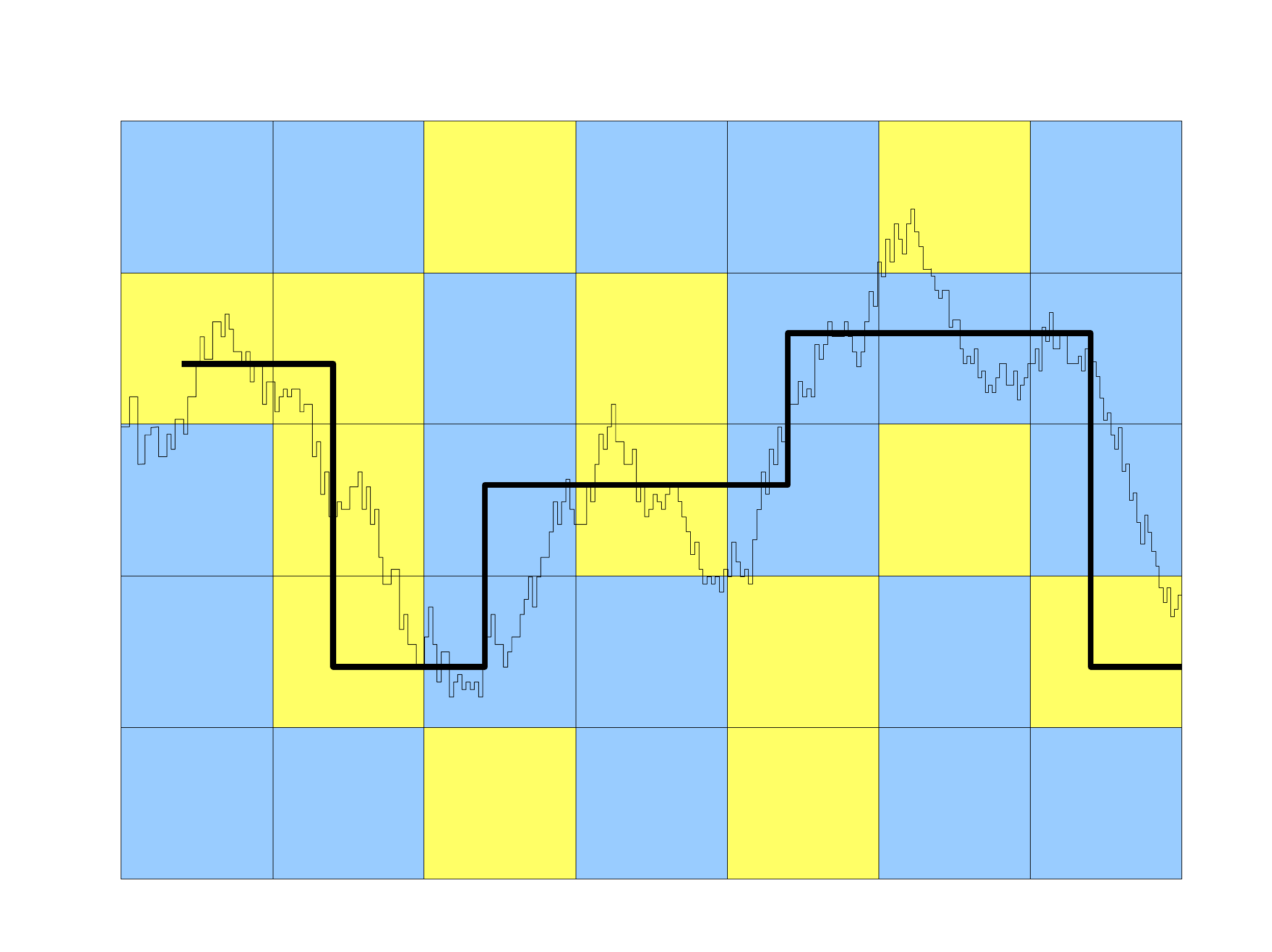}
\end{center}
\vspace{-.5cm}
\caption{Example of a coarse-grained path.}
\label{fig4}
\end{figure}

To characterize a path $\pi$, we will often use the sequence of vertical 
increments of its associated coarse-grained path $\widehat{\Pi}$, modified in 
such a way that it does not depend on $L_n$ anymore. To that end, with every 
$\pi\in \cW_n$ we associate $\Pi=(\Pi_k)_{k=0}^{N_\pi-1}$ such that 
$\Pi_0=0$ and,
\begin{align}
\label{defpi}
\Pi_k=\sum_{j=0}^{k-1} \Delta\Pi_j\quad \text{with} \quad  
\Delta\Pi_j=\frac{1}{L_n} \Delta \widehat{\Pi}_j, \qquad
j = 0,\dots,N_\pi-1.
\end{align}
Pick $M\in \N$ and note that $\pi\in\cW_{n,M}$ if and only if $|\Delta \Pi_j|\leq M$ for 
all $j\in \{0,\dots, N_\pi-1\}$.

\medskip\noindent
{\bf Percolation frequencies along coarse-grained paths.}
Given $M\in \N$, we denote by $\cM_1(\overline\cV_M)$ 
the set of probability measures on $\overline\cV_M$. Pick $\Omega \in \{A,B\}^{\N_0\times\Z}$, $\Pi\in \Z^{\N_0}$ such that 
$\Pi_0=0$ and $|\Delta\Pi_i|\leq M$ for all $i\geq 0$ and $b=(b_j)_{j\in \N_0}\in 
(\mathbb{Q}_{(0,1]})^{\N_0}$. Set $\Theta_{\text{traj}}=(\Xi_j)_{j\in \N_0}$ with
\be{defxii}
\Xi_j=\big(\Delta \Pi_j,b_j,b_{j+1}\big), \qquad j\in \N_0,
\ee 
let
\be{defX}
\cX_{\,\Pi,\Omega}=\big\{x\in \{1,2\}^{\N_0}\colon\,
(\Omega(i,\Pi_i+\cdot),\Xi_i,x_i)\in  \cV_M \,\,\, \forall\,i\in \N_0\big\},
\ee
and for $x\in \cX_{\,\Pi,\Omega}$ set
\be{defth}
\Theta_j=\big(\Omega(j,\Pi_j+\cdot),
\Delta \Pi_j,b_j,b_{j+1},x_j\big), \qquad j\in \N_0.
\ee
With the help of \eqref{defth}, we can define the empirical distribution
\begin{equation}
\label{defrho}
\rho_N(\Omega,\Pi,b,x)(\Theta) = \frac{1}{N} \sum_{j=0}^{N-1} 
1_{\{\Theta_j=\Theta\}},
\quad N\in\N,\,\Theta\in \overline\cV_M,
\end{equation}

\begin{definition}
\label{RMNdef}
For $\Omega \in \{A,B\}^{\N_0\times\Z}$ and $M\in \N$, let 
\be{RMNdefalt}
\begin{aligned}
\cR^\Omega_{M,N} &= \big\{\rho_N(\Omega,\Pi,b,x)\ \text{with}\  
b=(b_j)_{j\in \N_0} \in (\mathbb{Q}_{(0,1]})^{\N_0},\\
&\qquad \Pi=(\Pi_j)_{j\in\N_0} \in \{0\}\times \Z^{\N} \colon\,
|\Delta\Pi_j|\leq M\,\ \ \forall\,j\in\N_0,\\
&\qquad x=(x_j)_{j\in \N_0} \in \{1,2\}^{N_0}\colon\, \big(\Omega(j,\Pi_j+\cdot),
\Delta \Pi_j,b_j,b_{j+1},x_j\big)\in \cV_M\big\}
\end{aligned} 
\ee
and
\be{RMdef}
\cR^\Omega_M = \mathrm{closure}\Big(\cap_{N'\in\N} \cup_{N \geq N'}\, 
\cR^\Omega_{M,N}\Big),
\ee
both of which are subsets of $\cM_1(\overline\cV_M)$.
\end{definition}

\begin{proposition}
\label{properc}
For every $p \in (0,1)$ and $M\in \N$ there exists a closed set $\cR_{p,M}\subsetneq
\cM_1(\overline\cV_M)$ such that 
\be{mesoperc}
\cR_{M}^\Omega=\cR_{p,M} \text{ for } \P\text{-a.e.}\,\Omega.
\ee
\end{proposition}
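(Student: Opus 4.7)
The plan is to apply a Kolmogorov $0$-$1$ law to the random closed set $\cR_M^\Omega$, and then to establish strict inclusion by a density argument that exploits the $M$-step constraint on coarse-grained paths. As preparation, $\overline\cV_M$ is compact: its infinite-dimensional factor $\{A,B\}^\Z$ carries the compact product topology, the integer component is confined to $\{-M,\dots,M\}$ by $|\Delta\Pi|\leq M$, and the remaining factors $[0,1]^2\times\{1,2\}$ are compact. Hence $\cM_1(\overline\cV_M)$ is compact Polish under weak convergence, and the space of its closed subsets is compact Polish under the Hausdorff--Fell topology, which makes $\cR_M^\Omega$ a Polish-valued random element.

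The first core step is to show that $\cR_M^{\Omega}=\cR_M^{\Omega'}$ whenever $\Omega$ and $\Omega'$ differ only on columns $\{0,\dots,k-1\}$. Given admissible $(\Pi,b,x)$ for $\Omega$, I would modify at most the first $k$ entries of $x$ to obtain $(\Pi,b,x')$ admissible for $\Omega'$; this is always possible since for any $(\chi,\Delta\Pi,b_0,b_1)$ at least one value $x\in\{1,2\}$ makes the tuple lie in $\cV_M$. The empirical measures $\rho_N(\Omega,\Pi,b,x)$ and $\rho_N(\Omega',\Pi,b,x')$ then differ in total variation by at most $k/N$, so they have the same weak limits in $\cM_1(\overline\cV_M)$, and therefore $\cR_M^\Omega=\cR_M^{\Omega'}$. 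It follows that $\cR_M^\Omega$ is measurable with respect to the tail $\sigma$-algebra of the i.i.d.\ sequence $(\Omega(j,\cdot))_{j\in\N_0}$ of column disorders. By Kolmogorov's $0$-$1$ law this $\sigma$-algebra is $\P$-trivial, and since a tail-measurable Polish-valued random element is $\P$-a.s.\ constant, there exists a deterministic closed set $\cR_{p,M}\subseteq \cM_1(\overline\cV_M)$ with $\cR_M^\Omega=\cR_{p,M}$ for $\P$-a.e.\ $\Omega$.

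For the strict inclusion $\cR_{p,M}\subsetneq \cM_1(\overline\cV_M)$ I would exhibit $\rho_0=\delta_{\Theta^*}$, with $\Theta^*=(\chi^*,0,b_0,b_0,1)$ and $\chi^*\equiv A$, as an element of $\cM_1(\overline\cV_M)\setminus \cR_{p,M}$. Membership of $\rho_0$ in $\cR_{p,M}$ would require, for every $K\in\N$ and every $\varepsilon>0$, sequences $(\Pi^{(N)},b^{(N)},x^{(N)})$ such that asymptotically a fraction $\geq 1-\varepsilon$ of indices $j\leq N-1$ satisfies $\Omega(j,\Pi_j^{(N)}+\ell)=A$ for all $|\ell|\leq K$. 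Using $|\Delta\Pi_j|\leq M$ together with a Borel--Cantelli estimate on the density of all-$A$ windows of length $2K+1$ within a vertical strip of size $2M+1$, the above fraction is a.s.\ bounded above by a constant strictly smaller than $1$ as soon as $(2M+1)p^{2K+1}<1/2$, contradicting $\rho_0\in\cR_{p,M}$.

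The hardest part is the stability step. The $O(k/N)$ total-variation estimate is intuitive, but one must check that vanishing total-variation distance indeed preserves weak limits in $\cM_1(\overline\cV_M)$ (which follows because total-variation dominates any metric compatible with weak convergence on a compact metric space), and one must argue tail-measurability of a random closed set rather than of a scalar random variable---most conveniently by testing membership through a countable base of open sets generating the Hausdorff--Fell topology. A secondary difficulty is the density estimate for strict inclusion, which genuinely exploits the bounded vertical jump constraint $|\Delta\Pi_j|\leq M$; without such a constraint, a path could in principle chase arbitrarily rare configurations across a widening range of heights, so quantitative control of the available heights at step $j$ is essential.
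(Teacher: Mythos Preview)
Your proof is correct and follows the same strategy as the paper: invariance of $\cR_M^\Omega$ under finite modifications of $\Omega$, tail-measurability, Kolmogorov's zero-one law, and strict inclusion from the bounded vertical displacement. You supply considerably more detail than the paper's four-sentence argument (in particular the Hausdorff--Fell framework and the explicit counterexample $\delta_{\Theta^*}$); the only place that deserves tightening is the strict-inclusion sketch, where your column-by-column density bound on all-$A$ windows in a strip of width $2M+1$ does not by itself control an \emph{adaptive} choice of $\Pi_{j-1}$, so you should combine the per-column large-deviation estimate with a union bound over the $(2M+1)^N$ admissible coarse-grained paths of length $N$ (choosing $K$ large enough that the exponential rate beats $\log(2M+1)$).
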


\begin{proof}
Note that, for every $\Omega\in \{A,B\}^{\N_0\times\Z}$, the set $\cR_{M}^\Omega$ 
does not change when finitely many variables in $\Omega$ are changed. Therefore 
$\cR_{M}^\Omega$ is measurable with respect to the tail $\sigma$-algebra of 
$\Omega$. Since $\Omega$ is an i.i.d.\ random field, the claim follows from 
Kolmogorov's zero-one law. Because of the constraint on the vertical displacement, 
$\cR_{p,M}$ does not coincide with $\cM_1(\overline\cV_M)$.
\end{proof}


\section{Proof of Propositions~\ref{convfree1}--\ref{energ1}}
\label{proofprop1}

In this section we prove Propositions~\ref{convfree1} and \ref{energ}--\ref{energ1}, 
which were stated in Sections~\ref{lininter}--\ref{Percofreq} and contain the 
precise definition of the key ingredients of the variational formula in 
Theorem~\ref{varformula}. In Section~\ref{proofofgene} we will use these 
propositions to prove Theorem~\ref{varformula}. 

In Section \ref{colcross} we associate with each trajectory $\pi$ in a column 
a sequence recording the indices of the $AB$-interfaces successively visited 
by $\pi$. The latter allows us to state a key proposition, Proposition~\ref{convunifr} 
below, from which Propositions~\ref{convfree1} and \ref{energ}--\ref{energ1} are 
straightforward consequences. In Section~\ref{convunifra} we give an outline of 
the proof of Proposition~\ref{convunifr}, in Sections~\ref{s1}--\ref{s3} we 
provide the details.


\subsection{Column crossing characteristic}
\label{colcross}


\subsubsection{The order of the visits to the interfaces}

Pick $(M,m)\in \EIGH$. To prove  Proposition~\ref{convfree1}, instead of considering 
$(\Theta,u)\in\cV^{*,\,m}_M$, we will restrict to $(\Theta,u)\in\cV^{*,\,m}_{\text{int},M}$. 
Our proof can be easily extended to $(\Theta,u)\in \cV^{*,\,m}_{\text{nint},M}$.

Pick $(\Theta,u)\in \cV^{*,\,m}_{\AB,M}$, recall \eqref{efn} and set $\cJ_{\Theta,u}
=\{\cN_{\Theta,u}^{\downarrow},\dots,\cN_{\Theta,u}^{\uparrow}\},$ with
\begin{align}
\label{defntheta}
\cN^{\uparrow}_{\Theta,u}
= &\max\{i\geq 1\colon n_i\leq u\}\quad \text{and}
\quad \cN^{\uparrow}_{\Theta,u}=0\quad  \text{if}\quad  n_1>u.  \\
\nonumber\cN^{\downarrow}_{\Theta,u}
=&\min \{i\leq 0\colon |n_i|\leq u\} \quad \text{and}
\quad \cN^{\downarrow}_{\Theta,u}=1\quad  \text{if}\quad  |n_0|>u.
\end{align}
Next pick $L\in \N$ so that $(\Theta,u)\in \cV^{*}_{\AB,L,M}$ and recall that for 
$j\in \cJ_{\Theta,u}$ the $j$-th interface of the $\Theta$-column is $\cI_j=\{0,\dots,L\}
\times \{n_j L\}$. Note also that $\pi \in \cW_{\Theta,u,L}$ makes $uL$ steps inside 
the column and therefore can not reach the $AB$-interfaces labelled outside 
$\{\cN^{\downarrow}_{\Theta,u},\dots,\cN^{\uparrow}_{\Theta,u}\}$.

First, we associate with each trajectory $\pi\in \cW_{\Theta,u,L}$ the sequence 
$J(\pi)$ that records the indices of the interfaces that are successively visited by 
$\pi$. Next, we pick $\pi\in \cW_{\Theta,u,L}$, and define $\tau_1, J_1$ as
\begin{equation}
\tau_1=\inf\{i\in\N\colon\, \exists j\in \cJ_{\Theta,u} 
\colon\,\pi_i\in \cI_j \}, \qquad \pi_{\tau_1}\in \cI_{J_1},
\end{equation} 
so that $J_1=0$ (respectively, $J_1=1$) if the first interface reached by $\pi$ 
is $\cI_0$ (respectively, $\cI_1$). For $i\in\N\setminus\{1\}$, we define $\tau_i,J_i$ 
as 
\begin{equation}
\tau_i=\inf\big\{t>\tau_{i-1}\colon\,\exists 
j\in \cJ_{\Theta,u}\setminus\{ J_{i-1}\}, \pi_i\in \cI_j\big\}, 
\qquad \pi_{\tau_i}\in \cI_{J_i},
\end{equation} 
so that the increments of $J(\pi)$ are restricted to $-1$ or $1$. The length of 
$J(\pi)$ is denoted by $m(\pi)$ and corresponds to the number of jumps made by 
$\pi$ between neighboring interfaces before time $uL$, i.e., 
$J(\pi)=(J_i)_{i=1}^{m(\pi)}$ with 
\be{add6}
m(\pi)=\max\{i\in\N\colon\,\tau_i\leq uL\}.
\ee
Note that $(\Theta,u)\in \cV_{\AB,M}^{*,\,m}$ necessarily implies $k_{\Theta}\leq 
m(\pi)\leq u\leq m$. Set
\be{add7}
\cS_r=\{j=(j_i)_{i=1}^r\in \Z^\N\colon\, j_1\in \{0,1\},\,  
j_{i+1}-j_i\in \{-1,1\}\,\,\forall\,1\leq i\leq r-1\}, \qquad r\in \N, 
\ee
and, for $\Theta\in \cV$, $r\in \{1,\dots, m\}$ and $j\in \cS_r$, define
\begin{align}
\label{defl}
\nonumber l_1&=1_{\{j_1=1\}} (n_{1}-b_0)+1_{\{j_1= 0\}} (b_0-n_{0}),\\
\nonumber l_i&=|n_{j_i}-n_{j_{i-1}}| \text{ for } i\in \{2,\dots,r\},\\
l_{r+1}&=1_{\{j_r=k_\Theta+1\}} (n_{k_\Theta+1}-\Delta \Pi-b_1)
+1_{\{j_r= k_\Theta\}} (\Delta \Pi +b_1-n_{k_\Theta}), 
\end{align}
so that $(l_i)_{i\in \{1,\dots,r+1\}}$ depends on $\Theta$ and $j$. Set
\begin{align}
\label{setofA}
\cA_{\Theta,j}
&=\{i\in \{1,\dots,r+1\}\colon\,\text{$A$ between}\,\cI_{j_{i-1}}\,
\text{and}\,\cI_{j_{i}}\},\\
\nonumber \cB_{\Theta,j}
&=\{i\in \{1,\dots,r+1\}\colon\,\text{$B$ between}\,\cI_{j_{i-1}}\, 
\text{and} \ \cI_{j_{i}}\},
\end{align}
and set $l_{\Theta,j}=(l_{A,\Theta,j},l_{B,\Theta,j})$ with
\begin{align}
\label{bl1}
l_{A,\Theta,j}
&={\textstyle \sum_{i\in \cA_{\Theta,j}}} l_i,\,\,l_{B,\Theta,j}
={\textstyle \sum_{i\in \cB_{\Theta,j}}} l_i.
\end{align}
For $L\in \N$ and  $(\Theta,u)\in \cV^{*,\,m}_{\AB,L,M}$, we denote by $\cS_{\Theta,u,L}$ 
the set $\{J(\pi), \pi\in \cW_{\Theta,u,L}\}$. It is not difficult to see that a sequence 
$j\in \cS_r$ belongs to $\cS_{\Theta,u,L}$ if and only if it satisfies the two following 
conditions. First, $j_r \in \{k_\Theta,k_\Theta +1\}$, since $j_r$ is the index of the 
interface last visited before the $\Theta$-column is exited. Second, $u\geq 1+l_{A,\Theta,j}
+l_{B,\Theta,j}$ because the number of steps taken by a trajectory $\pi\in \cW_{\Theta,u,L}$
satisfying $J(\pi)=j$ must be large enough to ensures that all interfaces $\cI_{j_s}$, 
$s\in \{1,\dots,r\}$, can be visited by $\pi$ before time $uL$. Consequently, 
$\cS_{\Theta,u,L}$ does not depend on $L$ and can be written as $\cS_{\Theta,u}
= \cup_{r=1}^{m}\cS_{\Theta,u,r}$, where 
\be{redefS}
\cS_{\Theta,u,r}=\{j\in \cS_r\colon j_r\in \{k_\Theta,k_\Theta+1\}, 
u \geq 1+l_{A,\Theta,j}+l_{B,\Theta,j}\}.
\ee
Thus, we partition $\cW_{\Theta,u,L}$ according to the value taken by $J(\pi)$, i.e.,
\be{zto}
\cW_{\Theta,u,L}=\bigcup_{r=1}^{m} \ \bigcup_{j\in \cS_{\Theta,u,r}} \ \cW_{\Theta,u,L,j}, 
\ee
where $\cW_{\Theta,u,L,j}$ contains those trajectories $\pi\in \cW_{\Theta,u,L}$  
for which $J(\pi)=j$. 

Next, for $j \in \cS_{\Theta,u}$, we define (recall \eqref{Hamiltonian1})
\be{partfunc3}
\psi^{\omega}_L(\Theta,u,j)
= \frac{1}{u L} \log Z^{\omega}_L(\Theta,u,j), \qquad 
\psi_L(\Theta,u,j)=\E\big[\psi^{\omega}_L(\Theta,u,j)\big],
\ee
with
\be{partfunc3alt}
Z^{\omega}_L(\Theta,u,j)
=\sum_{\pi \in \cW_{\Theta,u,L,j}} e^{H_{uL,L}^{\omega,\chi}(\pi)}.
\ee
For each $L\in \N$ satisfying $(\Theta,u)\in \cV^{*,\,m}_{\AB,L,M}$ and each $j\in 
\cS_{\Theta,u}$, the quantity $l_{A,\Theta,j} L$ (respectively, $l_{B,\Theta,j} L$) 
corresponds to the minimal vertical distance a trajectory $\pi\in \cW_{\Theta,u,L,j}$ 
has to cross in solvent $A$ (respectively, $B$).


\subsubsection{Key proposition}

Recalling \eqref{Bloc of type I} and \eqref{bl1}, we define the free energy associated with 
$\Theta,u,j$ as 
\begin{align}
\label{Bloc of type I1}
\psi(\Theta,u,j)
&=\psi_{\AB}(u,l_{\Theta,j})\\ \nonumber 
&=\sup_{(h),(u) \in \cL(l_{\Theta,j};\, u)}
\frac{u_A\, \tilde{\kappa}\big(\tfrac{u_A}{h_A},
\tfrac{l_{A,\Theta,j}}{h_A}\big)+u_B\,
\big[\tilde{\kappa}\big(\tfrac{u_B}{h_B},
\tfrac{l_{B,\Theta,j}}{h_B}\big)
+\tfrac{\beta-\alpha}{2}\big]+u_I\, \phi(\tfrac{u^I}{h^I})}{u}.
\end{align}
Proposition~\ref{convunifr} below states that $\lim_{L\to\infty} \psi_L(\Theta,u,j) 
=\psi(\Theta,u,j)$ uniformly in $(\Theta,u) \in \cV_{\AB,M}^{*,\,m}$ and $j\in \cS_{\Theta,u}$.  

\begin{proposition}
\label{convunifr}
For every $M,m\in \N$ such that $m\geq M+2$ and every $\gep>0$ there exists an $L_\gep\in \N$ such that 
\be{une int}
\big|\psi_L(\Theta,u,j)-\psi(\Theta,u,j)\big|
\leq \gep \quad \forall\,(\Theta,u)\in \cV^{*,\,m}_{\AB,L,M},\  \,
j\in \cS_{\Theta,u},\ \, L\geq L_\gep.
\ee
\end{proposition}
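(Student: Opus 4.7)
The plan is to establish uniform convergence in three stages: (i) reduce the uniformity statement to a finite combinatorial skeleton plus a compact set of continuous parameters; (ii) prove pointwise convergence by matching upper and lower bounds obtained from a path-decomposition argument that invokes Propositions \ref{lementr} and \ref{l:feinflim}; (iii) upgrade pointwise to uniform convergence via equicontinuity.

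\textbf{Reduction to a finite skeleton.} Since any $\pi \in \cW_{\Theta,u,L,j}$ with $(\Theta,u) \in \cV^{*,\,m}_{\AB,L,M}$ stays in the strip of blocks indexed by $\{-m+1,\dots,m-1\}$, it only sees the restriction $\chi|_{\{-m+1,\dots,m-1\}}$, which takes $2^{2m-1}$ values. Likewise $\Delta\Pi \in \{-M,\dots,M\}$ is finite, and $j \in \cup_{r\leq m}\cS_r$ with coordinates in a bounded integer range gives finitely many possibilities. Hence the skeleton $(\chi|_{\{-m+1,\dots,m-1\}}, \Delta\Pi, j)$ ranges over a finite index set $\cF_{M,m}$, and only the continuous parameters $(b_0,b_1,u)$ remain, lying in the compact rectangle $[0,1]^2\times[1,m]$ (with $u\in t_\Theta+\tfrac{2\N}{L}$).

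\textbf{Pointwise convergence.} Fix a skeleton and $(b_0,b_1,u)$. Any $\pi \in \cW_{\Theta,u,L,j}$ admits a natural decomposition into three kinds of sub-paths: (a) bulk segments strictly inside an $A$-region, (b) bulk segments strictly inside a $B$-region, and (c) segments that hug one of the interfaces $\cI_{j_i}$, making excursions on both sides. Tracking the discrete data $(h_A,h_B,h^\cI)\in(\N_0/L)^3$ with $h_A+h_B+h^\cI=1$ and $(a_A,a_B,a^\cI)\in(\N_0/L)^3$ with $a_A+a_B+a^\cI=u$, one sees that only polynomially many decompositions arise, so their enumeration contributes $O((\log L)/L)$ to $\psi^\omega_L$. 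For each decomposition, the bulk segment counts are controlled by $|\cW_L(\cdot,\cdot)|$ via Proposition \ref{lementr}, and the interface-hugging segments by the single-interface partition function of Proposition \ref{l:feinflim}; the $B$-bulk contribution carries the energy shift $\tfrac{\beta-\alpha}{2}$ via the law of large numbers for $\omega$. Taking expectations and optimizing over the decomposition yields $\psi_L(\Theta,u,j)\leq \psi(\Theta,u,j)+o(1)$. The matching lower bound is obtained by fixing a near-optimal $(h),(a)\in \cL(l_{\Theta,j};u)$ with rational coordinates approximable on the scale $1/L$, concatenating bulk and interface sub-trajectories that saturate Propositions \ref{lementr} and \ref{l:feinflim}, and invoking the $L^1$-convergence in \eqref{fesainf} to replace $\phi^{\omega,\cI}_L$ by $\phi^\cI$ at each interface location.

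\textbf{Uniformity and main obstacle.} Uniformity over the skeleton $\cF_{M,m}$ is automatic since this set is finite. Uniformity in $(b_0,b_1,u)$ on the compact rectangle reduces to equicontinuity of both $\psi_L(\Theta,u,j)$ and the limit $\psi(\Theta,u,j)$. Continuity of the limit follows from continuity of $\tilde\kappa$ in $(u,l)$ (Proposition \ref{lementr} and \eqref{kapexplform}), continuity of $\mu\mapsto\mu\phi^\cI(\mu)$ on $[1,\infty)$ (a consequence of its concavity), and continuity of the distances $l_{A,\Theta,j}, l_{B,\Theta,j}$ in $(b_0,b_1)$ via \eqref{defl}--\eqref{bl1}. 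Equicontinuity of $\{\psi_L(\Theta,u,j)\}_L$ follows from the uniform bound \eqref{boundel} together with the elementary observation that inserting or deleting $O(1)$ steps near the entrance, the exit, or a single interface crossing alters $\log Z^\omega_L$ by at most $O(1)$, hence $\psi_L$ by at most $O(1/L)$. The main obstacle is the joint control of the interface contributions: a path may hug several distinct interfaces whose $\omega$-strips are interleaved with those of the bulk excursions, so to pass from the $L^1$ single-interface statement of Proposition \ref{l:feinflim} to a collective bound one must split the horizontal axis into disjoint sub-intervals associated with each sub-path, apply Proposition \ref{l:feinflim} on each, and absorb the $o(L)$ corrections at junctions into the polynomial enumeration above. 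Once this is done, the $o(1)$ error is uniform in the skeleton (being explicit and depending only on $M,m$) and in the continuous parameters (by equicontinuity on the compact rectangle), completing the proof.
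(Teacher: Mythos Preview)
Your overall architecture (finite skeleton, path decomposition into $A$-bulk, $B$-bulk and interface pieces, then uniformity) matches the paper's, but the upper bound contains a genuine gap at ``taking expectations and optimizing over the decomposition''. You have $\psi_L(\Theta,u,j)=\E_\omega\big[\tfrac{1}{uL}\log\sum_{\text{fine decomps}}A^\omega\big]$, hence $\psi_L\leq o(1)+\tfrac{1}{uL}\,\E_\omega\big[\max_{\text{decomps}}\log A^\omega\big]$, and the interchange of $\E_\omega$ with the maximum over the polynomially many decompositions is \emph{not} free. Neither the law of large numbers for $H_t^{\theta^s(\omega)}(B)$ nor the $L^1$-convergence in Proposition~\ref{l:feinflim} for $\phi^{\theta^s(\omega),\cI}_d$ gives simultaneous control over all shifts $s$ and all sizes $t,d$ that can arise. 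One needs exponential concentration plus a union bound: this is precisely the paper's Step~1, where events $\cA_{\gep,n},\cB_{\gep,n}$ collecting all such deviations are shown to have vanishing probability via Cram\'er's theorem and the concentration inequality of Appendix~\ref{Ann2}, while the complement is handled by the uniform bound \eqref{boundel}. Without this device your upper bound does not close.

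Two further points. First, passing from the per-stretch data $(d_i,t_i,d_i^\cI,t_i^\cI)$ to the six aggregates $(h_A,h_B,h^\cI,a_A,a_B,a^\cI)$ requires the concavity of $(u,l)\mapsto u\tilde\kappa(u,l)$ and of $\mu\mapsto\mu\phi^\cI(\mu)$ (this is the content of the paper's Step~2), which you do not invoke. Second, your equicontinuity claim ``inserting or deleting $O(1)$ steps alters $\log Z^\omega_L$ by $O(1)$'' is too quick in the deletion direction: removing two steps while keeping the endpoint is a many-to-one map whose multiplicity must be controlled (compare Lemma~\ref{convularge3} for the entropy analogue). The paper avoids equicontinuity entirely by establishing uniformity at each stage of a chain $\psi_1\prec\psi_2\prec\psi_3\prec\psi_4$, using the \emph{uniform} convergence Lemmas~\ref{conunif1} and~\ref{l:feinflim1} for $\tilde\kappa_L$ and $\phi^\cI_L$ rather than only the pointwise Propositions~\ref{lementr}--\ref{l:feinflim}.
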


\medskip\noindent 
{\bf Proof of Propositions~\ref{convfree1} and \ref{energ}--\ref{energ1} subject 
to Proposition~\ref{convunifr}}. Pick $\gep>0$, $L\in \N$ and $(\Theta,u)\in
\cV^{*,\,m}_{\AB,L,M}$. Recall \eqref{bl} and note that $l_A(\Theta)L$ and $l_B(\Theta)L$ 
are the minimal vertical distances the trajectories of $\cW_{\Theta,u,L}$ have to 
cross in blocks of type $A$, respectively, $B$. For simplicity, in what follows 
the $\Theta$-dependence of $l_A$ and $l_B$ will be suppressed. In other words, 
$l_A$ and $l_B$ are the two coordinates of $l_{\Theta,f}$ (recall \eqref{bl1}) 
with $f=(1,2,\dots,|k_{\Theta}|)$ when $\Delta\Pi\geq 0$ and $f=(0,-1,\dots,
-|k_{\Theta}|+1)$ when $\Delta\Pi< 0$, so \eqref{Bloc of type I} and 
\eqref{Bloc of type I1} imply
\be{pisdefalt}
\psi_{\AB}(u,l_A,l_B)= \psi(\Theta,u,f).
\ee
Hence Propositions~\ref{convfree1} and \ref{energ} will be proven once we show 
that $\lim_{L\to\infty} \psi_L(\Theta,u)=\psi(\Theta,u,f)$ uniformly in $(\Theta,u)
\in\cV^{*,\,m}_{\AB,L,M}$. Moreover, a look at \eqref{Bloc of type I1}, \eqref{pisdefalt}
and \eqref{Bloc of type I} allows us to assert that for every $j\in \cS_{\Theta,u}$ 
we have $\psi(\Theta,u,j)\leq \psi(\Theta,u,f)$. The latter is a consequence of 
the fact that $l\mapsto \tilde{\kappa}(u,l)$ decreases on $[0,u-1]$ (see 
Lemma~\ref{l:lemconv2}(ii) in Appendix~\ref{Path entropies}) and that 
\begin{align}
\label{add8}
\nonumber 
l_{A}&=l_{A,\Theta,f}=\min\{l_{A,\Theta,j} \colon\, j\in \cS_{\Theta,u}\},\\
l_{B}&=l_{B,\Theta,f}=\min\{l_{B,\Theta,j} \colon\, j\in \cS_{\Theta,u}\}.
\end{align}

By applying Proposition~\ref{convunifr} we have, for $L\geq L_\gep$, 
\begin{align}
\label{eq:restore}
\nonumber 
\psi_L(\Theta,u,j)
&\leq \psi(\Theta,u,f)+\gep \qquad \forall\,(\Theta,u)\in \cV^{*,\,m}_{\AB,L,M},\ 
\forall\,j\in \cS_{\Theta,u},\\
\psi_L(\Theta,u,f)
&\geq \psi(\Theta,u,f)-\gep \qquad \forall\,(\Theta,u)\in \cV^{*,\,m}_{\AB,L,M}.
\end{align} 
The second inequality in \eqref{eq:restore} allows us to write, for $L\geq L_\gep$, 
\be{inegleft}
\psi(\Theta,u,f)-\gep\leq \psi_L(\Theta,u,f)\leq \psi_L(\Theta,u) \qquad \forall\,
(\Theta,u)\in \cV^{*,\,m}_{\AB,L,M}.
\ee
To obtain the upper bound we introduce
\be{cAdef}
\cA_{L,\gep}=\Big\{\omega\colon\, |\psi^\omega_L(\Theta,u,j)-\psi_L(\Theta,u,j)|
\leq \gep \ \quad \forall\, 
(\Theta,u)\in \cV^{*,\,m}_{\AB,L,M},\, \forall\,j\in \cS_{\Theta,u}\Big\},
\ee
so that 
\begin{align}
\label{refre}
\psi_L(\Theta,u)
&\leq \E\big[ 1_{\cA^c_{L,\gep}}\, \psi^\omega_L(\Theta,u)\big]
+ \E\big[ 1_{\cA_{L,\gep}}\, \psi^\omega_L(\Theta,u)\big]\\
\nonumber 
&\leq C_{\text{uf}}(\alpha)\,\P(\cA_{L,\gep}^c)+\tfrac{1}{uL}
\E\Big[ 1_{\cA_{L,\gep}}\,\log {\textstyle \sum_{j\in  \cS_{\Theta,u}}}\, 
e^{uL (\psi_L(\Theta,u,j)+\gep)}\Big],
\end{align}
where we use \eqref{boundel} to bound the first term in the right-hand side, 
and the definition of $\cA_{L,\gep}$ to bound the second term. Next, with the help 
of the first inequality in \eqref{eq:restore} we can rewrite \eqref{refre} for 
$L\geq L_\gep$ and $(\Theta,u)\in \cV^{*,\,m}_{\AB,L,M}$ in the form
\begin{align}
\label{restora}
\psi_L(\Theta,u)&\leq  C_{\text{uf}}(\alpha)\, \P(\cA_{L,\gep}^c) 
+\tfrac{1}{uL}\log |\cup_{r=1}^{m} \cS_{r}|+ \psi(\Theta,u,f)+2\gep.
\end{align} 
At this stage we want to prove that $\lim_{L\to\infty} \P(\cA^c_{L,\gep})=0$. To 
that end, we use the concentration of measure property in \eqref{concmesut} in
Appendix~\ref{Ann2} with $l=uL$, $\Gamma=\cW_{\Theta,u,L,j}$, $\eta=\gep uL$, 
$\xi_i=-\alpha 1\{\omega_i=A\} +\beta 1\{\omega_i=B\}$ for all $i\in\N$ and 
$T(x,y)=1\{\chi^{L_n}_{(x,y)}=B\}$. We then obtain that there exist $C_{1},C_{2}>0$ 
such that, for all $L\in \N$,  $(\Theta,u)\in\cV^{*,\,m}_{\AB,L,M}$ and 
$j\in\cS_{\Theta,u}$, 
\be{rec222}
\P\big( |\psi^\omega_L(\Theta,u,j)-\psi_L(\Theta,u,j)|> \gep\big)
\leq C_{1} \, e^{-C_{2}\, \gep^2\, uL}.
\ee
The latter inequality, combined with the fact that $|\cV^{*,\,m}_{\AB,L,M}|$ grows 
polynomialy in $L$, allows us to assert that $\lim_{L\to\infty} \P(\cA^c_{L,\gep})=0$. 
Next, we note that $|\cup_{r=1}^{m} \cS_{r}|<\infty$, so that for $L_\gep$ large enough 
we obtain from \eqref{restora} that, for $L\geq L_\gep$,
\be{restora1}
\psi_L(\Theta,u) \leq \psi(\Theta,u,f)+3\gep \qquad \forall\,
(\Theta,u)\in \cV^{*,\,m}_{\AB,L,M}.
\ee 
Now \eqref{inegleft} and \eqref{restora1} are sufficient to complete the 
proof of Propositions~\ref{convfree1}--\ref{energ}. The proof of 
Proposition~\ref{energ1} follows in a similar manner after minor modifications.
\hspace*{\fill}$\square$


\subsection{Structure of the proof of Proposition \ref{convunifr}}
\label{convunifra}

{\bf Intermediate column free energies.}
Let 
\be{add9}
G_M^{\,m}=\big\{(L,\Theta,u,j)\colon\,
(\Theta,u)\in \cV^{*,\,m}_{\AB,L,M},\,  j\in \cS_{\Theta,u}\big\},
\ee 
and define the following order relation. 

\begin{definition}
For $g,\widetilde{g}\colon\,G_M^{\,m}\mapsto \R$, write $g\prec \widetilde{g}$ when  
for every $\gep>0$ there exists an $L_\gep\in \N$ such that
\be{reparco}
g(L,\Theta,u,j)\leq \widetilde{g}(L,\Theta,u,j)
+\gep \qquad \forall\,(L,\Theta,u,j)\in G_M^{\,m}\colon\, L\geq L_\gep.
\ee
\end{definition}

Recall \eqref{partfunc3} and  \eqref{Bloc of type I1}, set
\be{psidefvar}
\psi_1(L,\Theta,u,j) = \psi_L(\Theta,u,j), \qquad
\psi_4(L,\Theta,u,j)=\psi(\Theta,u,j),
\ee 
and note that the proof of Proposition~\ref{convunifr} will be complete 
once we show that $\psi_1\prec \psi_4$ and $\psi_4\prec \psi_1$. In what 
follows, we will focus on $\psi_1\prec\psi_4$. Each step of the proof can 
be adapted to obtain $\psi_4\prec\psi_1$ without additional difficulty. 

In the proof we need to define two intermediate free energies $\psi_2$ 
and $\psi_3$, in addition to $\psi_1$ and $\psi_4$ above. Our proof is divided 
into 3 steps, organized in Sections \ref{s1}--\ref{s3}, and consists of showing 
that $\psi_1\prec \psi_2\prec \psi_3\prec \psi_4$. 

\medskip\noindent 
{\bf Additional notation.} 
Before stating Step 1, we need some further notation. First, we partition 
$\cW_{\Theta,u,L,j}$ according to the total number of steps and the number 
of horizontal steps made by a trajectory along and in between $AB$-interfaces. 
To that end, we assume that $j\in \cS_{\Theta,u,r}$ with $r\in \{1,\dots,m\}$, we 
recall \eqref{defl} and we let 
\begin{align}
\label{setindice}
\nonumber 
\cD_{\Theta,L,j}
&=\big\{(d_i,t_i)_{i=1}^{r+1}\colon\, d_i\in\N\,\,\text{and}\,\,
t_i\in d_i+l_i L+2\N_0\,\,\forall\,1\leq i\leq r+1\big\},\\
\cD_{r}^\cI&=\big\{(d_i^\cI,t_i^\cI)_{i=1}^r\colon\, 
d_i^\cI \in\N\,\,\text{and}\,\,t_i^\cI\in d_i^\cI+2\N_0\,\,\forall\, 1\leq i\leq r\big\},
\end{align}
where $d_i,t_i$ denote the number of horizontal steps and the total number of 
steps made by the trajectory between the $(i-1)$-th and $i$-th interfaces, and
$d_i^\cI,t_i^\cI$ denote the number of horizontal steps and the total number of 
steps made by the trajectory along the $i$-th interface. For $(d,t)\in 
\cD_{\Theta,L,j}$, $(d^\cI,t^\cI)\in \cD_{r}^\cI$ and $1\leq i\leq r$, we set 
$T_0=0$ and
\begin{align}
\label{defU}
\nonumber 
V_{i}
&=\sum_{j=1}^i t_j+\sum_{j=1}^{i-1} t_j^{\cI}, \qquad i = 1,\dots,r,\\
T_{i}
&=\sum_{j=1}^{i} t_j+\sum_{j=1}^{i} t_j^{\cI}, \qquad i = 1,\dots,r,
\end{align}
so that $V_i$, respectively, $T_i$ indicates the number of steps made by the trajectory 
when reaching, respectively, leaving the $i$-th interface. 

Next, we let $\theta\colon\,\R^\N\mapsto\R^\N$ be the left-shift acting on infinite 
sequences of real numbers and, for $u\in \N$ and $\omega\in \{A,B\}^\N$, we put
\be{defB}
H_u^\omega(B)=\sum_{i=1}^u \big[\beta\,
1_{\{\omega_i=B\}}-\alpha\,1_{\{\omega_i=A\}}\big].
\ee
Finally, we recall that 
\begin{equation}
\label{psi1}
\psi_1(L,\Theta,u,j)=\tfrac{1}{ uL}\,\mathbb{E}[\log Z_1^{\omega}(L,\Theta,u,j)],
\end{equation}
where the partition function defined in \eqref{partfunc2} has been renamed $Z_1$ and
can be written in the form
\be{defZ}
Z^{\omega}_1(L,\Theta,u,j)=\sum_{(d,t)\in \cD_{\Theta,L,j}}\, 
\sum_{(d^\cI,t^\cI)\in \cD_{r}^\cI} A_1\,B_1\,C_1,
\ee
where (recall \eqref{setofA} and \eqref{feinf})
\begin{align}
\label{defA}
A_1 &=\prod_{i\in \cA_{\Theta,j}}\, 
e^{t_{i}\, \tilde{\kappa}_{d_i}\big(\tfrac{t_{i}}{d_i},\,
\tfrac{l_{i} L}{d_i}\big)}\,  
\prod_{i\in \cB_{\Theta,j}}\,e^{t_{i}\,
\tilde{\kappa}_{d_i}\big(\tfrac{t_{i}}{d_i},\,
\tfrac{ l_{i} L}{d_i}\big)}\,e^{H^{\theta^{T_{i-1}}(w)}_{t_{i}}(B)},\\
\nonumber 
B _1&=\prod_{i=1}^{r}\,e^{t_{i}^\cI\,\phi^{\theta^{V_{i}}(w)}_{d^{\cI}_{i}}
\big(\tfrac{t^{\cI}_{i}} {d_{i}^\cI}\big)},\\
\nonumber 
C _1&= \ind_{\big\{\sum_{i=1}^{r+1} d_i+\sum_{i=1}^{r} d_i^\cI=L\big\}}\, 
\ind_{\big\{\sum_{i=1}^{r+1} t_i+\sum_{i=1}^{r} t_i^\cI=u L\big\}}.
\end{align}
It is important to note that a simplification has been made in the term $A_1$ in 
\eqref{defA}. Indeed, this term is not $\tilde{\kappa}_{d_i}(\cdot,\cdot)$ 
defined in \eqref{ttrajblock}, since the latter does not take into account 
the vertical restrictions on the path when it moves from one interface to 
the next. However, the fact that two neighboring $AB$-interfaces are necessarily 
separated by a distance at least $L$ allows us to apply Lemma~\ref{conunifalt1} 
in Appendix~\ref{A.3}, which ensures that these vertical restrictions can be
removed at the cost of a negligible error.

To show that $\psi_1\prec\psi_2\prec\psi_3\prec\psi_4$, we fix $(M,m)\in \EIGH$ and $\gep>0$, 
and we show that there exists an $L_\gep\in \N$s such that $\psi_k(L,\Theta,u,j)
\leq \psi_{k+1}(L,\Theta,u,j)+\gep$ for all $(L,\Theta,u,j)\in G_M^{\,m}$ and 
$L\geq L_\gep$. The latter will complete the proof of Proposition~\ref{convunifr}.


\subsection{Step 1}
\label{s1}

In this step, we remove the $\omega$-dependence from $Z_1^{\,\omega}(L,\Theta,u,j)$. 
To that aim,  we put
\be{psiLTdef}
\psi_2(L,\Theta,u,j)=\frac{1}{ uL} \log Z_2(L,\Theta,u,j)
\ee 
with
\be{Ztilde}
Z_2(L,\Theta,u,j) = \sum_{(d,t)\in \cD_{\Theta,L,j}}\, 
\sum_{(d^\cI,t^\cI)\in \cD_{r}^\cI} A_2 \  B_2 \  C_2,
\ee
where
\begin{align}
\label{defA1}
A_2 &= \prod_{i\in \cA_{\Theta,j}}\,e^{t_{i}\, \tilde{\kappa}_{d_i}
\big(\tfrac{t_{i}}{d_i},\,\tfrac{ l_{i} L}{d_i}\big)}
\prod_{i\in \cB_{\Theta,j}}\,e^{t_{i}\,\tilde{\kappa}_{d_i}
\big(\tfrac{t_{i}}{d_i},
\,\tfrac{ l_{i} L}{d_i}\big)}\,e^{\tfrac{\beta-\alpha}{2}\, t_{i}},\\
\nonumber 
B_2 &= \prod_{i=1}^{r}\,e^{t_{i}^\cI\,\phi_{d^\cI_{i}}
\Big(\tfrac{t^\cI_{i}}{d^\cI_{i}}\Big)},\\ 
\nonumber 
C_2 &= C_1.
\end{align}
Next, for $n\in \N$ we define 
\begin{align}
\label{subset1}
\nonumber 
\cA_{\gep,n} &= \Big\{\exists\,0\leq t,s\leq n\colon\, t\geq \gep n, 
\,\big|H_t^{\theta^s(\omega)}(B)-\tfrac{\beta-\alpha}{2} t\big|>\gep t\Big\},\\
\cB_{\gep,n} &= \Big\{\exists\,0\leq t,d,s\leq n\colon\,t\in d+2\N_0,\, 
t\geq \gep n,\,\big|\phi^{\theta^s(w)}_d(\tfrac{t}{d})-\phi_d 
(\tfrac{t}{d})\big|>\gep \Big\}.
\end{align}
By applying Cram\'er's theorem for i.i.d.\ random variables (see e.g.\ den 
Hollander~\cite{dH}, Chapter 1), we obtain that there exist $C_{1}(\gep),C_{2}(\gep)>0$ 
such that
\be{rec}
\P\big(\big|H_t^{\theta^s(w)}(B)-\tfrac{\beta-\alpha}{2} t \big|>\gep t\big)
\leq C_{1}(\gep)\, e^{-C_2(\gep) t}, \qquad t,s\in \N.
\ee
By using the concentration of measure property in \eqref{concmesut} in Appendix~\ref{Ann2} 
with $l=t$, $\Gamma=\cW^\cI_{d}(\tfrac{t}{d})$, $T(x,y)=1\{(x,y)< 0\}$, 
$\eta=\gep t$ and $\xi_i=-\alpha 1\{\omega_i=A\} +\beta 1\{\omega_i=B\}$ for 
all $i\in\N$, we find that there exist $C_{1},C_{2}>0$ such that
\be{rec22}
\P\big(\big|\phi^{\theta^s(w)}_d(\tfrac{t}{d})-\phi_d(\tfrac{t}{d})\big|>\gep \big|\big)
\leq C_{1} \, e^{-C_{2}\, \gep^2 t}, \qquad t,d,s \in\N,\,t\in d+2\N_0. 
\ee
With the help of \eqref{boundel} and \eqref{psi1} we may write, for $(L,\Theta,u,j)
\in G_M^{\,m}$,
\be{boundZ}
\psi_1(L,\Theta,u,j)\leq C_{\text{uf}}(\alpha)\,
\P\big(\cA_{\gep,m L}\cup \cB_{\gep,m L}\big)
+\tfrac{1}{uL}\,\E\big[1_{\{\cA^c_{\gep,m L}\cap\cB^c_{\gep,m L}\}}\,
\log Z_1^\omega(L,\Theta,u,j)\big].
\ee
With the help of \eqref{rec} and \eqref{rec22}, we get that $\P(\cA_{\gep,m L})\to 0$ 
and $\P(\cB_{\gep,m L})\to 0$ as $L\to \infty$. Moreover, from (\eqref{defZ}-\eqref{subset1}) 
it follows that, for $(L,\Theta,u,j)\in G_M^{\,m}$ and $\omega\in\cA^c_{\gep,m L}\cap
\cB^c_{\gep,ML}$, 
\be{boundZalt}
Z^\omega_1(L,\Theta,u,j)\leq Z_2(L,\Theta,u,j)\,e^{\gep u L}.
\ee
The latter completes the proof of $\psi_1\prec\psi_2$. 


\subsection{Step 2}

In this step, we concatenate the pieces of trajectories that travel in $A$-blocks, 
respectively, $B$-blocks, respectively, along the $AB$-interfaces and replace the 
finite-size entropies and free energies by their infinite-size counterparts. Recall 
the definition of $l_{A,\Theta,j}$ and $l_{B,\Theta,j}$ in \eqref{bl1} and define, for 
$(L,\Theta,u,j)\in G_M^{\,m}$, the sets  
\begin{align}
\cJ_{\Theta,L,j}
&=\Big\{\big(a_A,h_A,a_B,h_B\big)\in \N^4\colon\, a_A\in l_{A,\Theta,j} L+h_A+2\N_0,\, 
a_B\in l_{B,\Theta,j} L+h_B+2 \N_0\Big\},\\
\nonumber 
\cJ^\cI
&=\Big\{\big(a^\cI,h^\cI\big)\in \N^2\colon\, a^\cI\in h^\cI+ 2\N_0\Big\},
\end{align}
and put $\psi_3(L,\Theta,u,j)=\frac{1}{ uL} \log Z_3(L,\Theta,u,j)$ with
\begin{align}
\label{fatZ}
Z_3(L,\Theta,u,j)=\sum_{(a,h)\in \cJ_{\Theta,L,j}} 
&\sum_{(a^\cI,h^\cI)\in \cJ^\cI} A_3\,B_3\,C_3,
\end{align}
where
\begin{align}
\label{defAhat1}
\nonumber 
A_3 &= e^{a_A\,\tilde{\kappa}\Big(\tfrac{a_A}{h_A},\,\tfrac{l_{A,\Theta,j} L}{h_A}\Big)}\, 
e^{a_B\, \tilde{\kappa}\Big(\tfrac{a_B}{h_B},\,\tfrac{l_{B,\Theta,j} L}{h_B}\Big)}\, 
e^{\tfrac{\beta-\alpha}{2}\, a_B},\\
\nonumber 
B_3 &= e^{a^\cI\, \phi \big(\tfrac{a^\cI}{h^\cI}\big)},\\
C_3 &= 1_{\{a_A+a_B+a^\cI=uL\}}\,1_{\{h_A+h_B+h^\cI=L\}}.
\end{align}
In order to establish a link between $\psi_2$ and $\psi_3$ we define, for $(a,h)\in
\cJ_{\Theta,L,j}$ and $(a^\cI,h^\cI)\in \cJ^\cI$,
\begin{align}
\label{recod}
\nonumber 
\cP_{(a,h)} &= \big\{(t,d)\in \cD_{\Theta,L,j}\colon\,
\textstyle\sum_{i\in \cA_{\Theta,j}} (t_{i},d_{i})=(a_A,h_A),\, 
\sum_{i\in \cB_{\Theta,j}} (t_{i},d_{i})=(a_B,h_B)\big\},\\
\cQ_{(a^\cI,h^\cI)}&=\big\{(t^\cI,d^\cI)\in \cD_{r}^\cI\colon\,
\textstyle \sum_{i=1}^{r}(t^\cI_{i},d^\cI_{i})=(a^\cI,h^\cI)\big\}.
\end{align}
Then we can rewrite $Z_2$ as 
\begin{align}
\label{fatZT}
Z_2(L,\Theta,u,j)=\sum_{(a,h)\in \cJ_{\Theta,L,j}} 
&\sum_{(a^\cI,h^\cI)\in \cJ^\cI} C_3
\sum_{(t,d)\in \cP_{(a,h)}}
\sum_{(t^\cI,d^\cI)\in \cQ_{(a^\cI,h^\cI)}} A_2\,B_2.
\end{align}

To prove that $\psi_2\prec \psi_3$, we need the following lemma.

\begin{lemma}
\label{tesla3}
For every $\eta>0$ there exists an $L_\eta\in \N$ such that, for every $(L,\Theta,u,j)
\in G_M^{\,m}$ with $L\geq L_\eta$ and every $(d,t)\in \cD_{\Theta,L,j}$ and 
$(d^\cI,t^\cI)\in \cD_{r}^\cI$ satisfying $\sum_{i=1}^{r+1} d_i+\sum_{i=1}^{r} 
d_i^\cI=L$ and $\sum_{i=1}^{r+1} t_i+\sum_{i=1}^{r} t_i^\cI=u L$,
\begin{align}
\label{necbound}
t_i\,\tilde{\kappa}\big(\tfrac{t_i}{d_i},\,\tfrac{l_i L}{d_i}\big)-\eta uL 
&\leq t_i\,\tilde{\kappa}_{d_i}\big(\tfrac{t_i}{d_i},\,\tfrac{l_i L}{d_i}\big)
\leq  t_i\,\tilde{\kappa}\big(\tfrac{t_i}{d_i},\,\tfrac{l_i L}{d_i}\big)
+\eta uL \quad i = 1,\dots,r+1,\\
\nonumber
t_i^\cI \phi(\tfrac{t_i^\cI}{d_i^\cI})-\eta uL 
&\leq t_i^\cI \phi_{d^\cI_i}(\tfrac{t_i^\cI}{d_i^\cI})
\leq t_i^\cI \phi(\tfrac{t_i^\cI}{d_i^\cI})+\eta uL \quad i = 1,\dots,r.
\end{align}
\end{lemma}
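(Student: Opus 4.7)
The strategy is to combine the pointwise convergences of Propositions~\ref{lementr} and~\ref{l:feinflim} with a size-based case analysis on each piece, using the universal bounds $|\tilde\kappa|,|\tilde\kappa_L|\leq\log 3$ (from \eqref{conventr}) and $|\phi|,|\phi_L|\leq C_{\text{uf}}(\alpha)$ (from \eqref{boundel}) to absorb the regime where a piece is negligibly small, and using a Dini-type uniform convergence argument elsewhere.

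\textbf{Upper bounds are essentially free.} The upper bound on $\tilde\kappa_{d_i}$ is immediate from \eqref{conventr}, which writes $\tilde\kappa$ as a supremum: $\tilde\kappa_{d_i}(\cdot)\le\tilde\kappa(\cdot)$ unconditionally. The upper bound on $\phi_{d_i^\cI}$ requires a uniform convergence statement; I would derive it by combining Proposition~\ref{l:feinflim} (pointwise convergence $\phi_L\to\phi$) with the continuity of $\phi$ (a consequence of the concavity of $\mu\mapsto\mu\phi(\mu)$ established in the text just before Assumption~\ref{assu}) and the uniform boundedness $|\phi_L|\leq C_{\text{uf}}(\alpha)$.

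\textbf{Lower bounds via a case split.} Fix $\eta>0$ and set $\gep=\eta/(4\log 3)$. For each index $i$, split on the magnitude of $t_i$. If $t_i\leq\gep uL$, then
\[
t_i\,\bigl|\tilde\kappa_{d_i}(\tfrac{t_i}{d_i},\tfrac{l_iL}{d_i})-\tilde\kappa(\tfrac{t_i}{d_i},\tfrac{l_iL}{d_i})\bigr|\leq 2t_i\log 3\leq 2\gep uL\log 3\leq \tfrac{\eta}{2}uL,
\]
using only the universal bound, so both inequalities in \eqref{necbound} hold trivially. If $t_i>\gep uL$, I further sub-split on $d_i$. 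For $d_i\geq L_\eta$ (a threshold to be chosen), the arguments $(t_i/d_i,l_iL/d_i)$ lie in the compact set $\cK_\eta=\{(u',l')\in\cH\colon 1\leq u'\leq mL/L_\eta,\ 0\leq l'\leq mL/L_\eta\}$ (using $t_i\leq mL$, $l_i\leq m$, and $d_i\geq 1$), on which uniform convergence applies (see next paragraph). For $d_i<L_\eta$, only finitely many triples $(d_i,t_i,l_iL)$ with $t_i\leq mL$, $l_i\leq m$ arise up to the rescaling $L\to L$; pointwise convergence at each such triple is then enough once $L$ is chosen large. The same scheme, with Proposition~\ref{l:feinflim} replacing Proposition~\ref{lementr} and the one-dimensional compact $[1,m]\cap\mathbb{Q}$ in place of $\cK_\eta$, handles the $\phi_{d_i^\cI}$-inequality.

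\textbf{Main obstacle: uniform convergence on $\cK_\eta$.} The crucial missing ingredient is that $\tilde\kappa_L\to\tilde\kappa$ uniformly on each compact subset of $\cH\cap\mathbb Q^2$. I plan to deduce this from Dini's theorem: concatenation of trajectories shows that $k\mapsto\tilde\kappa_{kL}(u,l)$ is non-decreasing for each fixed $(u,l)\in\cH\cap\mathbb Q^2$ and each base $L$, and the limit $\tilde\kappa$ is continuous on $\cH$ by the explicit formula \eqref{kapexplform} and the known regularity of $\kappa,\hat\kappa$ from \cite{dHW06}; Dini then upgrades pointwise monotone convergence of continuous functions on a compact to uniform convergence. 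Summing the resulting bounds over the at most $r+1\leq m+1$ entropy pieces and the at most $r\leq m$ interface pieces yields the lemma, with $L_\eta$ chosen to exceed all the thresholds required in the various sub-cases.
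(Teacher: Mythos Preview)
Your overall structure (split on the size of $t_i$, then on the size of $d_i$) matches the paper's, but two of your three sub-cases do not close as written.

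\textbf{The ``compact set'' case leaks.} Your set $\cK_\eta=\{(u',l')\in\cH:1\le u'\le mL/L_\eta,\ |l'|\le mL/L_\eta\}$ depends on $L$: when $d_i\ge L_\eta$ with $L_\eta$ a \emph{fixed} threshold, the ratio $t_i/d_i$ can still be as large as $mL/L_\eta$. So there is no single compact on which to run a Dini argument. Even on a fixed compact, Dini does not apply directly: the functions $\tilde\kappa_{d}$ live on the varying lattices $\cH_{d}$, not on a common domain, and the monotonicity you invoke from concatenation holds only along multiples $d\mapsto kd$, not in $d$. The paper sidesteps all of this by invoking the already-proved uniform convergence Lemma~\ref{conunif1} (and its interface analogue Lemma~\ref{l:feinflim1}), which gives $|\tilde\kappa_d(u,l)-\tilde\kappa(u,l)|\le\eta$ for \emph{all} $(u,l)\in\cH_d$ once $d$ is large; no compactness is needed.

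\textbf{The ``finitely many triples'' case is false.} When $d_i<L_\eta$, the number of admissible $(d_i,t_i)$ still grows with $L$ (since $t_i$ ranges up to $mL$), so pointwise convergence is not enough. The paper handles this regime differently: it takes the threshold on $d_i$ to be a fixed \emph{fraction} $r_\eta L$ of $L$ (not a fixed integer), and then observes that if $d_i\le r_\eta L$ while $t_i$ is not small, the aspect ratio $t_i/d_i$ exceeds a fixed $v_\eta$, at which point \emph{both} $\tilde\kappa_{d_i}$ and $\tilde\kappa$ are already $\le\eta$ by Lemma~\ref{convularge} and Lemma~\ref{l:lemconv2}(ii--iii) (and similarly $\phi_{d_i^\cI},\phi^\cI\le\eta$ by Lemma~\ref{l:lemconv}(ii)). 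This ``entropy vanishes at large aspect ratio'' is the missing idea in your argument; once you have it, together with Lemma~\ref{conunif1}, the case split $d_i\gtrless r_\eta L$ and then $t_i\gtrless \eta uL/(2C_{\text{uf}})$ closes immediately.
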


\begin{proof}  
By using Lemmas~\ref{conunif1} and \ref{l:feinflim1} in Appendix~\ref{Path entropies},  
we have that there exists a $\tilde{L}_\eta\in \N$ such that, for $L\geq \tilde{L}_\eta$, 
$(u,l)\in \cH_L$ and $\mu\in 1+\frac{2\N}{L}$, 
\be{tesla}
|\tilde{\kappa}_L(u,l)-\tilde{\kappa}(u,l)| 
\leq \eta, \qquad |\phi^\cI_L(\mu)- \phi^\cI(\mu)|\leq \eta.
\ee 
Moreover, Lemmas~\ref{lementr}, \ref{l:lemconv2}(ii--iii), \ref{l:lemconv}(ii) and
\ref{l:feinflim1} ensure that there exists a $v_\eta>1$ such that, for $L\geq 1$, 
$(u,l)\in \cH_L$ with $u\geq v_\eta$ and $\mu\in 1+\frac{2\N}{L}$ with $\mu\geq v_\eta$,
\be{tesla2}
0\leq \tilde{\kappa}_L(u,l)\leq \eta, \qquad 
0\leq  \phi_L(\mu)\leq \eta.
\ee 
Note that the two inequalities in \eqref{tesla2} remain valid when $L=\infty$. Next, 
we set $r_\eta=\eta/(2 v_\eta C_{\text{uf}})$ and $L_\eta=\tilde{L}_\eta/r_\eta$, and 
we consider $L\geq L_\eta$. Because of the left-hand side of \eqref{tesla}, the two 
inequalities in the first line of \eqref{necbound} hold when $d_i \geq r_\eta L\geq 
\tilde{L}_\eta$. We deal with the case $d_i \leq r_\eta L$ by considering first the 
case $t_i\leq \eta u L/2 C_{\text{uf}}$, which is easy because $\tilde{\kappa}_{d_i}$ 
and $\tilde{\kappa}$ are uniformly bounded by $C_{\text{uf}}$ (see \eqref{boundel}). 
The case $t_i\geq \eta u L/2 C_{\text{uf}}$ gives $t_i/d_i\geq u v_\eta\geq v_\eta$, 
which by the left-hand side of \eqref{tesla2} completes the proof of the first line 
in \eqref{necbound}. The same observations applied to $t_i^\cI, d_i^\cI$ combined 
with the right-hand side of \eqref{tesla} and \eqref{tesla2} provide the two inequalities 
in the second line in \eqref{necbound}.
\end{proof}

To prove that $\psi_2\prec\psi_3$, we apply Lemma~\ref{tesla3} with $\eta=\gep/(2m+1)$ 
and we use \eqref{defA1} to obtain, for $L\geq L_{\gep/(2m+1)}$, $(d,t)\in\cD_{\Theta,L,j}$ 
and $(d^\cI,t^\cI)\in \cD_{r}^\cI$, 
\begin{align}
\label{defAA1}
A_2 &\leq \prod_{i\in \cA_{\Theta,j}} e^{ t_{i}\, \tilde{\kappa}
\big(\tfrac{t_{i}}{d_i}, \,\tfrac{ l_{i} L}{d_i}\big) 
+ \tfrac{\gep uL}{2m+1}} \prod_{i\in \cB_{\Theta,j}} 
\,e^{t_{i}\,\tilde{\kappa}\big(\tfrac{t_{i}}{d_i}, 
\,\tfrac{ l_{i} L}{d_i}\big) + t_i\,\tfrac{\beta-\alpha}{2} 
+ \tfrac{\gep uL}{2m+1}},\\
\nonumber 
B_2 &\leq \prod_{i=1}^{r}\, e^{t_{i}^\cI\,\phi\Big(\tfrac{t^\cI_{i}}{d^\cI_{i}}\Big)
+ \tfrac{\gep uL}{2m+1}}.
\end{align}
Next, we pick $(a,h)\in \cJ_{\Theta,L,j}$, $(a^\cI,h^\cI)\in \cJ^\cI$, $(t,d)\in
\cP_{(a,h)}$ and $(t^\cI,d^\cI)\in \cQ_{(a^\cI,h^\cI)}$, and we use the concavity 
of $(a,b)\mapsto a \tilde{\kappa}(a,b)$ and $\mu\mapsto \phi^\cI(\mu)$ (see 
Lemma~\ref{l:lemconv2} in Appendix~\ref{Path entropies} and Lemma~\ref{l:lemconv}
in Appendix~\ref{B}) to rewrite \eqref{defAA1} as
\begin{align}
\label{defAA2}
A_2 &\leq e^{a_A\,\tilde{\kappa}\big(\tfrac{a_A}{h_A},\,
\tfrac{l_{A,\Theta,j} L}{h_A}\big) + a_B \, \tilde{\kappa}\big(\tfrac{a_B}{h_B},\,
\tfrac{l_{B,\Theta,j} L}{h_B}\big) + \tfrac{\beta-\alpha}{2} a_B 
+ \tfrac{\gep (r+1)uL}{2m+1}} 
= A_3\,e^{\tfrac{\gep (r+1) u L}{2m+1}},\\
\nonumber
B_2 &\leq e^{a^\cI\, \phi^\cI\big(\tfrac{a^\cI}{h^\cI}\big)
+ \tfrac{\gep r uL}{2m+1}} = B_3\, e^{\tfrac{\gep r uL}{2m+1}}.
\end{align}
Moreover, $r$, which is the number of $AB$ interfaces crossed by the trajectories in 
$\cW_{\Theta,u,j,L}$, is at most $m$ (see \eqref{zto}), so that \eqref{defAA2} 
allows us to rewrite \eqref{fatZT} as
\begin{align}
\label{fatZTU}
Z_2(L,\Theta,u,j) \leq e^{\gep u L} \sum_{(a,h)\in \cJ_{\Theta,L,j}} 
&\sum_{(a^\cI,h^\cI)\in \cJ^\cI} C_3\,|\cP_{(a,h)}|\, |\cQ_{(a^\cI,h^\cI)}|\, A_3\,B_3.
\end{align}
Finally, it turns out that $|\cP_{(a,h)}|\leq (uL)^{8r}$ and $|\cQ_{(a^\cI,h^\cI)}|
\leq (uL)^{8r}$. Therefore, since $r\leq m$, \eqref{fatZ} and \eqref{fatZTU} allow 
us to write, for $(L,\Theta,u,j)\in G_M^{\,m}$ and $L\geq L_{\gep/2m+1}$,
\be{add10}
Z_2(L,\Theta,u,j)\leq (mL)^{16 m} Z_3(L,\Theta,u,j).
\ee
The latter is sufficient to conclude that $\psi_2\prec\psi_3$.


\subsection{Step 3}
\label{s3}

For every $(L,\Theta,u,j)\in G_M^{\,m}$ we have, by the definition of $\cL(l_{A,\Theta,j},l_{B,\Theta,j};u)$ 
in \eqref{defnu}, that $(a,h)\in \cJ_{\Theta,L,j}$ and $(a^\cI,h^\cI)\in \cJ^\cI$ 
satisfying $a_A+a_B+a^\cI=uL$ and $h_A+h_B+h^\cI=L$ also satisfy  
\be{innu}
\Big(\big(\tfrac{a_A}{L},\tfrac{a_B}{L}, 
\tfrac{a^\cI}{L}\big),\big(\tfrac{h_A}{L},\tfrac{h_B}{L},
\tfrac{h^\cI}{L}\big)\Big)\in \cL(l_{A,\Theta,j},l_{B,\Theta,j};u).
\ee
Hence, \eqref{innu} and the definition of $\psi_\cI$ in \eqref{Bloc of type I} ensure 
that, for this choice of $(a,h)$ and $(a^\cI,h^\cI)$,
\be{add11}
A_3 B_3\leq e^{uL \psi_\cI(u,\,l_{A,\Theta,j},\,l_{B,\Theta,j})}.
\ee
Because of $C_3$, the summation in \eqref{fatZ} is restricted to those $(a,h)\in
\cJ_{\Theta,L,j}$ and $(a^\cI,h^\cI)\in \cJ^\cI$ for which $a_A,a_B,a^\cI \leq uL$ 
and $h_A,h_B,h^\cI\leq L$. Hence, the summation is restricted to a set of cardinality 
at most $(uL)^3 L^3$. Consequently, for all $(L,\Theta,u,j)\in G_M^{\,m}$ we have
\begin{align}
\label{fatZalt}
Z_3(L,\Theta,u,j)&=\sum_{(a,h)\in \cJ_{\Theta,L,j}} 
\sum_{(a^\cI,h^\cI)\in \cJ^\cI}  A_4\, B_4\, C_4\leq (m L)^3 L^3\, 
e^{u\,L\, \psi_\cI(u,\,l_{A,\Theta,j},\,l_{B,\Theta,j})}.
\end{align}
The latter implies that $\psi_3\prec \psi_4$ since $\psi_4=\psi_\cI(u,\,l_{A,\Theta,j},
\,l_{B,\Theta,j})$ by definition (recall \eqref{Bloc of type I1} and \eqref{psidefvar}).

\section{Proof of Theorem \ref{varformula}}
\label{proofofgene}

This section is technically involved because it goes through a sequence of approximation
steps in which the self-averaging of the free energy with respect to $\omega$ and $\Omega$ 
in the limit as $n\to\infty$ is proven, and the various ingredients of the variational 
formula in Theorem~\ref{varformula} that were constructed in Section~\ref{keyingr} are 
put together.

In Section~\ref{strprTh} we introduce additional notation and state Propositions~\ref{pr:formimp}, 
\ref{pr:formimpp} and \ref{varar} from which Theorem~\ref{varformula} is a straightforward 
consequence. Proposition~\ref{pr:formimp}, which deals with $(M,m)\in\EIGH$, is proven in 
Section~\ref{prooffor} and the details of the proof are worked out in 
Sections~\ref{s22}--\ref{s23alt}, organized into 5 Steps that link intermediate free 
energies. We pass to the limit $m\to\infty$ with Propositions~\ref{pr:formimpp} and 
\ref{pr:varar} which are proven in Section~\ref{sMinf} and \ref{svarar}, respectively.


\subsection{Proof of Theorem~\ref{varformula}}
\label{strprTh}


\subsubsection{Additional notation}

Pick $(M,m)\in \EIGH$ and recall that $\Omega$ and $\omega$ are independent, i.e., 
$\P=\P_\omega \times \P_\Omega$. For $\Omega\in \{A,B\}^{\N_0\times\Z}$, $\omega\in 
\{A,B\}^\N$, $n\in \N$ and $(\alpha,\beta)\in \CONE$, define
\begin{equation}
\label{intermp}
f^{\omega,\Omega}_{1,n}(M,m;\alpha,\beta)
= \tfrac{1}{n} \log Z_{1,n,L_n}^{\,\omega,\Omega}(M,m)
\quad\text{with}\quad
Z_{1,n,L_n}^{\,\omega,\Omega}(M,m)
=\sum_{\pi \in \cW_{n,M,}^{\,m}} e^{\,H_{n,L_n}^{\omega,\Omega}(\pi)},
\ee 
where $\cW_{n,M}^{\,m}$ contains those paths in $\cW_{n,M}$ that, in each column, make 
at most $m L_n$ steps. We also restrict the set $\cR_{p,M}$ in \eqref{RMNdef} to those 
limiting empirical measures whose support is included in $\overline \cV_{M}^{\,m}$, i.e., those measures charging the types of column that 
can be crossed in less than $m L_n$ steps only. To that aim we recall \eqref{RMNdefalt} 
and define, for $\Omega\in \{A,B\}^{\N_0\times\Z}$ and $N\in \N$,
\be{RMNNdefalt}
\begin{aligned}
\cR^{\Omega,m}_{M,N} &= \big\{\rho_N(\Omega,\Pi,b,x)\ \text{with}\  
b=(b_j)_{j\in \N_0} \in (\mathbb{Q}_{(0,1]})^{\N_0},\\
&\qquad \Pi=(\Pi_j)_{j\in\N_0} \in \{0\}\times \Z^{\N} \colon\,
|\Delta\Pi_j|\leq M\,\ \ \forall\,j\in\N_0,\\
&\qquad x=(x_j)_{j\in \N_0} \in \{1,2\}^{N_0}\colon\, \big(\Omega(j,\Pi_j+\cdot),
\Delta \Pi_j,b_j,b_{j+1},x_j\big)\in \cV_M^{m}\big\}
\end{aligned} 
\ee
which is a  subset of $\cR_{M,N}^{\Omega}$ and allows us to define 
\be{RMmdef}
\cR^{\Omega,m}_M = \mathrm{closure}\Big(\cap_{N'\in\N} \cup_{N \geq N'}\, 
\cR^{\Omega,m}_{M,N}\Big),
\ee
which, for $\P$-a.e. $\Omega$ is equal to $\cR_{p,M}^{m}\subsetneq \cR_{p,M}$.

At this stage, we further define, 
\be{modvaralt}
f(M,m;\alpha,\beta)
= \sup_{\rho\in \cR_{p,M}^{\,m}}\,
\sup_{(u_\Theta)_{\Theta\in \overline\cV_M^{\,m}}\in\cB_{\,\overline\cV_M^{\,m}}}
V(\rho,u),
\ee
where
\be{ddeff}
V(\rho,u)=\frac{\int_{\overline \cV_M^{\,m}}\,u_\Theta\,
\psi(\Theta,u_{\Theta};\alpha,\beta)\,\rho(d\Theta)}
{\int_{\overline \cV_M^{\,m}}\,u_\Theta\, \rho(d\Theta)},
\ee
where (recall \eqref{set2alt1})
\be{defbm}
\cB_{\overline \cV_M^{\,m}}
= \Big\{(u_\Theta)_{\Theta\in \overline \cV_M^{\,m}}\in\R^{\overline \cV_M^{\,m}}
\colon \Theta \mapsto u_\Theta\in \cC^0\big(\overline \cV_M^{\,m},\R\big),\,
t_\Theta \leq u_\Theta\leq m\,\,\forall\,\Theta\in \overline \cV_M^{\,m} \Big\},
\ee
and where $\overline{\cV}_M^{\,m}$ is endowed with the distance $d_M$ defined in 
\eqref{dist} in Appendix~\ref{B.2}.

Let $\cW^{*,m}_{n,M}\subset\cW_{n,M}^{\,m}$ be the subset consisting of those paths 
whose endpoint lies at the boundary between two columns of blocks, i.e., satisfies
$\pi_{n,1}\in \N L_n$. Recall \eqref{intermp}, and define $Z^{*,\omega,\Omega}_{n,L_n}
(M)$ and $f^{*,\omega,\Omega}_{1,n}(M,m;\alpha,\beta)$ as the counterparts of 
$Z^{\omega,\Omega}_{n,L_n}(M,m)$ and  $f^{\omega,\Omega}_{1,n}(M,m;\alpha,\beta)$ 
when $\cW_{n,M}^{\,m}$ is replaced by $\cW_{n,M}^{*,m}$. Then there exists a constant $c>0$, 
depending on $\alpha$ and $\beta$ only, such that
\be{inegW}
\begin{aligned}
&Z^{\omega,\Omega}_{1,n,L_n}(M,m) e^{-c L_n}
\leq Z^{*,\omega,\Omega}_{1,n,L_n}(M,m)
\leq Z^{\omega,\Omega}_{1,n,L_n}(M,m),\\  
&n\in \N, \,\omega \in \{A,B\}^\N,\, \Omega \in \{A,B\}^{\N_0\times \Z}.
\end{aligned}
\ee
The left-hand side of the latter inequality is obtained by changing the last $L_n$ 
steps of each trajectory in $\cW_{n,M}^{\,m}$ to make sure that the endpoint falls in 
$L_n\N$. The energetic and entropic cost of this change are obviously $O(L_n)$. 
By assumption, $\lim_{n\to\infty} L_n/n=0$, which together with \eqref{inegW} 
implies that the limits of $f^{\omega,\Omega}_{1,n}(M,m;\alpha,\beta)$ and 
$f^{*,\omega,\Omega}_{1,n}(M,m;\alpha,\beta)$ as $n\to\infty$ are the same. In the 
sequel we will therefore restrict the summation in the partition function to 
$\cW_{n,M}^{*,m}$ and drop the $*$ from the notations.

Finally, let
\be{add12}
\begin{aligned}
f_{1,n}^{\Omega}(M,m;\alpha,\beta)
&= \E_\omega\big[f^{\omega,\Omega}_{1,n}(M,m;\alpha,\beta)\big],\\
f_{1,n}(M,m;\alpha,\beta)
&= \E_{\omega,\Omega}\big[f^{\omega,\Omega}_{1,n}(M,m;\alpha,\beta)\big],
\end{aligned}
\ee
and recall \eqref{partfunc} to set $f_n^\Omega(M;\alpha,\beta)=\E_\omega[f_n^{\omega,
\Omega}(M;\alpha,\beta)]$.


\subsubsection{Key Propositions}
\label{Key Propositions}

Theorem~\ref{varformula} is a consequence of Propositions \ref{pr:formimp}, \ref{pr:formimpp} 
and \ref{pr:varar} stated below and proven in Sections~\ref{s22}--\ref{s23alt}, 
Sections~\ref{step1}--\ref{step3} and Section~\ref{svarar}, respectively. 

\bp{pr:formimp}
For all $(M,m)\in \EIGH$, 
\be{formimp}
\lim_{n\to \infty} f^{\Omega}_{1,n}(M,m;\alpha,\beta)
=f(M,m;\alpha,\beta)\quad \text{ for } \P-a.e.\,\Omega.
\ee
\ep
\bp{pr:formimpp}
For all $M\in \N$,
\be{formimpp}
\lim_{n\to \infty} f^{\Omega}_{n}(M;\alpha,\beta)
=\sup_{m\geq M+2} f(M,m;\alpha,\beta)\quad \text{ for } \P-a.e.\,\Omega.
\ee
\ep

\bp{pr:varar}
For all $M\in \N$,
\be{varar}
\sup_{m\geq M+2} f(M,m;\alpha,\beta)=\sup_{\rho\in \cR_{p,M}}\,
\sup_{(u_\Theta)_{\Theta\in \overline\cV_M}\, 
\in\,\cB_{\,\overline\cV_M}}   V(\rho,u),
\ee
\ep
where, in the righthand side of \eqref{varar}, we recognize the variational formula of 
Theorem \ref{varformula} and with $\cB_{\overline\cV_M}$ defined in \eqref{set2}.

\medskip\noindent 
{\bf Proof of Theorem~\ref{varformula} subject to Propositions \ref{pr:formimp}, 
\ref{pr:formimpp} and \ref{pr:varar}.}
The proof of Theorem~\ref{varformula} will be complete once we show that for all 
$(M,m)\in \EIGH$
\be{rstt}
\lim_{n\to \infty} |f_{n}^{\omega,\Omega}(M,m;\alpha,\beta)
-f_{n}^{\Omega}(M,m;\alpha,\beta)|=0
\quad \text{ for } \P-a.e.\,\,(\omega,\Omega).
\ee 
To that aim, we note that for all $n\in \N$ the $\Omega$-dependence of $f_{n}^{\omega,\Omega}
(M,m;\alpha,\beta)$ is restricted to $\big\{\Omega_x\colon\,x\in G_n\big\}$ with $G_n
=\{0,\dots,\frac{n}{L_n}\}\times\{-\frac{n}{L_n},\dots,\frac{n}{L_n}\}$. Thus, for $n\in\N$ 
and $\gep>0$ we set 
\be{defac}
A_{\gep,n}=\{| f_{n}^{\omega,\Omega}(M;\alpha,\beta)- f_{n}^{\Omega}(M;\alpha,\beta)|>\gep)\},
\ee
and by independence of $\omega$ and $\Omega$ we can write
\begin{align}
\label{concv}
{\textstyle \nonumber\P_{\omega,\Omega}(A_{\gep,n})}
&{\textstyle =\sum_{\Upsilon\in \{A,B\}^{G_n}} 
\P_{\omega,\Omega}(A_{\gep,n}\cap \{\Omega_{G_n}=\Upsilon\})}\\
&{\textstyle =\sum_{\Upsilon\in \{A,B\}^{G_n}} 
\P_{\omega}(| f_{n}^{\omega,\Upsilon}(M;\alpha,\beta)
- f_{n}^{\Upsilon}(M;\alpha,\beta)|>\gep)\ \P_{\Omega}(\{\Omega_{G_n}=\Upsilon\})}.
\end{align}
At this stage, for each $n\in \N$ we can apply the concentration inequality $\eqref{concmesut}$ 
in Appendix \ref{Ann2} with $\Gamma=\cW_{n,M}^{\,m}$, $l=n$, $\eta=\gep n$,
\be{addd25}
\xi_i = -\alpha\, 1\{\omega_i=A\} +\beta\, 
1\{\omega_i=B\}, \qquad i\in\N,
\ee 
and with $T(x,y)$ indicating in which block step $(x,y)$ lies in. Therefore, there exist 
$C_1, C_2>0$ such that for all $n\in \N$ and all $\Upsilon\in \{A,B\}^{G_n}$ we have
\be{resconc}
\P_{\omega}(| f_{n}^{\omega,\Upsilon}(M;\alpha,\beta)
- f_{n}^{\Upsilon}(M;\alpha,\beta)|>\gep) \leq C_1 e^{-C_2 \gep^2 n},
\ee
which, together with \eqref{concv} yields $\P_{\omega,\Omega}(A_{\gep,n})\leq 
C_1 e^{-C_2 \gep^2 n}$ for all $n\in \N$. By using the Borel-Cantelli Lemma, we 
obtain \eqref{rstt}.
\hspace*{\fill}$\square$


\subsection{Proof of Proposition \ref{pr:formimp}}
\label{prooffor} 
Pick $(M,m)\in \EIGH$ and $(\alpha,\beta)\in \CONE$. In Steps 1--2 in 
Sections~\ref{s22}--\ref{s23} we introduce an intermediate free energy 
$f_{3,n}^{\Omega}(M,m;\alpha,\beta)$ and show that
\begin{align}
\label{limsup1}
\lim_{n\to \infty} |f_{1,n}^{\Omega}(M,m;\alpha,\beta)
- f_{3,n}^{\Omega}(M,m;\alpha,\beta)|=0 \qquad \forall\,
\Omega\in \{A,B\}^{N_0\times \Z}.
\end{align}
Next, in Steps 3--4 in Sections~\ref{s24}--\ref{s24alt} we show that 
\be{limsup3}
\limsup_{n\to \infty} f_{3,n}^{\Omega}(M,m;\alpha,\beta)
= f(M,m;\alpha,\beta)
\qquad \text{for}\,\,\P-a.e.\,\, \Omega,
\ee
while in Step 5 in Section~\ref{s23alt} we prove that 
\be{convco1}
\liminf_{n\to \infty} f_{3,n}^{\Omega}(M,m;\alpha,\beta)
= \limsup_{n\to \infty} f_{3,n}^{\Omega}(M,m;\alpha,\beta) 
\qquad \text{for}\,\,\P-a.e.\,\,\Omega.
\ee
Combing (\ref{limsup1}--\ref{convco1}) we get
\be{fin}
\liminf_{n\to \infty} f_{1,n}^{\Omega}(M,m;\alpha,\beta)
= \limsup_{n\to \infty} f_{1,n}^{\Omega}(M,m;\alpha,\beta)=f(M,m;\alpha,\beta)
\qquad \text{for}\,\,\P-a.e.\,\, \Omega,
\ee
which completes the proof of Proposition~\ref{pr:formimp}.

In the proof we need the following order relation. 

\begin{definition}
For $g,\widetilde{g}\colon\,\N^3\times \CONE\mapsto\R$, write $g\prec \widetilde{g}$ 
if for all $(M,m)\in \EIGH$, $(\alpha,\beta)\in \CONE$ and $\gep>0$ there exists an 
$n_{\gep}\in \N$ such that
\be{add13}
g(n,M,m; \alpha,\beta) \leq \widetilde{g}(n,M,m; \alpha,\beta)
+\gep \qquad \forall\,n\geq n_{\gep}.
\ee
\end{definition}

\noindent
The proof of \eqref{limsup1} will be complete once we show that $f_1^\Omega \prec
f_3^\Omega$ and $f_3^\Omega \prec f_1^\Omega$ for all $\Omega\in \{A,B\}^{\N_0
\times \Z}$. We will focus on $f_1^\Omega\prec f_3^\Omega$, since the proof of 
the latter can be easily adapted to obtain $f_3^\Omega \prec f_1^\Omega$. To prove
$f_1^\Omega \prec f_3^\Omega$ we introduce another intermediate free energy 
$f_2^\Omega$, and we show that $f_1^\Omega\prec f_2^\Omega$ and $f_2^\Omega \prec 
f_3^\Omega$. 

For $L\in \N$, let
\begin{equation}
\label{defDD2}
\cD_L^M=\left\{\Xi
=(\Delta\Pi,b_0,b_1)\in \{-M,\dots,M\}\times 
\{\tfrac1L,\tfrac2L,\dots,1\}^2 \right\}.	
\end{equation}
For $L,N\in\N$, let 
\begin{align}
\label{defD2}
\widetilde{\cD}_{L,N}^M 
&= \Big\{\Theta_{\text{traj}}=(\Xi_i)_{i\in \{0,\dots, N-1\}}
\in (\cD_L^M)^N\colon\, b_{0,0}=\tfrac1L,\,
b_{0,i}=b_{1,i-1}\,\,\forall\,1\leq i\leq N-1\Big\},
\end{align}
and with each $\Theta_{\text{traj}}\in\widetilde{\cD}_{L,N}^M$ associate the sequence 
$(\Pi_i)_{i=0}^N$ defined by $\Pi_0=0$ and $\Pi_i=\sum_{j=0}^{i-1} \Delta\Pi_j$ 
for $1\leq i\leq N$. Next, for $\Omega\in \{A,B\}^{\N_0\times \Z}$ and $\Theta_{\text{traj}}
\in \widetilde{\cD}_{L,N}^M$, set
\be{defXalt}
\cX_{\Theta_{\text{traj}},\Omega}^{M,m}
=\big\{x\in \{1,2\}^{\{0,\dots,N-1\}}\colon 
(\Omega(i,\Pi_i+\cdot),\Xi_i,x_i)\in  \cV_{M}^{m} \,\,\forall\,0\leq i\leq N-1\big\},
\ee
and, for $x\in \cX_{\Theta_{\text{traj}},\Omega}^{M,m}$, set
\be{tthe}
\Theta_i=(\Omega(i,\Pi_i+\cdot),\Xi_i,x_i) \quad \text{for}\quad i\in \{0,\dots,N-1\}
\ee 
and 
\begin{equation}
\begin{aligned}
\label{defU2}
\cU_{\,\Theta_{\text{traj}},x,n}^{\,M,m,L} &= \Big\{u=(u_i)_{ i\in \{0,\dots, N-1\}}
\in [1,m]^{N}\colon u_i\in  t_{\Theta_i}+\tfrac{2\N}{L}
\ \,\,\forall\,0\leq i\leq N-1,\,\sum_{i=0}^{N-1} u_i=\tfrac{n}{L}\Big\}.
\end{aligned}
\end{equation}
Note that $\cU_{\,\Theta_{\text{traj}},x,n}^{\,M,m,L}$  is empty when $N\notin
\big[\frac{n}{m L},\frac{n}{L}\big]$.

For $\pi\in \cW_{n,M}^{\,m}$, we let $N_\pi$ be the number of columns crossed by $\pi$ 
after $n$ steps. We denote by $(u_0(\pi),\dots,u_{N_\pi-1}(\pi))$ the time spent 
by $\pi$ in each column divided by $L_n$, and we set $\widetilde{u}_0(\pi)=0$ 
and $\widetilde{u}_{j}(\pi)=\sum_{k=0}^{j-1} u_k(\pi)$ for $1\leq j\leq N_\pi$. With 
these notations, the partition function in \eqref{intermp} can be rewritten as
\begin{equation}
\label{A11}
Z_{1,n,L_n}^{\,\omega,\Omega}(M,m)=\sum_{N=n/m L_n}^{n/L_n}\,
\,\sum_{\Theta_{\text{traj}}\in \widetilde{\cD}_{L_n,N}^M }\,
\sum_{x\in \cX_{\Theta_{\text{traj}},\Omega}^{M,m} }
\,\sum_{u\in \,\cU_{\,\Theta_{\text{traj}},x,n}^{\,M,m,L_n}}   
\,A_1,
\end{equation}
with (recall \eqref{partfunc2})
\begin{align}
A_1=\prod_{i=0}^{N-1}\, Z^{\theta^{\widetilde{u}_{i} L_n}(\omega)}_{L_{n}}
(\Omega(i,\Pi_{i}+\cdot),\Xi_i,x_i, u_i).
\end{align}


\subsubsection{Step 1}
\label{s22}

In this step we average over the disorder $\omega$ in each column. To that end, we set
\be{interm3}
f_{2,n}^\Omega(M,m;\alpha,\beta) = \tfrac{1}{n} \log Z_{2,n,L_n}^{\Omega}(M,m)
\ee
with
\be{inteerm3}
Z_{2,n,L_n}^{\Omega}(M,m)=\sum_{N=n/m L_n}^{n/L_n} \ 
\,\sum_{\Theta_{\text{traj}}\in \widetilde{\cD}_{L_n,N}^M }\,
\sum_{x\in \cX_{\Theta_{\text{traj}},\Omega}^{M,m}}
\,\sum_{u\in \,\cU_{\,\Theta_{\text{traj}},x,n}^{\,M,m,L_n}}  
\,A_2,
\ee
where
\begin{align}
\label{defa24}
A_2 &=\prod_{i=0}^{N-1} e^{\E_\omega\big[\log  
Z^{\theta^{\widetilde{u}_{i}}(\omega)}_{L_{n}}
(\Omega(i,\Pi_{i}+\cdot),\Xi_i, x_i, u_i)\big]}
= \prod_{i=0}^{N-1}\, 
e^{u_i L_n \psi_{L_n}(\Omega(i,\Pi_{i}+\cdot),\Xi_i,x_i, u_i)}.
\end{align}
Note that the $\omega$-dependence has been removed from $Z^\Omega_{2,n,L_n}(M,m)$.

To prove that $f_1^\Omega\prec f_2^\Omega$, we need to show that for all $\gep>0$ 
there exists an $n_{\gep}\in \N$ such that, for $n\geq n_{\gep}$ and all $\Omega$,
\be{Eform}
\E_\omega\big[\log Z_{1,n,L_n}^{\,\omega,\Omega}(M,m)\big]
\leq \log Z_{2,n,L_n}^{\Omega}(M,m)+\gep n.
\ee
To this end, we rewrite $Z_{1,n,L_n}^{\,\omega,\Omega}(M,m)$ as
\be{rewr}
Z_{1,n,L_n}^{\,\omega,\Omega}(M,m)=\sum_{N=n/m L_n}^{n/L_n}
\sum_{\Theta_{\text{traj}}\in \widetilde{\cD}_{L_n,N}^M } 
\,\sum_{x\in \cX_{\Theta_{\text{traj}},\Omega}^{M,m} }
\,\sum_{u\in \,\cU_{\Theta_{\text{traj}},x,n}^{\,M,m,L_n}}
A_2\,\frac{A_1}{A_2},
\ee
where we note that
\be{A23}
\frac{A_1}{A_2} 
=\prod_{i=0}^{N-1}\,
e^{u_i L_n \big[\psi^{\theta^{\widetilde{u}_{i}L_n}(\omega)}_{L_n}
(\Omega(i,\Pi_{i}+\cdot),\Xi_i,x_i,u_i)
-\psi_{L_n}(\Omega(i,\Pi_{i}+\cdot),\Xi_i,x_i,u_i)\big]}.
\ee
In order to average over $\omega$, we apply a concentration of measure inequality. 
Set 
\be{add23}
\mathcal{K}_n = \bigcup_{N=n/m L_n}^{n/L_n}
\bigcup_{\Theta_{\text{traj}}\in \widetilde{\cD}_{L_n,N}^M} 
\,\bigcup_{x\in \cX_{\Theta_{\text{traj}},\Omega}^{M,m} }
\,\bigcup_{u\in \,\cU_{\Theta_{\text{traj}},x,n}^{\,M,m,L_n}}
\Big\{ |\log A_1 -\log A_2| \geq \gep n\Big\},
\ee
and note that $\omega\in \mathcal{K}_n^c$ implies that $Z_{1,n,L_n}^{\,\omega,\Omega}(M,m)
\leq e^{\gep n} Z_{2,n,L_n}^{\Omega}(M,m)$. Consequently, we can write
\begin{align}
\label{borne}
\nonumber 
\E_\omega\big[\log Z_{1,n,L_n}^{\,\omega,\Omega}(M,m)\big]
&= \E_\omega\big[\log Z_{1,n,L_n}^{\,\omega,\Omega}(M,m)
\,1_{\{\mathcal{K}_n\}} \big]+\E_\omega\big[\log Z_{1,n,L_n}^{\,\omega,\Omega}(M,m)
\,1_{\{\mathcal{K}_n^c\}}\big]\\
&\leq \E_\omega\big[\log Z_{1,n,L_n}^{\,\omega,\Omega}(M,m)
\,1_{\{\mathcal{K}_n\}} \big]+ \log Z_{2,n,L_n}^{\,\Omega}(M,m)\,
+ \gep n.
\end{align}
We can now use the uniform bound in \eqref{boundel} to control the first term in 
the right-hand side of \eqref{borne}, to obtain  
\begin{align}
\E_\omega\big[\log Z_{1,n,L_n}^{\,\omega,\Omega}(M,m)\big]
\leq &  \log Z_{2,n,L_n}^{\,\Omega}(M,m) +\gep n
+ C_{\text{uf}}(\alpha)\,n\,\P_\omega(\mathcal{K}_n).
\end{align}
Therefore the proof of this step will be complete once we show that 
$\P_\omega(\mathcal{K}_n)$ vanishes as $n\to\infty$.

\begin{lemma}
There exist $C_1,C_2>0$ such that, for all $\gep>0$, $n\in\N$, $N\in\big\{\tfrac{n}{m L_n},
\dots,\frac{n}{L_n}\big\}$, $\Omega\in \{A,B\}^{\N_0\times\Z}$, $\Theta_{\mathrm{traj}}
\in \widetilde{\cD}_{L_n,N}^M$, $x\in \cX_{\Theta_{\mathrm{traj}},\Omega}^{M,m}$ and $u\in \,\cU_{\Theta_{\mathrm{traj}},x,n}^{\,M,m,L_n}$,
\begin{equation}
\P_{\omega}(|\log  A_1-\log A_2|
\geq \gep n)\leq C_1 e^{-C_2\gep^2 n}.
\end{equation} 
\end{lemma}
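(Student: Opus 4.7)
The plan is to view $\log A_1$ as a single function $F(\omega)$ of the disorder with the bounded-differences property, and $\log A_2=\E_\omega F(\omega)$ as its mean, and then invoke the McDiarmid/Azuma--Hoeffding concentration inequality recalled in Appendix~\ref{Ann2}.

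First, I would rewrite
\[
\log A_1 = F(\omega) := \sum_{i=0}^{N-1}\log Z^{\theta^{\widetilde{u}_iL_n}(\omega)}_{L_n}\bigl(\Omega(i,\Pi_i+\cdot),\Xi_i,x_i,u_i\bigr),
\]
so that $\log A_2=\E_\omega F(\omega)$ by \eqref{defa24}. Since $u\in\cU_{\Theta_{\mathrm{traj}},x,n}^{\,M,m,L_n}$ forces $\sum_{i=0}^{N-1}u_iL_n = n$, the intervals $I_i:=\{\widetilde u_iL_n+1,\dots,\widetilde u_{i+1}L_n\}$ form a partition of $\{1,\dots,n\}$, and the $i$-th summand of $F$ depends only on the coordinates $\{\omega_k\colon k\in I_i\}$.

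Second, I would verify the bounded-differences property: because $(\alpha,\beta)\in\CONE$ gives $|\beta|\leq\alpha$, replacing a single coordinate $\omega_j$ by $\omega'_j$ shifts the Hamiltonian $H^{\omega,\Omega(i,\Pi_i+\cdot)}_{u_iL_n,L_n}(\pi)$ of \emph{every} $\pi\in\cW_{\Theta_i,u_i,L_n}$ by at most $|\alpha|+|\beta|\leq 2\alpha$, where $i$ is the unique index with $j\in I_i$, and leaves all other summands of $F$ untouched. Since a uniform shift of the exponents of a positive sum by at most $2\alpha$ shifts its log by at most $2\alpha$, one has $|F(\omega)-F(\omega')|\leq 2\alpha$ whenever $\omega,\omega'$ differ in a single coordinate.

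Third, I would apply McDiarmid's inequality to $F$, viewed as a function of the independent variables $\omega_1,\dots,\omega_n$ with bounded-differences constants $c_1=\dots=c_n=2\alpha$, to obtain
\[
\P_\omega\bigl(|F(\omega)-\E_\omega F(\omega)|\geq \gep n\bigr)
\leq 2\exp\Bigl(-\tfrac{2(\gep n)^2}{n(2\alpha)^2}\Bigr)
=2\exp\Bigl(-\tfrac{\gep^2n}{2\alpha^2}\Bigr),
\]
which yields the claim with $C_1=2$, $C_2=1/(2\alpha^2)$, uniformly in $N,\Omega,\Theta_{\mathrm{traj}},x,u$. The only real subtlety is the book-keeping showing that the column-by-column structure assigns each $\omega_j$ to a single summand of $F$: without this disjointness, McDiarmid would only yield a weak bound of the form $\exp(-c\gep^2 n/L_n)$, which in view of $L_n\to\infty$ is insufficient for the subsequent Borel--Cantelli applications. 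Everything else is a direct adaptation of the concentration tool already used elsewhere in the paper.
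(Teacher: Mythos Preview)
Your proof is correct and is essentially the same argument as the paper's. The paper recognizes $A_1$ as the partition function over the set $\Gamma\subset\cW_{n,M}^{\,m}$ of concatenated column-crossing paths and applies the prepackaged corollary \eqref{concmesut} from Appendix~\ref{Ann2} with $l=n$, whereas you verify the $2\alpha$ bounded-differences property directly via the column decomposition of $F$; both are the same McDiarmid application, and your observation that each $\omega_j$ belongs to a unique interval $I_i$ is exactly what makes $\Gamma$ a bona fide set of $n$-step paths in the paper's formulation.
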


\begin{proof}
Pick $\Theta_{\text{traj}}\in \widetilde{\cD}_{L_n,N}^M$, $x\in \cX_{\Theta_{\text{traj},
\Omega}}^{M,m}$ and $u\in\,\cU_{\Theta_{\text{traj}},x,n}^{\,M,m,L_n}$, and consider the 
subset $\Gamma$ of $\cW_{n,M}^{\,m}$ consisting of those paths of length $n$ that first 
cross the $(\Omega(0,\cdot),\Xi_{0},x_0)$ column such that $\pi_0=(0,1)$ and 
$\pi_{\widetilde{u}_{1}L_n}=(1,\Pi_{1}+b_{1,0}) L_n$, then cross the $(\Omega(1,\cdot),
\Xi_{1},x_1)$ column such that $\pi_{\widetilde{u}_{1}L_n+1}=(1+1/L_n,\Pi_{1}+b_{1,0})
L_n$ and $\pi_{\widetilde{u}_{2}L_n}=(2,\Pi_{2}+b_{1,1}) L_n$, and so on. We can apply 
the concentration of measure inequality stated in \eqref{concmesut} to the set 
$\Gamma$ defined above, with $l=n$, $\eta=\gep n$,
\be{add25}
\xi_i = -\alpha\, 1\{\omega_i=A\} +\beta\, 
1\{\omega_i=B\}, \qquad i\in\N,
\ee  
and with $T(x,y)$ indicating in which block step $(x,y)$ lies in. After noting that 
$\E_\omega(\log A_1)= \log A_2$, we  obtain that there exist $C_{1},C_{2}>0$ such 
that, for all $n\in\N$, $N\in\big\{\tfrac{n}{m L_n},\dots,\frac{n}{L_n}\big\}$, 
$\Omega\in \{A,B\}^{\N_0\times\Z}$, $\Theta_{\mathrm{traj}}\in \widetilde{\cD}_{L_n,N}^M$, 
$x\in \cX_{\Theta_{\text{traj},\Omega}}^{M,m}$ and $u\in \,\cU_{\Theta_{\mathrm{traj}},
x,n}^{\,M,m,L_n}$,
\be{rec2}
\P\big(|\log  A_1 -\log A_2| \geq \gep\,n \big) \leq C_{1}\,e^{-C_{2}\,\gep^3\, n}.
\ee
\end{proof}

It now suffices to remark that 
\be{setgr}
\big|\{(N,\Theta_{\text{traj}},x,u)\colon\,N\in \{\tfrac{n}{m L_n},
\dots,\tfrac{n}{L_n}\}, \Theta_{\text{traj}}\in
\widetilde{\cD}_{L_n,N}^M,\, x\in \cX_{\Theta_{\text{traj},\Omega}}^{M,m}, 
u\in \,\cU_{\Theta_{\mathrm{traj}},x,n}^{\,M,m,L_n}\}\big|
\ee 
grows subexponentially in $n$ to obtain that $f_1^\Omega\prec f_2^\Omega$ for all $\Omega$.


\subsubsection{Step 2}
\label{s23}

In this step we replace the finite-size free energy $\psi_{L_n}$ by its limit $\psi$. 
To do so we introduce a third intermediate free energy, 
\be{interm1}
f_{3,n}^\Omega(M,m;\alpha,\beta) =
\E\big[\tfrac{1}{n} \log Z_{3,n,L_n}^{\Omega}(M,m)\big],
\ee
where
\be{inteerm1}
Z_{3,n,L_n}^{\Omega}(M,m)=\sum_{N=n/m L_n}^{n/L_n}\  
\sum_{\Theta_{\text{traj}}\in \widetilde{\cD}_{L_n,N}^M } \,
\sum_{x\in \cX_{\Theta_{\text{traj}},\Omega}^{M,m} }
\,\sum_{u\in \,\cU_{\Theta_{\text{traj}},x,n}^{\,M,m,L_n}}A_3
\ee
with
\begin{align}
\label{defa23}
A_{3} &=\prod_{i=0}^{N-1}\, 
e^{u_i L_n \psi(\Omega(i,\Pi_{i}+\cdot),\Xi_i,x_i,u_i)}.
\end{align}
For all $\Omega$,
\be{a3}
\frac{A_2}{A_3} = \prod_{i=0}^{N-1}\,
e^{u_i L_n \big[\psi_{L_n}(\Omega(i,\Pi_{i}+\cdot),\Xi_i,x_i,u_i)
-\psi(\Omega(i,\Pi_{i}+\cdot),\Xi_i,x_i,u_i)\big]},
\ee 
and, for all $i\in \{0,\dots,N-1\}$, we have 
$(\Omega(i,\Pi_{i}+\cdot),\Xi_i,x_i,u_i)\in \cV^{*,\,m}_M$, so that 
Proposition~\ref{convfree1} can be applied.


\subsubsection{Step 3}
\label{s24}

In this step we want the variational formula \eqref{modvaralt} to appear. Recall \eqref{defrho} 
and define, for $n\in \N$, $(M,m)\in \EIGH$, $N\in \{\frac{n}{m L_n},\dots,
\frac{n}{L_n}\}$, $\Theta_{\text{traj}}\in \widetilde{\cD}_{L_n,N}^M$ and $x\in
\cX_{\Theta_{\text{traj}},\Omega}^{M,m}$,
\be{defthalt}
\Theta_j=(\Omega(j,\Pi_{j}+\cdot),\Xi_j,x_j),
\qquad j = 0,\dots,N-1,
\ee
and
\begin{equation}
\label{defrho2}
\rho^\Omega_{\Theta_{\text{traj}},x}\big(\Theta,\Theta^{'}\big)
=\frac{1}{N} \sum_{j=1}^{N} 
1_{\big\{(\Theta_{j-1},\Theta_{j})=(\Theta,\Theta^{'})\big\}},
\end{equation}
and, for $u\in \,\cU_{\Theta_{\text{traj}},x,n}^{\,M,m,L_n}$,
\be{defHam}
H^\Omega(\Theta_{\text{traj}},x,u)=\sum_{j=0}^{N-1} u_j\,\psi(\Theta_j, u_{j}). 
\ee
In terms of these quantities we can rewrite $Z_{3,n,L_n}^\Omega(M,m)$ in \eqref{inteerm1} 
as
\begin{align}
\label{variaf2}
Z_{3,n,L_n}^{\Omega}(M,m)
&=\sum_{N=n/m L_n}^{n/L_n} \sum_{\Theta_{\text{traj}}\in \widetilde{\cD}_{L_n,N}^M } 
 \,\sum_{x\in \cX_{\Theta_{\text{traj}},\Omega}^{M,m} }
\,\sum_{u\in \,\cU_{\Theta_{\text{traj}},x,n}^{\,M,m,L_n}}
e^{L_n\,  H^\Omega(\Theta_{\text{traj}},x,u) }.
\end{align}
For $n\in \N$, denote by
\be{}
N_n^{\Omega}, \qquad \Theta_{\text{traj},n}^{\Omega}
\in \widetilde{\cD}_{L_n,N^{\Omega}_n}^M, 
\qquad x_n^\Omega\in \cX_{\Theta_{\text{traj},n}^\Omega,\Omega}^{M,m},
\qquad u^\Omega_n \in\cU_{\Theta_{\text{traj},n}^\Omega,x_n^\Omega,n}^{\,M,m,L_n},
\ee 
the indices in the summation set of \eqref{variaf2} that maximize $H^\Omega(\Theta_{
\text{traj}},x,u)$. For ease of notation we put  
\be{redef}
\Theta_{\text{traj},n}^{\Omega}
=(\Xi_{j}^n)_{j=0}^{N_n^{\Omega}-1},\quad x^\Omega_n
=(x_j^n)_{j=0}^{N_n^{\Omega}-1}, \quad u^\Omega_n
=(u_j^n)_{j=0}^{N_n^{\Omega}-1},
\ee
and
\be{carsum}
c_n =\big|\{(N,\Theta_{\text{traj}},x,u)\colon\, 
\tfrac{n}{m L_n} \leq N\leq \tfrac{n}{L_n},\, \Theta_{\text{traj}}\in
\widetilde{\cD}^M_{L_n,N},\, x\in \cX_{\Theta_{\text{traj}},\Omega}^{M,m},\,
u\in \,\cU_{\Theta_{\text{traj}},x,n}^{\,M,m,L_n}\}\big|.
\ee
Then we can estimate
\be{impo1alt}
\frac1n \log Z_{3,n,L_n}^{\Omega}(M,m)\leq \frac1n \log c_n
+ \tfrac{L_n}{n}\sum_{j=0}^{N_n^{\Omega}-1} u_j^n\, 
\psi(\Theta_j^n, u_{j}^n). 
\ee

We next note that $u\mapsto u \psi(\Theta,u)$ is concave for all $\Theta\in 
\overline{\cV}_M$ (see Lemma~\ref{concav}). Hence, after setting 
\begin{align}
v_\Theta^n=\sum_{j=0}^{N_n^{\Omega}-1} 1_{\{\Theta_j^n=\Theta\}}\, 
u_j^n, \qquad  d_\Theta^n=\sum_{j=0}^{N_n^{\Omega}-1} 
1_{\{\Theta_j^n=\Theta\}}, \qquad \Theta\in \overline\cV_M^{\,m},
\end{align}
we can estimate 
\be{utilconcav}
\sum_{j=0}^{N_n^{\Omega}-1} 1_{\{\Theta_j^n=\Theta\}}\, 
u_{j}^n\, \psi(\Theta_j^n, u_{j}^n)
\leq v_\Theta^n\,\psi\big(\Theta, \tfrac{ v_\Theta^n}{d_\Theta^n}\big) 
\quad \text{for} \quad \Theta\in \overline\cV_M^{\,m}\colon d_\Theta^n\geq 1.
\ee 
Next, we recall \eqref{defrho2} and we set $\rho_{n}=\rho_{\Theta_{\text{traj},
n}^{\Omega},x_n^\Omega}^\Omega$, so that $\rho_{n,1}(\Theta)=d_\Theta^n/N_n^\Omega$ 
for all $\Theta\in \overline{\cV}_M^{\,m}$. Since $\{\Theta\in \overline{\cV}_M^{\,m}\colon\,
d_\Theta^n\geq 1\}$ is a finite subset of $\overline{\cV}_M^{\,m}$, we can easily extend 
$\Theta\mapsto v_\Theta^n/d_\Theta^n$ from $\{\Theta\in \overline{\cV}_M\colon\,
d_\Theta^n\geq 1\}$ to $\overline{\cV}_M^{\,m}$ as a continuous function. Moreover, 
$\sum_{j=0}^{N_n^\Omega-1} u_j^n=n/L_n$ implies that $N_n^\Omega \int_{\overline\cV_M^{\,m}}
v_\Theta^n/d_\Theta^n\,\rho_{n,1} (d\Theta)=n/L_n,$ which, together with
\eqref{impo1alt} and \eqref{utilconcav} gives 
\begin{align}\label{immp}
\tfrac1n \log Z_{3,n,L_n}^{\Omega}(M,m)
&\leq \sup_{u \in \cB_{\,\overline \cV_M^{\,m}}}\frac{\int_{\overline \cV_M^{\,m}} 
u_{\Theta}\, \psi(\Theta,u_\Theta)\,
\rho_{n}(d\Theta)}{\int_{\overline\cV_M^{\,m}} u_{\Theta}\,
\rho_{n}(d\Theta)}+ o(1), \qquad n\to\infty,
\end{align}
where we use that $\lim_{n\to\infty} \frac1n \log c_n=0$. In what follows, we abbreviate 
the first term in the right-hand side of the last display by $l_n$. We want to show 
that $\limsup_{n\to \infty} \tfrac1n \log Z_{3,n,L_n}^{\Omega}(M,m)$ $\leq f(M,m;\alpha,\beta)$. 
To that end, we assume that $\tfrac1n \log Z_{3,n,L_n}^{\Omega}(M,m)$ converges to some
$t\in \R$ and we prove that $t\leq f(M,m;\alpha,\beta)$. Since $(l_n)_{n\in\N}$ is bounded 
and $\overline \cV_M^{\,m}$ is compact, it follows from the definition of $l_n$ that along an appropriate subsequence both $l_n \to l_\infty \geq t$ and $\rho_n\to\rho_\infty\in
\cR_{p,M}^{\,m}$ as $n\to\infty$.  Hence, the proof will be complete once we show that 
\be{eqfin}
l_\infty\leq \sup_{u\in \cB_{\,\overline \cV_M^{\,m}}} V(\rho_{\infty},u),
\ee
because the right-hand side in \eqref{eqfin} is bounded from above by $f(M,m;\alpha,\beta)$. 

Recall \eqref{deft} and, for $\Theta\in \overline\cV_M^{\,m}$ and $y\in \R$, define
\be{defdef}
u_\Theta^{M,m}(y)= \left\{
\begin{array}{ll}
\vspace{.1cm}
t_\Theta
& \mbox{if } \partial^+_u (u\,\psi(\Theta,u))(t_\Theta) \leq y, \\
\vspace{.1cm}
m   
& \mbox{if } \partial^-_u (u\,\psi(\Theta,u))(m) \geq y,\\
z
&\mbox{otherwise, with } z \mbox{ such that } \partial^-_u (u\,\psi(\Theta,u))(z) 
\geq y \geq \partial^+_u (u\,\psi(\Theta,u))(z), 
\end{array}
\right.
\ee
where $z$ is unique by strict concavity of $u\to u\psi(\Theta,u)$ (see Lemma~\ref{B.2}). 

\begin{lemma}
\label{conti}
(i) For all $y\in \R$ and $(M,m)\in \EIGH$, $\Theta \mapsto u_\Theta^{M,m}(y)$ is 
continuous on $(\overline \cV_M^{\,m},d_M)$, where $d_M$ is defined in \eqref{dist} in 
Appendix~{\rm \ref{B}}.\\
(ii) For all $(M,m)\in\EIGH$ and $\Theta\in\overline{\cV}_M^{\,m}$, $y \mapsto
u_\Theta^{M,m}(y)$ is continuous on $\R$.
\end{lemma}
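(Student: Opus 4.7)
The plan is to reformulate $u^{M,m}_\Theta(y)$ as the unique maximizer of a strictly concave constrained optimization problem, and then to establish both continuity statements through standard argmax-stability arguments based on compactness and uniqueness. The first step is to observe that, for each fixed $\Theta \in \overline{\cV}_M^{\,m}$, the map $u \mapsto u\,\psi(\Theta,u)$ is strictly concave on $[t_\Theta,m]$: this is inherited from Assumption~\ref{assu} (strict concavity of $\mu \mapsto \mu\,\phi^\cI(\mu)$) and from the concavity of $a \mapsto a\,\tilde\kappa(a/h,l/h)$ recorded in Lemma~\ref{l:lemconv2}, both entering the variational formulas of Propositions~\ref{energ}--\ref{energ1}. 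Together with \eqref{defdef}, strict concavity yields
\begin{equation*}
u^{M,m}_\Theta(y) \;=\; \mathop{\mathrm{arg\,max}}_{u \in [t_\Theta,\,m]} \bigl[\,u\,\psi(\Theta,u) - y\,u\,\bigr],
\end{equation*}
with existence of the maximizer by compactness and uniqueness by strict concavity.

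For part~(ii), fix $\Theta$ and take $y_n \to y$. Set $u_n = u^{M,m}_\Theta(y_n) \in [t_\Theta, m]$ and extract any subsequential limit $u^* = \lim_k u_{n_k}$. Since $u \mapsto u\,\psi(\Theta,u)$ is continuous on $[t_\Theta,m]$ (concave functions are continuous on the interior of their domain, and the variational formulas extend the definition continuously up to the endpoints), passing to the limit in the optimality inequality $u_n\psi(\Theta,u_n) - y_n u_n \geq v\psi(\Theta,v) - y_n v$ for all $v \in [t_\Theta,m]$ shows that $u^*$ maximizes $u\,\psi(\Theta,u) - y u$ on $[t_\Theta,m]$. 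By uniqueness, $u^* = u^{M,m}_\Theta(y)$, and since every subsequential limit equals this common value, the whole sequence converges.

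For part~(i), fix $y$ and take $\Theta_n \to \Theta$ in $(\overline{\cV}_M^{\,m},d_M)$. Two preliminary continuity statements are needed. First, $\Theta \mapsto t_\Theta$ is continuous on $\overline{\cV}_M^{\,m}$: the formulas \eqref{deft}--\eqref{deftt} depend only on $\Delta\Pi$, on $b_0,b_1$, and on a finite window of $\chi$-coordinates (through $n_0,n_1$), and this discrete data is locally constant under small $d_M$-perturbations while $b_0,b_1$ vary continuously. Second, $(\Theta, u) \mapsto u\,\psi(\Theta,u)$ is jointly continuous on $\overline{\cV}_M^{\,*,m}$: the $\Theta$-dependence enters the variational representations of Propositions~\ref{energ}--\ref{energ1} only through the vertical-distance functions $l_A, l_B, l_{\nAB}, l_\AB$ (continuous in $(b_0,b_1,\Delta\Pi)$ and locally constant in the relevant $\chi$-window) and through the admissible set $\cL(l_A,l_B;u)$, while the ingredients $\tilde\kappa$ and $\phi^\cI$ are continuous in their numerical arguments. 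The same compactness/uniqueness argument as in (ii), now applied to $u_n = u^{M,m}_{\Theta_n}(y) \in [t_{\Theta_n}, m]$ with $t_{\Theta_n} \to t_\Theta$, forces every subsequential limit $u^*$ to satisfy $u^* \in [t_\Theta,m]$ and to maximize $u\,\psi(\Theta,u) - y u$ on that interval, hence $u^* = u^{M,m}_\Theta(y)$.

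The main obstacle is the joint continuity of $(\Theta,u) \mapsto u\,\psi(\Theta,u)$ on $\overline{\cV}_M^{\,*,m}$, because one must continuously extend $\psi$ from $\cV_M^{*,m}$ (where $b_0,b_1$ are rational) to its closure and verify $d_M$-continuity of the $\Theta$-dependence. Once this continuous extension is in place, the remainder of the argument is a routine application of a standard argmax-stability principle for strictly concave maximization with a continuously varying compact feasible set.
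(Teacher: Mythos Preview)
Your proof is correct. The key observation---that \eqref{defdef} is precisely the first-order characterization of
\[
u_\Theta^{M,m}(y) = \mathop{\mathrm{arg\,max}}_{u\in[t_\Theta,m]}\bigl[u\,\psi(\Theta,u)-yu\bigr]
\]
for a strictly concave objective on a compact interval---is valid, and the argmax-stability argument then goes through exactly as you say, provided one has (a) strict concavity of $u\mapsto u\,\psi(\Theta,u)$, (b) joint continuity of $(\Theta,u)\mapsto u\,\psi(\Theta,u)$ on $\overline\cV_M^{\,*,m}$, and (c) continuity of $\Theta\mapsto t_\Theta$. All three are available: (a) is Lemma~\ref{concav}, (b) is Lemma~\ref{concavt}, and (c) follows from \eqref{deft}--\eqref{deftt} since $d_M$-convergence forces $\Delta\Pi$ and the relevant finite window of $\chi$ to be eventually constant.

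The paper takes a different route: rather than recasting \eqref{defdef} as an argmax, it works directly with the one-sided derivative conditions and argues by contradiction. Assuming a subsequence $u_{\Theta_n}^{M,m}(y)\to u_1\neq u_{\Theta_\infty}^{M,m}(y)$, it inserts an intermediate point $u_2$, uses strict concavity to trap the one-sided derivatives between explicit secant slopes, and passes to the limit using only the continuity of $(\Theta,u)\mapsto u\,\psi(\Theta,u)$ (again Lemma~\ref{concavt}). The resulting inequality on $\partial_u^\pm(u\psi)$ contradicts the defining conditions in \eqref{defdef}. Part~(ii) is handled by the same scheme with $\Theta$ fixed. Your approach is more conceptual and reusable; the paper's is more hands-on but avoids having to separately verify that the argmax characterization matches \eqref{defdef}. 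Both rest on the same two ingredients from Appendix~\ref{B}: Lemmas~\ref{concavt} and~\ref{concav}.
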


\begin{proof}
The proof uses the strict concavity of $u\to u\psi(\Theta,u)$ (see Lemma~\ref{B.2}). 

\medskip\noindent
(i) The proof is by contradiction. Pick $y\in \R$, and pick a sequence $(\Theta_n)_{n
\in \N}$ in $\overline\cV_M^{\,m}$ such that $\lim_{n\to\infty} \Theta_n=\Theta_\infty
\in \overline\cV_M^{\,m}$. Suppose that $u_{\Theta_n}^{M,m}(y)$ does not tend to 
$u_{\Theta_\infty}^{M,m}(y)$ as $n\to \infty$. Then, by choosing an appropriate 
subsequence, we may assume that $\lim_{n\to \infty} u_{\Theta_n}^{M,m}(y) = u_1 
\in [t_{\Theta_\infty},m]$ with $u_1<u_{\Theta_\infty}^{M,m}(y)$. The case 
$u_1>u_{\Theta_\infty}^{M,m}(y)$ can be handled similarly.

Pick $u_2\in (u_1,u_{\Theta_\infty}^{M,m}(y))$. For $n$ large enough, we have 
$u_{\Theta_n}^{M,m}(y)< u_2<u_{\Theta_\infty}^{M,m}(y)$. By the definition of 
$u_{\Theta_n}^{M,m}(y)$ in \eqref{defdef} and the strict concavity of $u\mapsto
u \psi(\Theta_n,u)$ we have, for $n$ large enough,
\be{inegstrict}
\partial^+_u (u\,\psi(\Theta_n,u))(u_{\Theta_n}^{M,m}(y))
> \frac{u_{\Theta_\infty}^{M,m}(y)\psi(\Theta_n,u_{\Theta_\infty}^{M,m}(y))
-u_2\psi(\Theta_n,u_2)}{u_{\Theta_\infty}^{M,m}(y)-u_2}.
\ee
Let $n\to \infty$ in \eqref{inegstrict} and use the strict concavity once again, 
to get
\begin{align}
\label{inegstrict1}
\liminf_{n\to \infty} \partial^+_u (u\,\psi(\Theta_n,u))
(u_{\Theta_n}^{M,m}(y))&>\partial^-_u (u\,\psi(\Theta_\infty,u))
(u_{\Theta_\infty}^{M,m}(y)).
\end{align}
If $u_{\Theta_\infty}^{M,m}(y)\in (t_{\Theta_\infty},m]$, then \eqref{defdef} implies 
that the right-hand side of \eqref{inegstrict1} is not smaller than $y$. Hence 
\eqref{inegstrict1} yields that $\partial^+_u (u\,\psi(\Theta_n,u))(u_{\Theta_n}^{M,m}(y))
>y$ for $n$ large enough, which implies that $u_{\Theta_n}^{M,m}(y)=m$ by \eqref{defdef}. 
However, the latter inequality contradicts the fact that $u_{\Theta_n}^{M,m}(y)<u_2
<u_{\Theta_\infty}^{M,m}(y)$ for $n$ large enough. If $u_{\Theta_\infty}^{M,m}(y)
= t_{\Theta_\infty}$, then we note that $\lim_{n\to \infty} t_{\Theta_n}
=t_{\Theta_\infty}$, which again contradicts that $t_{\Theta_n}\leq u_{\Theta_n}^{M,m}(y) 
< u_2<u_{\Theta_\infty}^{M,m}(y)$ for $n$ large enough.

\medskip\noindent
(ii) The proof is again by contradiction. Pick $\Theta \in \overline{\cV}_M^{\,m}$, and pick
an infinite sequence $(y_n)_{n\in \N}$ such that $\lim_{n\to\infty} y_n=y_\infty\in \R$ 
and such that $u_{\Theta}^{M,m}(y_n)$ does not converge to $u_{\Theta}^{M,m}(y_\infty)$. Then, 
by choosing an appropriate subsequence, we may assume that there exists a $u_1 < 
u_{\Theta}^{M,m}(y_\infty)$ such that $\lim_{n\to\infty}u_{\Theta}^{M,m}(y_n)=u_1$. The case
$u_1>u_{\Theta}^{M,m}(y_\infty)$ can be treated similarly. 

Pick $u_2,u_3\in (u_1, u_{\Theta}^{M,m}(y_\infty))$ such that $u_2<u_3$. Then, for $n$ large 
enough, we have 
\be{relat}
t_\Theta\leq u_{\Theta}^{M,m}(y_n)<u_2<u_3<u_{\Theta}^{M,m}(y_\infty)\leq m.
\ee
Combining  \eqref{defdef} and \eqref{relat} with the strict concavity of $u\mapsto 
u\psi(\Theta,u)$ we get, for $n$ large enough, 
\be{relat1}
y_n>\partial^+_u (u\,\psi(\Theta,u))(u_2) >\partial^-_u (u\,\psi(\Theta,u))(u_3)
> y_\infty,
\ee
which contradicts $\lim_{n\to \infty} y_n=y_\infty$.
\end{proof}

We resume the line of proof. Recall that $\rho_{n,1}$, $n\in \N$, charges finitely 
many $\Theta\in\overline{\cV}_M^{\,m}$. Therefore the continuity and the strict concavity 
of $u\mapsto u\psi(\Theta,u)$ on $[t_\Theta,m]$ for all $\Theta\in \overline{\cV}_M^{\,m}$ 
(see Lemma \ref{concav}) imply that the supremum in \eqref{immp} is attained at some
$u_{n}^{M,m}\in \cB_{\,\overline{\cV}_M^{\,m}}$ that satisfies $u_{n}^{M,m}(\Theta) 
= u_\Theta^{M,m}(l_n)$ for $\Theta\in \overline{\cV}_M^{\,m}$. Set $u^{M,m}_{\infty}
(\Theta) = u^{M,m}_{\Theta}(l_\infty)$ for $\Theta\in \overline{\cV}_M^{\,m}$ and note 
that $(l_n)_{n\in \N}$ may be assumed to be monotone, say, non-decreasing. Then the 
concavity of $u\mapsto u\psi(\Theta,u)$ for $\Theta\in \overline{\cV}_M^{\,m}$ implies 
that $(u_n^{M,m})_{n\in\N}$ is a non-increasing sequence of functions on $\overline
\cV_M^{\,m}$. Moreover, $\overline{\cV}_M^{\,m}$ is a compact set and, by Lemma
\ref{conti}(ii), $\lim_{n\to \infty} u_n^{M,m}(\Theta)=u_\infty^{M,m}(\Theta)$ 
for $\Theta\in\overline{\cV}_M^{\,m}$. Therefore Dini's theorem implies that 
$\lim_{n\to\infty} u_n^{M,m}=u_\infty^{M,m}$ uniformly on $\overline{\cV}_M^{\,m}$. 
We estimate
\begin{align}
\label{inegd}
\nonumber&\left|l_n-\int_{\overline\cV_M^{\,m}}  
u^{M,m}_{\infty}(\Theta)\,\psi(\Theta,u^{M,m}_{\infty}(\Theta))
\rho_{\infty}(d\Theta)\right|\\
 &\qquad\leq \int_{\overline\cV_M^{\,m}} \Big|u^{M,m}_{n}(\Theta)\,
\psi(\Theta,u^{M,m}_{n}(\Theta))-u^{M,m}_{\infty}(\Theta)\,
\psi(\Theta,u^{M,m}_{\infty}(\Theta))\Big|\,\rho_{n}(d\Theta)\\
\nonumber&\qquad +\Big|\int_{\overline\cV_M^{\,m}} u^{M,m}_{\infty}(\Theta)\, 
\psi(\Theta,u^{M,m}_{\infty}(\Theta))\,\rho_{n}(d\Theta)
-\int_{\overline\cV_M^{m}} u^{M,m}_{\infty}(\Theta)\,
\psi(\Theta,u^{M,m}_{\infty}(\Theta))\, \rho_{\infty}(d\Theta)\Big|.
\end{align}
The second term in the right-hand side of \eqref{inegd} tends to zero as $n\to\infty$ 
because, by Lemma \ref{conti}(i), $\Theta \mapsto u_{\infty}^{M,m}(\Theta)$ is continuous 
on $\overline{\cV}_M^{\,m}$ and because $\rho_n$ converges in law to $\rho_\infty$ as 
$n\to\infty$. The first term in the right-hand side of \eqref{inegd} tends to zero as 
well, because $(\Theta,u)\mapsto u\psi(\Theta,u)$ is uniformly continuous on 
$\overline{\cV}^{\,*,m}_M$ (see Lemma~\ref{concavt}) and because we have proved above 
that $u^{M,m}_{n}$ converges to $u^{M,m}_{\infty}$ uniformly on $\overline\cV_M^{\,m}$. 
This proves \eqref{eqfin}, and so Step 3 is complete.


\subsubsection{Step 4}
\label{s24alt}

In this step  we prove that 
\be{limsuplar}
\limsup_{n\to \infty} f_{3,n}^{\Omega}(M,m;\alpha,\beta)\geq f(M,m;\alpha,\beta)\, 
\text{ for } \P-a.e.\ \Omega.
\ee
Note that the proof will be complete once we show that 
\be{pr}
\limsup_{n\to \infty} f_{3,n}^\Omega(M,m,
\alpha,\beta)\geq V(\rho,u) \ \text{for} \ \rho\in \cR_{p,M}^m,  
u\in \cB_{\,\overline \cV_M^{\,m}}. 
\ee
Pick $\Omega\in \{A,B\}^{\N_0\times \Z}$, $\rho\in \cR^{\Omega,m}_{p,M}$ and $u\in
\cB_{\,\overline \cV_M^{\,m}}$. By the definition of $\cR_{p,M}^{\Omega,m}$, there exists a 
strictly increasing subsequence $(n_k)_{k\in \N}\in \N^\N$ such that, for all 
$k\in \N$, there exists an 
\be{Nkexist}
N_k\in\left\{\frac{n_k}{m L_{n_k}},\dots,
\frac{n_k}{L_{n_k}}\right\},
\ee
a $\Theta_{\text{traj}}^k\in \widetilde{\cD}_{L_{n_k},N_k}^M$ and a $x^k\in
\cX_{\Theta_{\text{traj}}^k,\Omega}^{M,m}$ such that $\rho_k=^\mathrm{def}
\rho_{\Theta_{\text{traj}}^k,x^k}^\Omega$ (see \eqref{defrho2}) converges in 
law to $\rho$ as $k\to\infty$. Recall \eqref{defD2}, and note that 
\begin{align}
\Xi_j^k
=\big(\Delta\Pi^k_j,\,b^k_j,\,b_{j+1}^k\big),
\quad \text{$j=0,\dots,N_k-1$},
\end{align}
with $\Delta\Pi^k_j\in \{-M,\dots,M\}$ and $b_j^k\in (0,1]
\cap\frac{\N}{L_{n_k}}$ for $j=0,\dots,N_k$. For ease of notation we define 
\be{notat}
\Theta_{j}^k
=\big(\Omega(j,\Pi^k_j+\cdot),\Xi_j^k,x_j^k\big)\quad \text{with} \quad 
\Pi_j^k=\sum_{i=0}^{j-1} \Delta\Pi_i^k, \qquad  j=0,\dots,N_k-1,
\ee
and
\be{defv}
v_k=N_k \int_{\Theta\in \cV_M^{\,m}} u_\Theta\,\rho_{k,1}(d\Theta)
= \sum_{j=0}^{N_k-1} u_{\Theta_j^k},
\ee
where we recall that $u=(u_\Theta)_{\Theta\in \overline\cV_M^{\,m}}$ was fixed at the 
beginning of the section.

Next, we recall that $\lim_{n\to\infty} n/L_n=\infty$ and that $L_n$ is non-decreasing. 
Together with the fact that $\lim_{n\to\infty} L_n/n=0$, this implies that $L_n$ is 
constant on intervals. On those intervals, $n/L_n$ takes constant increments. The latter 
implies that there exists an $\widetilde{n}_k\in \N$ satisfying
\be{deftil}
0\leq v_k-\tfrac{\widetilde{n}_k}{L_{\widetilde{n}_k}} 
\leq \tfrac{1}{L_{\widetilde{n}_k}}
\quad \text {and therefore}\quad 
0\leq  v_k L_{\widetilde{n}_k}-\widetilde{n}_k \leq 1.
\ee
Next, for $j=0,\dots,N_k-1$ we pick $\overline{b_j^k}\in (0,1] \cap 
\frac{N}{L_{\widetilde{n}_k}}$ such that $|\overline{b_j^k}-b_j^k|\leq 
\tfrac{1}{L_{\widetilde{n}_k}}$, define
\be{defbar}
\overline{\Xi_j^k} = \big(\Delta \Pi^k_j,
\overline{b_j^k},\overline{b_{j+1}^k}\,\big), 
\quad \overline{\Theta_j^k}
= \big(\Omega(j,\Pi^k_j+\cdot),\overline{\Xi_j^k},x_j^k\,\big),
\ee
and pick
\be{defs}
s_j^k\in t_{\,\overline{\Theta_j^k}}+\frac{2\N}{L_{\widetilde{n}_k}}\quad
\text{such that} \quad |s_j^k-u_{\Theta_j^k}|\leq 2/L_{\widetilde{n}_k}.
\ee
We use \eqref{defv} to write
\be{estis}
L_{\widetilde{n}_k} \sum_{j=0}^{N_k-1} s^k_j=L_{\widetilde{n}_k} 
\bigg(v_k +\sum_{j=0}^{N_k-1} (s_j^k-u_{\Theta_j^k})\bigg)
= L_{\widetilde{n}_k} (I+II).
\ee
Next, we note that \eqref{deftil} and \eqref{defs} imply that $|L_{\widetilde{n}_k} 
I-\widetilde{n}_k|\leq 1$ and $|L_{\widetilde{n}_k} II|\leq 2 N_k$. The latter 
in turn implies that, by adding or subtracting at most 3 steps per colum, the 
quantities $s_j^k$ for $j=0,\dots,N_k-1$ can be chosen in such a way that 
$\sum_{j=0}^{N_k-1} s_j^k=\widetilde{n}_k/L_{\widetilde{n}_k}$.

Next, set 
\be{}
\overline{\Theta_{\text{traj}}^k}=(\overline{\Xi_j^k})_{j=0}^{N_k-1}
\in \widetilde{\cD}_{L_{\widetilde{n}_k},N_k }^M,
\quad s^k=(s^k_j)_{j=0}^{N_k-1}
\in \cU_{\overline{\Theta_{\text{traj}}^k},\,
x^k,\widetilde{n}_k}^{M,m,L_{\widetilde{n}_k}},
\ee
and recall \eqref{inteerm1} to get $f_3^\Omega(\widetilde{n}_k,M)\geq R_k$ with
\be{rk}
R_k=\frac{L_{\widetilde{n}_k} \,
H^\Omega\big(\,\overline{\Theta_{\text{traj}}^k},x^k,s^k\,\big)}
{\widetilde{n}_k}= \frac{\sum_{j=0}^{N_k-1}\, s_j^k\,
\psi\Big(\overline{\Theta_j^k},\,s_j^k\Big)}{\sum_{j=0}^{N_k-1} s_j^k}
=\frac{R_{\text{nu}}^k}{R_{\text{de}}^k}.
\ee
Further set
\be{defR} 
R^{'}_k=\frac{R^{' k}_{\text{nu}}}{R^{' k}_{\text{de}}}
=\frac{\int_{\overline\cV_M^{\,m}} u_\Theta\, \psi(\Theta,u_\Theta) 
\rho_{k}(d\Theta)}{\int_{\overline\cV_M^{\,m}} u_\Theta \,\rho_{k}(d\Theta)},
\ee
and note that $\lim_{k\to\infty} R^{'}_k=V(\rho,u)$, since $\lim_{k\to\infty}
\rho_k=\rho$ by assumption and $\Theta\mapsto u_{\Theta}$ is continuous on 
$\cV_M^{\,m}$. We note that $R^{'}_k$ can be rewritten in the form 
\be{defRalt} 
R^{'}_k=\frac{R^{' k}_{\text{nu}}}{R^{' k}_{\text{de}}}
= \frac{\sum_{j=0}^{N_k-1}\, u_{\Theta_j^k}\,
\psi\big(\Theta_j^k,\,u_{\Theta_j^k}\big)}
{\sum_{j=0}^{N_k-1}\, u_{\Theta_j^k}}.
\ee
Now recall that $\lim_{k\to \infty} n_k=\infty$. Since $N_k\geq n_k/M L_{n_k}$, it 
follows that $\lim_{k\to \infty} N_k=\infty$ as well. Moreover, $N_k\leq\widetilde{n}_k
/L_{\widetilde{n}_k}$ with $\lim_{k\to\infty} \widetilde{n}_k=\infty$. Therefore 
(\ref{defv}--\ref{deftil}) allow us to conclude that $R_{\text{de}}^k
=\widetilde{n}_k/L_{\widetilde{n}_k}=R^{'k}_{\text{de}}[1+o(1)]$.

Next, note that $\cH_M$ is compact, and that $(\Theta,u)\mapsto u \psi(\Theta,u)$ is 
continuous on $\cH_M$ and therefore is uniformly continuous. Consequently, for 
all $\gep>0$ there exists an $\eta>0$ such that, for all $(\Theta,u),(\Theta^{'},u^{'})
\in \cH_M$ satisfying $|\Theta-\Theta^{'}|\leq \eta$ and $|u-u^{'}|\leq \eta$,
\be{unifcont}
|u\psi(\Theta,u)-u^{'}\psi(\Theta^{'},u^{'})|\leq \gep.
\ee
We recall \eqref{defbar}, which implies that $d_M(\overline{\Theta_j^k},\Theta_j)
\leq 2/L_{\widetilde{n}_k}$ for all $j\in \{0,\dots,N_k-1\}$, we choose $k$ large enough to 
ensure that $2/L_{\widetilde{n}_k}\leq \eta$, and we use \eqref{unifcont}, to obtain
\be{equa} 
R^{k}_{\text{nu}}=\sum_{j=0}^{N_k-1}\, 
s_j^k\, \psi\Big(\overline{\Theta_j^k},\,s_j^k\Big)
= \sum_{j=0}^{N_k-1}\, u_{\Theta_j^k}\, 
\psi\big(\Theta_j^k,\,u_{\Theta_j^k}\big)+T=R^{' k}_{\text{nu}}+T,
\ee
with $|T| \leq \gep N_k$. Since $\lim_{k\to\infty} R^{'}_k=V(\rho,u)$ and 
$\sum_{j=0}^{N_k-1} u_{\Theta_j^k}=v_k\geq \widetilde{n}_k/L_{\widetilde{n}_k}$ 
(see \eqref{deftil}), if $ V(\rho,u)\neq 0$, then $\big|R^{' k}_{\text{nu}}\big|
\geq \mathrm{Cst.}\,\,\widetilde{n}_k/L_{\widetilde{n}_k}$, whereas $|T|\leq \gep N_k
\leq  \gep \widetilde{n}_k/L_{\widetilde{n}_k} $ for $k$ large enough. Hence 
$T=o(R^{' k}_{\text{nu}})$ and 
\be{fina}
\frac{R^{k}_{\text{nu}}}{R^{k}_{\text{de}}}
= \frac{R^{' k}_{\text{nu}}\, [1+o(1)]}{R^{'k}_{\text{de}}\,
[1+o(1)]}\to V(\rho,u), \qquad k\to \infty. 
\ee
Finally, if $ V(\rho,u)= 0$, then $R^{' k}_{\text{nu}}=o(R^{'k}_{\text{de}})$ and 
$T=o(R^{'k}_{\text{de}})$, so that $R_k$ tends to $0$. This completes the proof
of Step 4.


\subsubsection{Step 5}
\label{s23alt}

In this step we prove \eqref{convco1}, suppressing the $(\alpha,\beta)$-dependence 
from the notation. For $\Omega\in \{A,B\}^{\N_0\times \Z^2}$, $n\in \N$, $N\in
\{n/m L_n,\dots,n/L_n\}$ and $r\in \{-N M,\dots,N M\}$, we recall \eqref{defD2} and 
define
\be{defDD2alt}
\widetilde{\cD}_{L,N}^{M,m,r}= \Big\{\Theta_{\text{traj}}
\in \widetilde{\cD}_{L,N}^{M,m}\colon \Pi_N=r\Big\},
\ee
where we recall that $\Pi_N=\sum_{j=0}^{N-1} \Delta\Pi_j$. We set 
\begin{equation}
\label{intermalt}
f_{3,n}^\Omega(M,m,N,r) = \tfrac{1}{n} \log Z_{3,n,L_n}^{\Omega}(N,M,m,r)
\ee
with
\be{iiiter}
Z_{3,n,L_n}^{\Omega}(N,M,m,r)
= \sum_{\Theta_{\text{traj}}\in \widetilde{\cD}_{L_n,N}^{M,m,r} }
\,\sum_{x\in \cX_{\Theta_{\text{traj}},\Omega}^{M,m} }
\,\sum_{u\in \,\cU_{\Theta_{\text{traj}},n}^{\,M,m,L_n}} A_3,
\end{equation}
where $A_3$ is defined in \eqref{defa23}. We further set $f_3(\cdot)=\E_\Omega
\big(f_3^\Omega(\cdot)\big)$.


\subsubsection{Concentration of measure} 

In the first part of this step we prove that  for all $(M,m,\alpha,\beta)\in
\EIGH\times \CONE$ there exist $c_1,c_2>0$ (depending on $(M,m,\alpha,\beta)$ only) 
such that, for all  $n\in \N$,  $N\in \{n/(m L_n),\dots n/L_n\}$ and $r\in
\{-N M,\dots,N M\}$, 
\begin{align}
\label{conco}
&\P_{\Omega}\big(\big|f_{3,n}^{\Omega}(M,m)-f_{3,n}(M,m)\big|>\gep\big)
\leq c_1\  e^{-\frac{c_2 \gep^2 n}{L_n}}, \qquad \\
\nonumber 
&\P_{\Omega}\big(\big|f_{3,n}^{\Omega}(M,m,N,r)-f_{3,n}(M,m,N,r)\big|>\gep\big)
\leq c_1\  e^{-\frac{c_2 \gep^2  n}{L_n}}.
\end{align} 
We only give the proof of the first inequality. The second inequality is proved 
in a similar manner. The proof uses Theorem~\ref{theoco}. Before we start we note 
that, for all $n\in \N$, $(M,m)\in \EIGH$ and $\Omega\in \{A,B\}^{\N_0\times \Z}$, 
$f_{3,n}^\Omega(M,m)$ only depends on 
\be{defC}
\cC_{0,L_n}^\Omega,\dots,\cC_{n/L_n,L_n}^\Omega \quad \text{with}\quad 
\cC_{j,L_n}^\Omega=(\Omega(j,i))_{i=-n/L_n}^{n/L_n}.
\ee
We apply Theorem~\ref{theoco} with $\cS=\{0,\dots,n/L_n\}$, with  $X_i=\{A,B\}^{\{
-\frac{n}{L_n},\dots,\frac{n}{L_n}\}}$ and with $\mu_i$ the uniform measure on 
$X_i$ for all $i\in \cS$. Note that $|f_{3,n}^{\Omega_1}(M,m)-f_{3,n}^{\Omega_2}(M,m)|
\leq 2 C_{\text{uf}}(\alpha) m \tfrac{L_n}{n}$ for all $i\in \cS$ and all 
$\Omega_1,\Omega_2$ satisfying $\cC_{j,n}^{\Omega_1}=\cC_{j,n}^{\Omega_2}$ 
for all $j\neq i$. After we set $c=2 C_{\text{uf}}(\alpha) m$ we can apply 
Theorem~\ref{theoco} with $D=c^2 L_n/n$ to get \eqref{conco}. 

Next, we note that the first inequality in \eqref{conco}, the Borel-Cantelli lemma 
and the fact that $\lim_{n\to \infty} n/L_n \log n = \infty$ imply that, for all 
$(M,m)\in \EIGH$,
\be{limialt}
\lim_{n\to \infty} \Big[f_{3,n}^\Omega(M,m)- f_{3,n}(M,m)\Big]=0 
\quad \text{for } \P-a.e.~\Omega.
\ee 
Therefore \eqref{convco1} will be proved once we show that 
\be{fin1}
\liminf_{n\to \infty} f_{3,n}(M,m)
= \limsup_{n\to \infty} f_{3,n}(M,m). 
\ee
To that end, we first prove that, for all $n\in \N$ and all $(M,m)\in \EIGH$, there exist 
an $N_n\in \{n/m L_n,\dots,n/L_n\}$ and an $r_n\in \{-M N_n,\dots,M N_n\}$ such that 
\be{fin2}
\lim_{n\to \infty} \Big[f_{3,n}(M,m)-f_{3,n}(M,m,N_n,r_n)\Big] = 0.
\ee
The proof of \eqref{fin2} is done as follows. Pick $\gep>0$, and for $\Omega\in
\{A,B\}^{\N_0\times \Z}$, $n\in \N$ and $(M,m)\in \EIGH$, denote by $N_n^\Omega$ 
and $r_n^{\Omega}$ the maximizers of $f^\Omega_{3,n}(M,m,N,r)$. Then 
\be{rrde}
f^\Omega_{3,n} \big(M,m,N_n^{\Omega},r_n^{\Omega}\big)\leq f^\Omega_{3,n}(M,m)
\leq  \tfrac1n \log (\tfrac{n^2}{L_n^2})
+ f^\Omega_{3,n}\big(M,m,N_n^{\Omega},r_n^{\Omega}\big),
\ee
so that, for $n$ large enough and every $\Omega$, 
\be{recty}
0\leq f^\Omega_{3,n}(M,m)-f^\Omega_{3,n}\big(M,m,N_n^{\Omega},r_n^{\Omega}\big)\leq \gep.
\ee
For $n\in \N$, $N\in \{n/m L_n,\dots, n/L_n\}$ and $r\in \{-N M,\dots, N M\},$ we set
\be{defAnNr}
A_{n,N,r}=\{\Omega\colon (N_n^\Omega,r_n^\Omega)=(N,r)\}.
\ee
Next, denote by $N_n,r_n$ the maximizers of $\P(A_{n,N,r})$. Note that \eqref{fin2} 
will be proved once we show that, for all $\gep>0$, $|f_{3,n}(M,m)-f_{3,n}(M,m,N_n,r_n)|\leq
\gep$ for $n$ large enough. Further note that $\P(A_{n,N_n,r_n})\geq L_n^2/n^2$ for 
all $n\in \N$. For every $\Omega$ we can therefore estimate
\be{iimp1}
|f_{3,n}(M,m)-f_{3,n}(M,m,N_n,r_n)|\leq I+II+III
\ee 
with
\begin{align}
\label{iimp}
&I=|f_{3,n}(M,m)-f_{3,n}^\Omega(M,m)|,\\
\nonumber & II=|f_{3,n}^\Omega(M,m)-f_{3,n}^\Omega(M,m,N_n,r_n)|, \\
\nonumber &III=|f_{3,n}^\Omega(M,m,N_n,r_n)-f_{3,n}(M,m,N_n,r_n)|.
\end{align}
Hence, the proof of \eqref{fin2} will be complete once we show that, for $n$ large 
enough, there exists an $\Omega_{\gep,n}$ for which $I,II$ and $III$ in \eqref{iimp} 
are bounded from above by $\gep/3$.

To that end, note that, because of \eqref{conco}, the probabilities $\P(\{I> \gep/3\})$ 
and $\P(\{III> \gep/3\})$ are bounded from above by $c_1 e^{-c_2 \gep^2 n/9 L_n}$, 
while 
\be{boundA}
\P(\{II>\gep\})\leq \P(A_{n,N_n,r_n}^c)\leq 1-(L_n^2/n^2), \qquad n\in \N.
\ee
Since $\lim_{n\to \infty} n/L_n \log n=\infty$, we have $\P(\{I,II,III \leq \gep/3\})>0$ 
for $n$ large enough. Consequently, the set $\{I,II,III\leq \gep/3\}$ is non-empty 
and \eqref{fin2} is proven.


\subsubsection{Convergence}

It remains to prove \eqref{fin1}. Assume that there exist two strictly increasing subsequences 
$(n_k)_{k\in\N}$ and $(t_k)_{k\in\N}$ and two limits $l_2>l_1$ such that $\lim_{k\to\infty} 
f_{3,n_k}(M,m)=l_2$ and $\lim_{k\to\infty}$ $f_{3,t_k}(M,m) =l_1$. By using \eqref{fin2}, we 
have that for every $k\in\N$ there exist $N_k\in\{n_k/m L_{n_k},$ $\dots,n_k/L_{n_k}\}$ and 
$r_k\in \{-M N_k,\dots,M N_k\}$ such that $\lim_{k\to\infty}$ $f_{3,n_k}(M,m,N_k,r_k)
=l_2$. Denote by 
\be{tdt}
(\Theta_{\text{traj,max}}^{k,\Omega},
x_{\text{max}}^{k,\Omega},u_{\text{max}}^{k,\Omega})\in
\widetilde{\cD}_{L_{n_k},N_k}^{M,r_k}\times 
\cX_{\Theta_{\text{traj,max}}^{k,\Omega},\Omega}^{M,m}
\times  \cU_{\Theta_{\text{traj,max}}^{k,\Omega},
x_{\text{max}}^{k,\Omega},n_k}^{\,M,m,L_n}
\ee
the maximizer of $H^\Omega(\Theta_{\text{traj}},x,u)$. We recall that $\Theta_{\text{traj}},
x$ and $u$ take their values in sets that grow subexponentially fast in $n_k$, and therefore
\be{equan}
\lim_{k\to \infty} \tfrac{L_{n_k}}{n_k}\,
\E_\Omega\big[H^\Omega(\Theta_{\text{traj,max}}^{k,\Omega},x_{\text{max}}^{k,\Omega},
u_{\text{max}}^{k,\Omega})\big]=l_2.
\ee
Since $l_2>l_1$, we can use \eqref{equan} and the fact that $\lim_{k\to \infty} n_k/L_{n_k}
=\infty$ to obtain, for $k$ large enough, 
\be{equan1}
\E_\Omega\big[H^\Omega(\Theta_{\text{traj,max}}^{k,\Omega},
x_{\text{max}}^{k,\Omega},u_{\text{max}}^{k,\Omega})\big]
+(\beta-\alpha)\geq \tfrac{n_k}{L_{n_k}} \big(l_1+\tfrac{l_2-l_1}{2}\big).
\ee
(The term $\beta-\alpha$ in the left-hand side of \eqref{equan1} is introduced for 
later convenience only.) Next, pick $k_0\in \N$ satisfying \eqref{equan1}, whose value 
will be specified later. Similarly to what we did in \eqref{defs} and \eqref{estis}, 
for $\Omega\in \{A,B\}^{\N_0\times \Z}$ and $k\in \N$ we associate with 
\be{asso}
\Theta_{\text{traj,max}}^{k_0,\Omega}
=\big(\Delta\Pi_{\,j}^{k_0,\Omega}, b^{k_0,\Omega}_{\,0,j}, 
b_{\,1,j}^{k_0,\Omega}\big)_{j=0}^{N_{k_0}-1}\in 
\widetilde{\cD}_{L_{n_{k_0}},N_{k_0}}^{M,r_{k_0}}
\ee
and 
\be{assocc}
x_{\text{max}}^{k_0,\Omega}=\big(x_{\,j}^{k_0,\Omega}\big)_{j=0}^{N_{k_0}-1}
\in \cX_{\Theta_{\text{traj,max}}^{k_0,\Omega},\Omega}^{M,m}
\ee
and 
\be{assoc}
u_{\text{max}}^{k_0,\Omega}=\big(u_{\,j}^{k_0,\Omega}\big)_{j=0}^{N_{k_0}-1}
\in \cU_{\Theta_{\text{traj,max}}^{k_0,\Omega},
x_{\text{max}}^{k_0,\Omega},n_{k_0}}^{M,m,L_{n_{k_0}}}
\ee
the quantities
\be{defTh}
\overline\Theta^{\,k,\Omega}_{\text{traj}}
= \big(\Delta\Pi_{\,j}^{k_0,\Omega}, \overline b_{\,0,j}^{\,k,\,\Omega}, 
\overline b_{\,1,j}^{\,k,\,\Omega}\big)_{j=0}^{N_{k_0}-1}
\in \widetilde{\cD}_{L_{t_{k}},N_{k_0}}^{M,r_{k_0}}
\ee  
and 
\be{assoc1}
\overline u^{\,k,\Omega}=\big(\overline u_{\,j}^{\,k,\Omega}\big)_{j=0}^{N_{k_0}-1}
\in \cU_{\overline\Theta_{\text{traj}}^{\,k,\Omega},
x_{\text{max}}^{k_0,\Omega},*}^{M,m,L_{t_{k}}}
\ee
(where $*$ will be specified later), so that 
\be{supe}
\big|\overline b_{\,0,j}^{\,k,\,\Omega}-b_{\,0,j}^{k_0,\Omega}
\big|\leq \tfrac{1}{L_{t_k}},\ \ 
\big|\overline b_{\,1,j}^{\,k,\,\Omega}-b_{\,1,j}^{k_0,\Omega}\big|
\leq \tfrac{1}{L_{t_k}},
\ \ \ \ \big|\overline u_{\,j}^{\,k,\,\Omega}-u_{\,j}^{k_0,\Omega}\big|
\leq \tfrac{2}{L_{t_k}}, 
\ \ \ j=0,\dots,N_{k_0}-1.
\ee
Next, put $\overline s_{k}^{\,\Omega}=L_{t_k} \sum_{j=0}^{N_{k_0}-1} 
\overline u_j^{\,k,\,\Omega}$, which we substitute for $*$ above. The uniform 
continuity in Lemma~\ref{concavt} allows us to claim that, for $k$ large enough 
and for all $\Omega$,
\be{trt}
\Big|\overline u_j^{\,k,\,\Omega} \ \psi\Big(\overline\Theta_j^{\,k,\,\Omega},
\overline u_j^{\,k,\,\Omega}\Big)
-u_{\,j}^{k_0,\Omega}\  \psi\Big(\Theta_{\,j}^{k_0,\Omega},
u_{\,j}^{k_0,\Omega}\Big)\Big|\leq \tfrac{l_2-l_1}{4},
\ee
where we recall that, as in \eqref{notat}, for all $j=0,\dots,N_{k_0}-1$,  
\begin{align}
\label{restr}
\overline\Theta_j^{\,k,\,\Omega}
&=\Big(\Omega\big(j,\Pi^{k_0,\Omega}_{\,j}+\cdot\big),\,\Delta\Pi_{\,j}^{k_0,\Omega}, 
\,\overline b_{\,0,j}^{\,k,\,\Omega},\, 
\overline b_{\,1,j}^{\,k,\,\Omega},\,x_j^{k_0,\Omega}\Big),\\
\nonumber \Theta_{\,j}^{k_0,\Omega}
&=\Big(\Omega\big(j,\Pi^{k_0,\Omega}_{\,j}+\cdot\big),\,\Delta\Pi_j^{k_0,\Omega}, 
\,b_{\,0,j}^{k_0,\Omega},\, b_{\,1,j}^{k_0,\Omega},\,x_j^{k_0,\Omega}\Big).
\end{align}
Recall \eqref{defHam}. An immediate consequence of \eqref{trt} is that 
\be{reft}
\big|H^\Omega(\overline\Theta_{\text{traj}}^{\,k,\Omega},
x_{\text{max}}^{k_0,\Omega},\overline u^{\,k,\Omega})
-H^\Omega(\Theta_{\text{traj,max}}^{k_0,\Omega},
x_{\text{max}}^{k_0,\Omega},u_{\text{max}}^{k_0,\Omega})\big|
\leq N_{k_0} \tfrac{l_2-l_1}{4}.
\ee
Hence we can use \eqref{equan1}, \eqref{reft} and the fact that $N_{k_0}\leq 
n_{k_0}/L_{n_{k_0}}$, to conclude that, for $k$ large enough,
\be{equan2}
\E_\Omega\big[H^\Omega(\overline\Theta_{\text{traj}}^{\,k,\Omega},
x_{\text{max}}^{k_0,\Omega},\overline u^{\,k,\Omega})\big]+(\beta-\alpha)
\geq \tfrac{n_{k_0}}{L_{n_{k_0}}}\big(l_1+\tfrac{l_2-l_1}{4}\big).
\ee

At this stage we add a column at the end of the group of $N_{k_0}$ columns in such 
a way that the conditions $\widehat b^{k,\Omega}_{1, N_{k_0}-1}=\widehat 
b^{k,\Omega}_{0, N_{k_0}}$ and $\widehat b^{k,\Omega}_{1, N_{k_0}}=1/L_{t_k}$ 
are satisfied. We put 
\be{defsk}
\widehat{\Xi}_{N_{k_0}}^{k,\,\Omega}
=\big(\Delta \Pi_{N_{{k_0}}}^{k_0,\Omega},\widehat b_{0,N_{k_0}}^{k,\,\Omega},
\widehat b_{1,N_{k_0}}^{k,\,\Omega}\big)=\big(0,\widehat b_{1,N_{k_0}-1}^{k,\,\Omega},
\tfrac{1}{L_{t_k}}\big),
\ee
and we let $\widehat{\Theta}_{\text{traj}}^{k,\,\Omega}\in \widetilde{\cD}_{L_{t_k},
\,N_{k_0}+1}^{M,\,r_{k_0}}$ be the concatenation of $\overline \Theta_{\text{traj}}^{k,
\Omega}$ (see \eqref{defTh}) and $\widehat{\Xi}_{N_{k_0}}^{k,\Omega}$. We let 
$\widehat{x}^{k_0,\Omega}\in\cX_{\widehat\Theta_{\text{traj}}^{k,\Omega},\Omega}^{M,m}$ 
be the concatenation of $x_{\text{max}}^{k_0,\Omega}$ and $0$. We further let 
\be{defshat}
\widehat{s}_k^{\,\Omega}=\overline s_k^{\,\Omega}+\Big[1+b_{1,N_{k_0}-1}^{k,\Omega}
-\tfrac{1}{L_{t_k}}\Big] L_{t_k},
\ee 
and we let $\widehat{u}^{k,\Omega}\in\cU_{\,\widehat{\Theta}_{\text{traj}}^{k,\
\Omega},\, \widehat x^{k_0,\Omega},\,\widehat{s}_k^{\,\Omega}}^{M,m,\,L_{t_k}}$ be 
the concatenation of $\overline u^{\,k,\Omega}$ (see \eqref{assoc1}) and
\be{defunk0}
\widehat u_{N_{k_0}}^{k,\Omega}=1+(b_{1,N_{k_0}-1}^{k,\Omega}-\tfrac{1}{L_{t_k}}).
\ee
Next, we note that the right-most inequality in \eqref{supe}, together with the fact 
that
\be{}
\sum_{j=0}^{N_{k_{0}}-1} u_{\,j}^{k_0,\Omega} = n_{k_0}/L_{n_{k_0}},
\ee 
allow us to asset that $|\overline s_k^{\,\Omega}-L_{t_k} n_{k_0}/L_{n_{k_0}}|
\leq 2 N_{k_0}$. Therefore the definition of $\widehat{s}_k^{\,\Omega}$ in 
\eqref{defshat} implies that  
\be{impo}
\widehat{s}_k^{\,\Omega}=L_{t_k} \frac{n_{k_0}}{L_{n_{k_0}}}
+\widehat{m}_k^\Omega \quad \text{with} \quad |\widehat{m}_k^{\Omega}|
\leq 2 N_{k_0}+2 L_{t_k}.
\ee
Moreover,
\be{httraj}
H^\Omega\big(\widehat{\Theta}_{\text{traj}}^{k,\Omega},
\widehat x^{k_0,\Omega},\widehat{u}^{k,\Omega}\big)
\geq H^\Omega \big(\overline\Theta_{\text{traj}}^{\,k,\Omega},
x_{\text{max}}^{k_0,\Omega}, \overline u^{\,k,\Omega}\big)
+(\beta-\alpha),
\ee
because $\widehat u_{N_{k_0}}^{k,\Omega}\leq 2$ by definition (see \eqref{defunk0}) 
and the free energies per columns are all bounded from below by $(\beta-\alpha)/2$. 
Hence, \eqref{equan2} and \eqref{httraj} give that for all $\Omega$ there exist a
\be{redefTh}
\widehat{\Theta}^{k,\Omega}_{\text{traj}}\in
\widetilde{\cD}_{L_{t_{k}},\,N_{k_0}+1}^{\,M,\,r_{k_0}}
\colon\,b_{1,N_{k_0}}=\tfrac{1}{L_{t_k}},
\ee
an $\widehat{x}^{k_0,\Omega}\in\cX_{\widehat\Theta_{\text{traj}}^{k,\Omega},\Omega}^{M,m}$
and a $\widehat{u}^{k,\Omega}\in\cU_{\,\widehat{\Theta}_{\text{traj}}^{k,\,\Omega},\,
\widehat{x}^{k_0,\Omega},\,\widehat{s}_k^{\,\Omega}}^{M,m,\,L_{t_k}}$ such that, 
for $k$ large enough,
\be{rth}
\E_{\,\Omega}\big[H(\widehat{\Theta}_{\text{traj}}^{k,\Omega},
\widehat x^{k_0,\Omega},
\widehat{u}^{k,\Omega})\big]\geq \tfrac{n_{k_0}}{L_{n_{k_0}}}
\big(l_1+\tfrac{l_2-l_1}{4}).
\ee

Next, we subdivide the disorder $\Omega$ into groups of $N_{k_0}+1$ consecutive columns 
that are successively translated by $r_{k_0}$ in the vertical direction, i.e.,
$\Omega=(\Omega_1,\Omega_2,\dots)$ with (recall \eqref{add1})
\be{defO}
\Omega_j=\big(\Omega(i,\,(j-1)\, r_{k_0}+\cdot)\big)_{i=(j-1)
(N_{k_0}+1)}^{\,j (N_{k_0}+1)-1},
\ee
and we let $q_k^\Omega$  be the unique integer satisfying 
\be{defqo}
\widehat{s}_k^{\, \Omega_1}+\widehat{s}_k^{\, \Omega_2}+ \dots +
\widehat{s}_k^{\,\Omega_{q_k}}\leq t_k
<\widehat{s}_k^{\,\Omega_1}+\dots+\widehat{s}_k^{\,\Omega_{q_k+1}},
\ee
where we suppress the $\Omega$-dependence of $q_k$. We recall that
\begin{equation}
\label{interm1alt}
f_{3,t_k}^\Omega(M,m) =
\E\Bigg[\frac{1}{t_k} \log \sum_{N=t_k/m L_{t_k}}^{t_k/L_{t_k}}\  
\sum_{\Theta_{\text{traj}}\in \widetilde{\cD}_{L_{t_k},N}^{M}}\
\,\sum_{x\in \cX_{\Theta_{\text{traj}},\Omega}^{M,m} } 
\,\sum_{u\in\,\cU_{\,\Theta_{\text{traj}},\,x,\,t_k}^{M,m,\,L_{t_k}}} e^{L_{t_k}\,
H^\Omega(\Theta_{\text{traj}},x,u)}\Bigg],
\end{equation}
set $\widetilde{t}_k^{\,\Omega}=\widehat{s}_k^{\,\Omega_1}+\widehat{s}_k^{\,\Omega_2}
+\dots+\widehat{s}_k^{\,\Omega_{q_k}}$, and concatenate 
\be{conca}
\widehat{\Theta}^{k,\Omega}_{\text{traj,tot}}
=\Big(\widehat{\Theta}^{k,\Omega_1}_{\text{traj}},
\widehat{\Theta}^{k,\Omega_2}_{\text{traj}},\dots,
\widehat{\Theta}^{k,\Omega_{q_k}}_{\text{traj}}\Big) 
\in \widetilde{\cD}_{L_{t_k},\,q_k (N_{k_0}+1),\,}^{M,}
\ee
and
\be{concacat}
\widehat{x}^{k,\Omega}_{\text{tot}}=\big(\widehat x^{k_0,\Omega_1},
\widehat x^{k_0,\Omega_2},\dots,\widehat x^{k_0,\Omega_{q_k}}\big) 
\in \cX_{\widehat{\Theta}^{k,\Omega}_{\text{traj,tot}}
\Omega}^{M,m}.
\ee
and 
\be{concaca}
\widehat{u}^{k,\Omega}_{\text{tot}}=\big(\widehat{u}^{k,\Omega_1},
\widehat{u}^{k,\Omega_2},\dots,\widehat{u}^{k,\Omega_{q_k}}\big) 
\in \cU_{\widehat{\Theta}^{k,\Omega}_{\text{traj,tot}},
\widehat{x}^{k,\Omega}_{\text{tot}},
\widetilde{t}_k^{\,\Omega}}^{M,m,L_{t_k}}.
\ee
It still remains to complete $\widehat{\Theta}^{k,\Omega}_{\text{traj,tot}}$, 
$\widehat{x}^{k,\Omega}_{\text{tot}}$ and $\widehat{u}^{k,\Omega}_{\text{tot}}$ 
such that the latter becomes an element of $\cU_{\widehat{\Theta}^{k,
\Omega}_{\text{traj,tot}},\widehat{x}^{k,\Omega}_{\text{tot}},t_k}^{M,m,L_{t_k}}$. 
To that end, we recall \eqref{defqo}, which gives $t_k-\widetilde{t}_k^{\,\Omega}
\leq \widehat{s}_k^{\Omega_{q_k+1}}$. Then, using \eqref{impo}, we have that 
there exists a $c>0$ such that 
\be{bor}
t_k-\widetilde{t}_k^{\,\Omega}\leq c L_{t_k} \tfrac{n_{k_0}}{L_{n_{k_0}}}.
\ee
Therefore we can complete $\widehat{\Theta}^{k,\Omega}_{\text{traj,tot}}$, 
$\widehat{x}^{k,\Omega}_{\text{tot}}$ and $\widehat{u}^{k,\Omega}_{\text{tot}}$ 
with
\be{}
\Theta_{\text{rest}}\in \cD_{L_{t_k},\,g_k^\Omega}^M,
\qquad  x_{\text{rest}}\in\cX_{\Theta_{\text{rest}},\Omega}^{M,m},
\qquad u_{\text{rest}}\in\cU_{\Theta_{\text{rest}}, x_{\text{rest}}, t_k
-\widetilde{t}_k^{\,\Omega}}^{M,m,L_{t_k}},
\ee 
such that, by \eqref{bor}, the number of columns $g_{k}^\Omega$ involved in 
$\Theta_{\text{rest}}$ satisfies $g_k^\Omega\leq c n_{k_0}/L_{n_{k_0}}$. 
Henceforth $\widehat{\Theta}^{k,\Omega}_{\text{traj,tot}}$, $\widehat{x}^{k,\Omega}_{
\text{tot}}$ and $\widehat{u}^{k,\Omega}_{\text{tot}}$ stand for the quantities defined 
in \eqref{conca} and \eqref{concaca}, and concatenated with $\Theta_{\text{rest}}, 
x_{\text{rest}}$ and $u_{\text{rest}}$ so that they become elements of 
\be{}
\cD_{L_{t_k},\,q_k (N_{k_0}+1)+g_k^\Omega}^M,
\qquad \cX_{\widehat{\Theta}^{k,\Omega}_{\text{traj,tot}},\Omega}^{M,m},
\qquad\cU_{\widehat{\Theta}^{k,\Omega}_{\text{traj,tot}},
\widehat{x}^{k,\Omega}_{\text{tot}},
t_k}^{M,m,L_{t_k}},
\ee 
respectively. By restricting the summation in \eqref{interm1} to $\widehat{\Theta}^{k,
\Omega}_{\text{traj,tot}}$, $\widehat{x}^{k,\Omega}_{\text{tot}}$ and 
$\widehat{u}^{k,\Omega}_{\text{tot}}$, we get 
\be{f3s}
f_{3,t_k}(M,m)\geq \frac{L_{t_k}}{t_k} \E_{\Omega}\bigg[\sum_{j=1}^{q_k} H^{\Omega_j}
(\widehat{\Theta}^{k,\Omega_j}_{\text{traj}},
\widehat x^{k_0,\Omega_j},\widehat{u}^{k,\Omega_j})
+ H(\Theta_{\text{rest}}, x_{\text{rest}}, u_{\text{rest}})\bigg],
\ee
where the term $H(\Theta_{\text{rest}},x_{\text{rest}}, u_{\text{rest}})$ is 
negligible because, by \eqref{bor},  $(t_k-\widetilde{t}_k^{\,\Omega})/t_k$ 
vanishes as $k\to \infty$,  while all free energies per column are bounded from 
below by $(\beta-\alpha)/2$. Pick $\gep>0$ and recall \eqref{impo}. Choose 
$k_0$ such that $2 L_{n_{k_0}}/n_{k_0} \leq \gep/2$ and note that, for $k$ 
large enough, 
\be{impo1}
\widehat{s}_k^{\,\Omega}\in \Big[L_{t_k} 
\tfrac{n_{k_0}}{L_{n_{k_0}}}(1-\gep),L_{t_k} 
\tfrac{n_{k_0}}{L_{n_{k_0}}}(1+\gep)\Big].
\ee
By \eqref{defqo}, we therefore have 
\be{impo2}
q_k\in \Big[\tfrac{t_k L_{n_{k_0}}}{L_{t_k} n_{k_0}} \tfrac{1}{1+\gep}, 
\tfrac{t_k L_{n_{k_0}}}{L_{t_k} n_{k_0}} \tfrac{1}{1-\gep}\Big]=[a,b].
\ee
Recalling \eqref{f3s}, we obtain 
\be{f3s1}
f_{3,t_k}(M,m)\geq \frac{L_{t_k}}{t_k} \E_{\Omega}\bigg[\sum_{j=1}^{a} 
H^{\Omega_j}(\widehat{\Theta}^{k,\Omega_j}_{\text{traj}}, \widehat x^{k_0,\Omega_j},
\widehat{u}^{k,\Omega_j})-\sum_{j=a}^{b} \Big| H^{\Omega_j}
(\widehat{\Theta}^{k,\Omega_j}_{\text{traj}},
\widehat x^{k_0,\Omega_j},\widehat{u}^{k,\Omega_j})\Big|\bigg],
\ee
and, consequently,
\be{f3s2}
f_{3,t_k}(M,m)\geq \tfrac{L_{n_{k_0}}}{n_{k_0} (1+\gep)}\,
\E_{\,\Omega}\Big[H^{\Omega}(\widehat{\Theta}^{k,\Omega}_{\text{traj}},
\widehat x^{k_0,\Omega},\widehat{u}^{k,\Omega})\Big]-\frac{L_{t_k}}{t_k} 
(b-a) (N_{k_0}+1) m \tfrac{\beta-\alpha}{2}, 
\ee
and, by \eqref{rth},
\be{f3s2alt}
f_{3,t_k}(M,m)\geq \tfrac{l_1+\tfrac{l_2-l_1}{4}}{1+\gep} 
-(\tfrac{1}{1-\gep}-\tfrac{1}{1+\gep}) (b-a) m \tfrac{\beta-\alpha}{2}.
\ee
After taking $\gep$ small enough, we may conclude that $\liminf_{k\to\infty}
f_{3,t_k}(M,m) >l_1$, which completes the proof.


\subsection{Proof of Proposition \ref{pr:formimpp}}
\label{sMinf}

Pick $(M,m)\in \EIGH$ and note that, for every $n\in \N$, the set $\cW_{n,M}^{\,m}$ 
is contained in $\cW_{n,M}$. Thus, by using Proposition \ref{pr:formimp} we obtain 
\begin{align}
\label{limi}
\nonumber \liminf_{n\to \infty} f^{\Omega}_{1,n}(M;\alpha,\beta)
&\geq \sup_{m\geq M+2} \liminf_{n\to \infty} f^{\Omega}_{1,n}(M,m;\alpha,\beta)\\
&= \sup_{m\geq M+2} f(M,m;\alpha,\beta)\quad \text{ for } \P-a.e.\,\Omega.
\end{align}
Therefore, the proof of Proposition \ref{pr:formimpp} will be complete once we show that
\begin{align}
\label{lims}
\limsup_{n\to \infty} f^{\Omega}_{1,n}(M;\alpha,\beta)
\leq \sup_{m\geq M+2} \limsup_{n\to \infty} 
f^{\Omega}_{1,n}(M,m;\alpha,\beta)\quad \text{ for } \P-a.e.\,\Omega.
\end{align}
We will not prove \eqref{lims} in full detail, but only give the main steps in the proof. 
The proof consists in showing that, for $m$ large enough, the pieces of the trajectory 
in a column that exeed $m L_n$ steps do not contribute substantially to the free energy.

Recall (\ref{defDD2}--\ref{A11}) and use \eqref{A11} with $m=\infty$, i.e.,
\begin{align}
\label{variaf22}
Z_{n,L_n}^{\omega,\Omega}(M)
&=\sum_{N=1}^{n/L_n} \sum_{\Theta_{\text{traj}}\in \widetilde{\cD}_{L_n,N}^{M} } 
 \,\sum_{x\in \cX_{\Theta_{\text{traj}},\Omega}^{M,\infty} }
\,\sum_{u\in \,\cU_{\Theta_{\text{traj}},x,n}^{\,M,\infty,L_n}}  A_1.
\end{align} 
With each $(N,\Theta_{\text{traj}},x,u)$ in \eqref{variaf22}, we associate the trajectories
obtained by concatenating $N$ shorter trajectories $(\pi_i)_{i\in \{0,\dots,N-1\}}$ chosen 
in $(\cW_{\Theta_i,u_i,L_n})_{i\in \{0,\dots,N-1\}}$, respectively. Thus, the quantity $A_1$ 
in \eqref{variaf22} corresponds to the restriction of the partition function to the 
trajectories associated with $(N,\Theta_{\text{traj}},x,u)$. In order to discriminate between 
the columns in which more than $m L_n$ steps are taken and those in which less are taken, 
we rewrite $A_1$ as $A_2 \widetilde{A}_2$ with
\begin{align}
 A_2&=\prod_{i\in V_{u,m} }\, 
Z_{L_n}^{\omega_{I_i}}(\Theta_i,u_i),
\qquad 
\widetilde{A}_2=\prod_{i\in \widetilde V_{u,m} }\, 
Z_{L_n}^{\omega_{I_i}}(\Theta_i,u_i),
\end{align}
with $\widetilde{u}_{i}=\sum_{k=0}^{i-1} u_k$, $\Theta_i=(\Omega(i,\Pi_{i}+\cdot),\Xi_i,x_i)$ 
and $I_i=\{\widetilde{u}_{i} L_n,\dots,\widetilde{u}_{i+1} L_n-1\}$ for $i\in \{0,\dots,N-1\}$, 
with $\omega_{I}=(\omega_i)_{i\in I}$ for $I\subset \N$, where $\{0,\dots, N-1\}$ is partitioned 
into 
\be{defI}
\widetilde V_{u,m}\cup V_{u,m}\quad \text{with}\quad 
\widetilde V_{u,m}=\{i\in \{0,\dots,N-1\}\colon\; u_i>m\}.
\ee
For all $(N,\Theta_{\text{traj}},x,u)$, we rewrite $\widetilde V_{u,m}$ in the form of 
an increasing sequence $\{i_1,\dots,i_{\widetilde k}\}$ and we drop the $(u,m)$-dependence 
of $\widetilde{k}$ for simplicity. We also set $\widetilde{u}=u_{i_1}+\dots+u_{i_{
\widetilde{k}}}$, which is the total number of steps taken by a trajectory associated 
with $(N,\Theta_{\text{traj}},x,u)$ in those columns where more than $m L_n$ steps are 
taken. Finally, for $s\in \{1,\dots,\widetilde{k}\}$ we partition $I_{i_s}$ into
\begin{align}
\label{labint}
J_{i_s}\cup \widetilde{J}_{i_s} \quad \text{with} 
\quad J_{i_s}
&=\{\widetilde{u}_{i_s} L_n,\dots,(\widetilde{u}_{i_s}+M+2) L_n\}, \\
\quad \widetilde{J}_{i_s}
&=\{(\widetilde{u}_{i_s}+M+2) L_n+1,\dots,\widetilde{u}_{i_{s}+1} L_n-1\},
\end{align}
and we partition $\{1,\dots,n\}$ into 
\begin{align}
\label{twoeq}
& J\cup \widetilde{J} \quad \text{with} \quad 
\widetilde{J}=\cup_{s=1}^{\widetilde{k}} \widetilde{J}_{i_s}, 
\qquad J=\{1,\dots,n\}\setminus \widetilde{J},
\end{align}
so that $\widetilde{J}$ contains the label of the steps constituting the pieces of 
trajectory exeeding $(M+2)L_n$ steps in those columns where more than $m L_n$ steps 
are taken.  

 
\subsubsection{Step 1}
\label{step1}  
In this step we replace the pieces of trajectories in the columns indexed in 
$\widetilde V_{u,m}$  by shorter trajectories of length $(M+2) L_n$. To that aim, 
for every $(N,\Theta_{\text{traj}},x,u)$ we set 
\begin{align}
\widehat{A}_2=\prod_{i\in \widetilde V_{u,m} }\, 
Z_{L_n}^{\,\omega_{J_i}}(\Theta^{'}_i,M+2)
\end{align}
with $\Theta^{'}_i=(\Omega(i,\Pi_{i}+\cdot),\Xi_i,1)$. We will show that for all 
$\gep>0$ and for  $m$ large enough, the event 
\be{event}
B_n=\{\omega\colon\, \widetilde{A}_2\leq 
\widehat{A}_2\, e^{3\gep n} \ \text{for all}\ (N,\Theta_{\text{traj}},x,u)\}
\ee
satisfies $\P_{\omega}(B_n)\to 1$ as $n\to \infty$.

Pick, for each $s\in \{1,\dots,\widetilde{k}\}$, a trajectory $\pi_s$ in the set 
$\cW_{\Theta_{i_s},u_{i_s},L_n}$. By concatenating them we obtain a trajectory in 
$\cW_{\widetilde u L_n}$ satisfying $\pi_{\widetilde u L_n,1}=\widetilde{k}L_n$. 
Thus, the total entropy carried by those pieces of trajectories crossing the columns 
indexed in $\{i_1,\dots,i_ {\widetilde{k}}\}$ is bounded above by
\be{boundentro}
{\textstyle \prod_{s=1}^{\widetilde{k}}\, 
|\cW_{\Theta_{i_s},u_{i_s},L_n}|
\leq \big|\{\pi\in \cW_{\widetilde{u}L_n}\colon\,
\pi_{\widetilde{u}L_n,1}=\widetilde{k}L_n\}\big|.}
\ee
Since $\widetilde{u}/\widetilde{k}\geq m$, we can use Lemma \ref{convularge} in 
Appendix \ref{Path entropies} to assert that, for $m$ large enough, the right-hand side 
of \eqref{boundentro} is bounded above by $e^{\gep n}$.

Moreover, we note that an $\widetilde{u} L_n$-step trajectory satisfying $\pi_{\widetilde{u}
L_n,1}=\widetilde{k}L_n$ makes at most $\widetilde{k} L_n+ \widetilde{u}$ excursions in 
the $B$ solvent because such an excursion requires at least one horizontal step or at 
least $L_n$ vertical steps. Therefore, by using the inequalities $\widetilde{k} L_n
\leq n/m$ and $\widetilde{u}\leq n/L_n$ we obtain that, for $n$ large enough, the sum 
of the Hamiltonians associated with $(\pi_1,\dots,\pi_{\widetilde{k}})$ is bounded from 
above, uniformly in $(N,\Theta_{\text{traj}},x,u)$ and $(\pi_1,\dots,\pi_{\widetilde{k}})$, 
by
\be{upb}
{\textstyle \sum_{s=1}^{\widetilde{k}} 
H_{u_{i_s} L_n,L_n}^{\omega_{I_{i_s}},\Omega(i_s,\Pi_{i_s}+\cdot)}
(\pi_s)\leq \max\{\sum_{i\in I} \xi_i\colon\, I\in \cup_{r=1}^{2n/m}\cE_{n,r}\}},
\ee
with $\cE_{n,r}$ defined in \eqref{add26} in Appendix \ref{Computation} and $\xi_i=\beta 
1_{\{\omega_i=A\}}-\alpha 1_{\{\omega_i=B\}}$ for $i\in \N$. At this stage we use the 
definition in \eqref{add28} and note that, for all $\omega \in \cQ_{n,m}^{\gep/\beta,
(\alpha-\beta)/2+\gep}$, the right-hand side in \eqref{upb} is smaller than $\gep n$.
Consequently, for $m$ and $n$ large enough we have that, for all $\omega \in 
\cQ_{n,m}^{\gep/\beta,(\alpha-\beta)/2+\gep}$, 
\be{indif}
\widetilde{A}_2\leq e^{2\gep n}\quad \text{for all}\quad (N,\Theta_{\text{traj}},x,u).
\ee

Recalling \eqref{boundel} and noting that $\widetilde{k} L_n\leq n/m$, we can write
\be{boundbe}
\widehat{A}_2\geq e^{-\widetilde{k} (M+2) L_n C_{\text{uf}}(\alpha)}
\geq e^{-n \tfrac{M+2}{m}  C_{\text{uf}}(\alpha)},
\ee
and therefore, for $m$ large enough, for all $n$ and all $(N,\Theta_{\text{traj}},x,u)$ 
we have $ \widehat{A}_2\geq e^{-\gep n}$.

Finally, use \eqref{indif} and \eqref{boundbe} to conclude that, for $m$ and $n$ large 
enough, $\cQ_{n,m}^{\gep/\beta,(\alpha-\beta)/2+\gep}$ is a subset of $B_n$. Thus, Lemma
\ref{lele} ensures that, for $m$ large enough, $\lim_{n\to\infty} P_\omega(B_n)=1$.

\subsubsection{Step 2}
\label{step2}

Let $(\widetilde{w}_i)_{i\in\N}$ be an i.i.d.\ sequence of Bernouilli trials, independent 
of $\omega,\Omega$. For $(N,\Theta_{\text{traj}},x,u)$ we set $\widehat{u}=\widetilde{u}
-\widetilde{k} (M+2)$. In Step 1 we have removed $\widehat{u}L_n$ steps from the trajectories
associated with $(N,\Theta_{\text{traj}},x,u)$ so that they have become trajectories 
associated with $(N,\Theta_{\text{traj}},x^{'},u)$. In this step, we will concatenate 
the trajectories associated with $(N,\Theta_{\text{traj}},x^{'},u)$ with an 
$\widehat{u} L_n$-step trajectory to recover a trajectory that belongs to 
$\cW_{n,M}^{\,m}$.
 
For $\Omega\in \{A,B\}^{\N_0\times \Z}$, $t,N\in \N$ and $k\in \Z$, let
\be{mesemp}
P_{A}^\Omega(N,k)(t)=\frac{1}{t} \sum_{j=0}^{t-1} 1_{\{\Omega(N+j,k)=A\}}
\ee
be the proportion of $A$-blocks on the $k^{\text{th}}$ line and between the $N^{\text{th}}$ 
and the $(N+t-1)^{\text{th}}$ column of $\Omega$. Pick $\eta>0$ and $j\in \N$, and set 
\be{defsn}
S_{\eta,j}=\bigcup_{N=0}^{j}\bigcup_{k=-m_1 N}^{m_1 N} 
\bigcup_{t\geq \eta j}\Big\{ P_{A}^\Omega(N,k)(t)\leq \frac{p}{2}\Big\}.
\ee
By a straightforward application of Cramer's Theorem for i.i.d.\ random variables, we have
that $\sum_{j\in\N} P_\Omega(S_{\eta,j})<\infty$. Therefore, using the Borel-Cantelli Lemma, 
it follows that for $\P_\Omega$-a.e.\ $\Omega$, there exists a $j_\eta(\Omega)\in \N$ such 
that $\Omega\notin S_{\eta,j}$ as soon as $j\geq j_\eta(\Omega)$. In what follows, we consider 
$\eta=\gep/\alpha m$ and we take $n$ large enough so that  $n/L_n\geq j_{\gep/\alpha m}
(\Omega)$, and therefore $\Omega\notin S_{\frac{n}{L_n},\frac{\gep}{\alpha m}}$. 

Pick $(N,\Theta,x,u)$ and consider one trajectory $\widehat \pi$, of length $\widehat{u} L_n$, 
starting from $(N,\Pi_N+b_N)L_n$, staying in the coarsed-grained line at height $\Pi_N$, 
crossing the $B$-blocks in a straight line and the $A$-blocks in $m L_n$ steps. The number 
of columns crossed by $\widehat{\pi}$ is denoted by $\widehat{N}$ and satisfies $\widehat{N}
\geq \widehat{u}/m$. If $\widehat{u} L_n \leq \gep n/\alpha$, then the Hamiltonian associated 
with $\widehat{\pi}$ is clearly larger than $-\gep n$. If $\widehat{u} L_n \geq \gep n/\alpha$ 
in turn, then
\be{hampi}
H_{\widehat{u} L_n,L_n}^{\,\widetilde{w},
\Omega(N+\cdot,\Pi_N)}(\widehat{\pi})
\geq  -\alpha L_n  \widehat{N} \big[1-P_{A}^\Omega(N,\Pi_N)(\widehat{N})\big].
\ee
Since $N\leq n/L_n$, $|\Pi_N|\leq m_1 N$ and $\widehat{N}\geq \gep n/(\alpha m L_n)$, we can 
use the fact that $\Omega\notin S_{\frac{n}{L_n},\frac{\gep}{\alpha m}}$ to obtain 
\be{boundP}
P_{A}^\Omega(N,\Pi_N)(\widehat{N})\geq \frac{p}{2}.
\ee
At this point it remains to bound $\widehat{N}$ from above, which is done by noting that
\be{boundN}
\widehat{N} \big[m P_{A}^\Omega(N,\Pi_N)(\widehat{N})
+1-P_{A}^\Omega(N,\Pi_N)(\widehat{N})\big]
=\widehat{u}\leq \tfrac{n}{L_n}.
\ee
Hence, using \eqref{boundP} and \eqref{boundN}, we obtain $\widehat{N}\leq 2 n/p m L_n$
and therefore the right-hand side of \eqref{hampi} is bounded from below by $-\alpha (2-p)
n/p m$, which for $m$ large enough is larger than $-\gep n$.

Thus, for $n$ and $m$ large enough and for all $(N,\Theta,x,u)$, we have a trajectory 
$\widehat{\pi}$ at which the Hamiltonian is bounded from below by $-\gep n$ that can 
be concatenated with all trajectories associated with $(N,\Theta,x{'},u)$ to obtain a 
trajectory in $\cW_{n,M}^{\,m}$. Consequently, recalling \eqref{labint}, for $n$ and 
$m$ large enough we have 
\be{boundA2}
A_2\widehat{A_2}\leq e^{\gep n} 
Z_{\,n,L_n}^{(\omega_{J}\,,\,\widetilde \omega), \Omega} (M,m)
\qquad \forall\, (N,\Theta,x,u).
\ee

\subsubsection{ Step 3}
\label{step3}

In this step, we average over the microscopic disorders $\omega,\widetilde{\omega}$. 
Use \eqref{boundA2} to note that, for $n$ and $m$ large enough and all $\omega\in B_n$, 
we have
\begin{align}
\label{variaf222}
Z_{n,L_n}^{\omega,\Omega}(M)
&\leq e^{4\gep n} \sum_{N=1}^{n/L_n} 
\sum_{\Theta_{\text{traj}}\in \widetilde{\cD}_{L_n,N}^{M} } 
 \,\sum_{x\in \cX_{\Theta_{\text{traj}},\Omega}^{M,\infty} }
\,\sum_{u\in \,\cU_{\Theta_{\text{traj}},x,n}^{\,M,\infty,L_n}} 
Z_{\,n,L_n}^{(\omega_{J}\,,\,\widetilde{\omega}), \Omega} (M,m).
\end{align}  
We use \eqref{concmesut} to claim that there exists $C_1,C_2>0$ so that for all $n\in \N$, all 
$m\in \N$ and all $J$, 
\be{inegco}
\P_{\omega,\widetilde{\omega}}\Big(\Big|\tfrac1n
\log Z_{n,L_n}^{(\omega_{J}\,,\,\widetilde{\omega}),\Omega}(M,m)
-f_{1,n}^{\Omega}(M,m)\Big|\geq \gep \Big)\leq C_1 e^{-C_2 \gep^2 n}.
\ee 
We set also
\be{inegco2}
D_{n}=\bigcap_{(N,\Theta_{\text{traj}},x,u)} 
\Big\{\Big|\tfrac1n\log Z_{n,L_n}^{(\omega_{J}\,,
\,\widetilde{\omega}),\Omega}(M,m)-f_{1,n}^{\Omega}(M,m)\Big|\leq \gep\Big\},
\ee
recall the definition of $c_n$ in \eqref{carsum} (used with $(M,\infty)$), and use 
\eqref{inegco} and the fact that $c_n$ grows subexponentially, to obtain $\lim_{n\to\infty} 
\P_{\omega,\widetilde{\omega}}(D_n^c)= 0$. For all $(\omega,\widetilde{\omega})$ satisfying 
$\omega\in B_n$ and $(\omega,\widetilde{\omega})\in D_n$, we can rewrite \eqref{variaf222} as 
\begin{align}
\label{variafin}
Z_{n,L_n}^{\omega,\Omega}(M)
&\leq c_n\ e^{n  f_{1,n}^{\Omega}(M,m) +5\gep n}.
\end{align}  
As a consequence, recalling \eqref{boundel}, for $m$ large enough we have 
\be{endres}
f^{\Omega}_{n}(M;\alpha,\beta)\leq \P(B_n^c\cup D_n^c) 
\, C_{\text{uf}}(\alpha)+ \frac{\log c_n}{n}+\frac1n
\E\Big(1_{\{B_n\cup D_n\}}  \big(n f_{1,n}^{\Omega}(M,m) +5\gep n\big)\Big).
\ee
Since $\P(B_n^c\cup D_n^c)$ and $(\log c_n)/n$ vanish when $n\to \infty$, it suffices to 
apply Proposition \ref{pr:formimp} and to let $\gep\to 0$ to obtain \eqref{lims}. This 
completes the proof of Proposition \ref{pr:formimpp}.


\subsection{Proof of Proposition \ref{pr:varar}}
\label{svarar}

Note that, for all $m\geq M+2$, we have $\cR_{p,M}^{m}\subset  \cR_{p,M}$. Moreover, any 
$(u_\Theta)_{\Theta\in \overline\cV_M^{\,m}}\in \cB_{\overline \cV_M^{\,m}}$  can be 
extended to $\overline\cV_M$ so that it belongs to $\cB_{\overline \cV_M}$. Thus, 
\be{}
\sup_{m\geq M+2} f(M,m;\alpha,\beta)\leq \sup_{\rho\in \cR_{p,M}} 
\sup_{(u)\in \cB_{\overline\cV_M} }  V(\rho,u).
\ee
As a consequence, it suffices to show that for all $\rho\in \cR_{p,M}$ and $(u_\Theta)_{
\Theta\in \overline\cV_M}\in \cB_{\overline \cV_M}$,
\be{finalst}
V(\rho,u)\leq\sup_{m\geq M+2} \sup_{\rho\in \cR_{p,M}^{m}} 
\sup_{(u)\in \cB_{\overline\cV_M^{\,m}} } V(\rho,u).
\ee
If $\int_{\overline \cV_M} u_\Theta\,\rho(d\Theta)=\infty$, then \eqref{finalst} is 
trivially satisfied since $V(\rho,u)=-\infty$. Thus, we can assume that $\rho(\overline
\cV_M\setminus D_M)=1$, where $D_M=\{\Theta\in\overline \cV_M\colon\,\chi_\Theta
\in\{A^{\Z},B^{\Z}\}, x_\Theta=2\}$. Since $\int_{\overline \cV_M} u_\Theta\,\rho(d\Theta)
<\infty$ and since (recall \eqref{boundel}) $\psi(\Theta,u)$ is uniformly bounded by 
$C_{\text{uf}}(\alpha)$ on $(\Theta,u)\in \overline \cV_M^{\,*}$, we have by dominated 
convergence that for all $\gep>0$ there exists an $m_0\geq M+2$ such that, for all 
$m\geq m_0$, 
\be{mamj}
V(\rho,u)\leq \frac{\int_{\overline\cV_M^{\,m}} 
u_\Theta \psi(\Theta,u_\Theta) \rho(d\Theta)}{\int_{\overline\cV_M^{\,m}} 
u_\Theta  \rho(d\Theta)}+\tfrac{\gep}{2}.
\ee
Since $\rho(\overline \cV_M\setminus D_M)=1$ and since $\cup_{m\geq M+2} \overline 
\cV_M^{\,m}= \overline \cV_M\setminus D_M$, we have $\lim_{m\to\infty}\rho(\overline
\cV_M^{\,m})=1$. Moreover, for all $m\geq m_0$ there exists a $\widehat{\rho}_{m}\in
\cR_{p,M}^{m}$ such that $\widehat\rho_{m}=\rho_{m}+\overline{\rho}_{m}$, with $\rho_{m}$ 
the restriction of $\rho$ to $\overline \cV_M^{\,m}$ and $\overline \rho_m$ charging only 
those $\Theta$ satisfying $x_\Theta=1$. Since all $\Theta \in \overline\cV_M$ with 
$x_\Theta=1$ also belong to $\overline \cV_M^{\,M+2}$, we can state that $\overline
\rho_m$ only charges $\overline \cV_M^{\,M+2}$ . Therefore
\be{malmj}
V(\widehat\rho_m,u) 
=\frac{\int_{\overline\cV_M^{\,m}} u_\Theta \psi(\Theta,u_\Theta) \rho(d\Theta)
+\int_{\overline \cV_M^{\,M+2}} u_\Theta \psi(\Theta,u_\Theta) 
\overline \rho_m(d\Theta)}{\int_{\overline\cV_M^{\,m}} 
u_\Theta  \rho(d\Theta)+\int_{\overline \cV_M^{\,M+2}} u_\Theta \overline\rho_m(d\Theta)}.
\ee
Since $\Theta\mapsto u_\Theta$ is continuous on $\overline\cV_M$, there exists an $R>0$ 
such that $u_\Theta\leq R$ for all $\Theta\in \overline\cV_M^{M+2}$. Therefore we can 
use \eqref{mamj} and \eqref{malmj} to obtain, for $m\geq m_0$,
\be{malmj2}
V(\widehat\rho_m,u)\geq (V(\rho,u)-\tfrac{\gep}{2}) 
\frac{\int_{\overline\cV_M^{\,m}} u_\Theta \rho(d\Theta)}{\int_{\overline\cV_M^{\,m}} 
u_\Theta  \rho(d\Theta)+\int_{\overline  \cV_M^{\,M+2}} 
u_\Theta \overline\rho_m(d\Theta)}
- R\, C_{\text{uf}}(\alpha)\, (1-\rho(\overline\cV_M^{\,m})).
\ee
The fact that $\overline \rho_m(\cV_M^{M+2})=\rho(\overline\cV_M\setminus\overline\cV_M^{\,m})$ 
for all $m\geq m_0$ impliess that $\lim_{m\to\infty}\overline \rho_m(\cV_M^{M+2})=0$.
Consequently, the right-hand side in \eqref{malmj2} tends to $V(\rho,u)-\gep/2$ as $m\to\infty$. 
Thus, there exists a $m_1\geq m_0$  such that $V(\widehat{\rho}_{m_1},u)\geq V(\rho,u)-\gep$. 
Finally, we note that there exists a $m_2\geq m_1+1$ such that $u_\Theta\leq m_2$ for all 
$\Theta\in \overline\cV_M^{\,m_1}$, which allows us to extend $(u_\Theta)_{\Theta\in \overline
\cV_M^{\,m_1}}$ to $\overline \cV_M^{\,m_2}$ such that $(u_\Theta)_{\Theta\in \overline 
\cV_M^{\,m_2}}\in \cB_{\overline \cV_M^{\,m_2}}$. It suffices to note that $\widehat{\rho}_{m_1}
\in \cR_{p,M}^{m_1}\subset  \cR_{p,M}^{m_2}$ to conclude that 
\be{finaalt}
V(\rho,u)\leq f(M,m_2;\,\alpha,\beta)+\gep.
\ee


\begin{appendix}

\section{Properties of path entropies}
\label{Path entropies}

In Appendix~\ref{A.1} we state a basic lemma (Lemma~\ref{conunif1}) about uniform 
convergence of path entropies in a single column. This lemma is proved with the
help of three additional lemmas (Lemmas~\ref{convularge}--\ref{convularge3}),
which are proved in Appendix~\ref{A.2}. The latter ends with an elementary lemma 
(Lemma~\ref{l:lemconv2}) that allows us to extend path entropies from rational to 
irrational parameter values. In Appendix~\ref{A.3}, we extend Lemma~\ref{conunif1} 
to entropies associated with sets of paths fullfilling certain restrictions on 
their vertical displacement. 


\subsection{Basic lemma}
\label{A.1}

We recall the definition of $\widetilde{\kappa}_L$, $L\in \N$, in \eqref{ttrajblock} 
and $\widetilde{\kappa}$ in \eqref{conventr}.

\begin{lemma}
\label{conunif1}
For every $\gep>0$ there exists an $L_\gep\in \N$ 
such that 
\begin{align}
\label{unif}
|\tilde{\kappa}_L(u,l)-\tilde{\kappa}(u,l)|
&\leq \gep \text{ for } L\geq L_\gep \text{ and } (u,l)\in \cH_L.
\end{align}
\end{lemma}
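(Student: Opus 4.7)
The plan is to split $\cH_L$ into a compact region $\{u \leq V_\gep\}$ and a tail $\{u > V_\gep\}$: on the tail both $\tilde\kappa_L$ and $\tilde\kappa$ will be small, so their difference is controlled automatically, while on the compact region I will combine pointwise convergence at a finite rational grid with a uniform regularity estimate for $\tilde\kappa_L$ in $(u,l)$.

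First, given $\gep>0$, I would establish an auxiliary decay estimate (the role of Lemmas~\ref{convularge}--\ref{convularge3}): there exists $V_\gep \in [1,\infty)$ such that $\tilde\kappa_L(u,l) \leq \gep/3$ for every $L\in \N$ and every $(u,l) \in \cH_L$ with $u \geq V_\gep$. This should follow from a counting argument, since the paths in $\cW_L(u,l)$ have only $L$ horizontal steps, so for $u$ large their $uL$ steps are dominated by vertical excursions; self-avoidance forces each column's excursion to be of up-then-down (or reverse) type, yielding $|\cW_L(u,l)| \leq (CuL)^{cL}$ and hence $\tilde\kappa_L(u,l) \leq (cL/uL)\log(CuL) \to 0$ uniformly in $L$ as $u \to \infty$. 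Since $\tilde\kappa(u,l) = \sup_L \tilde\kappa_L(u,l)$ by Proposition~\ref{lementr}, the same bound transfers to the limit, giving $|\tilde\kappa_L - \tilde\kappa| \leq \gep$ on the tail.

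On the compact set $K_\gep := \{(u,l) \in \cH : u \leq V_\gep\}$, I would invoke the explicit formula \eqref{kapexplform} for $\tilde\kappa$ in terms of $\kappa(a,b)$ and $\hat\kappa(\mu)$ from \cite{dHW06} to assert uniform continuity of $\tilde\kappa$, giving $\delta = \delta(\gep) > 0$ with $|\tilde\kappa(p) - \tilde\kappa(p')| \leq \gep/4$ whenever $p,p' \in K_\gep$ and $\|p - p'\| \leq \delta$. I then fix a finite $\delta/2$-net $G \subset K_\gep \cap \mathbb{Q}^2$, chosen with coordinates of a common denominator $L_0$ so that $G \subset \cH_L$ for every $L \in L_0\N$. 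Proposition~\ref{lementr} yields pointwise convergence $\tilde\kappa_L(p) \to \tilde\kappa(p)$ along such $L$ for each $p \in G$, hence an $L_1$ beyond which $|\tilde\kappa_L(p) - \tilde\kappa(p)| \leq \gep/4$ uniformly over the finite set $G$.

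The main obstacle is an approximate Lipschitz estimate on $\tilde\kappa_L$: for nearby $(u,l), (u^*,l^*) \in \cH_L$, one needs $|\tilde\kappa_L(u,l) - \tilde\kappa_L(u^*,l^*)| \leq \gep/4$, uniformly in $L$. I plan a concatenation argument: given any $\pi \in \cW_L(u,l)$, prepend a short correction sub-path of length $O((|u-u^*| + |l-l^*|)L)$ that adjusts both the total length and the terminal height, producing an element of $\cW_L(u^*,l^*)$; counting the corrections by $3$ to a power linear in $\delta L$ gives $|uL\,\tilde\kappa_L(u,l) - u^*L\,\tilde\kappa_L(u^*,l^*)| \leq C\delta L$, and shrinking $\delta$ then provides the required bound. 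The subtle point is that the concatenated path must remain directed and self-avoiding, which I plan to handle by placing the correction as a purely horizontal leg followed by a monotone vertical leg at one end of $\pi$, where self-avoidance is automatic. Combining this with the tail bound, the uniform continuity of $\tilde\kappa$ on $K_\gep$, and pointwise convergence on $G$, the triangle inequality delivers $|\tilde\kappa_L(u,l) - \tilde\kappa(u,l)| \leq \gep$ for every $(u,l) \in \cH_L$ and every $L$ large enough, completing the proof of \eqref{unif}.
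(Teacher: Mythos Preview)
Your split into a tail $\{u>V_\gep\}$ and a compact part $\{u\leq V_\gep\}$ is sound and matches the paper's use of Lemma~\ref{convularge}; the counting bound $|\cW_L(u,l)|\leq (CuL)^{cL}$ is essentially the paper's argument there.

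The genuine gap is in your Lipschitz step on the compact region. You propose to compare $\tilde\kappa_L(u,l)$ with $\tilde\kappa_L(u^*,l^*)$ at the \emph{same} $L$ by ``placing the correction as a purely horizontal leg followed by a monotone vertical leg at one end of $\pi$''. But every path in $\cW_L(u,l)$ already has exactly $L$ horizontal steps, and so does every path in $\cW_L(u^*,l^*)$; any horizontal leg in the correction would change the column count and take you out of $\cW_L$ altogether. A purely vertical correction, on the other hand, cannot independently tune both the total length $uL$ and the terminal height $lL$ while remaining self-avoiding in the terminal column (which the original path may already partially occupy). So the injection you describe does not exist as stated, and without it the triangle-inequality closure of your argument breaks down.

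The paper avoids this by never attempting a two-variable Lipschitz bound for $\tilde\kappa_L$ at fixed $L$. It instead separates the compact region into an interior strip $\{1+|l|+\eta\leq u\leq M\}$ and a boundary strip $\{u<1+|l|+\eta\}$. On the interior (Lemma~\ref{convularge2}) it uses concavity of $(u,l)\mapsto u\tilde\kappa(u,l)$ to get Lipschitz continuity of the \emph{limit}, fixes a coarse lattice at scale $L'=q_\gep L_\gep$, and then for general large $L$ writes $L=sL'+r$ and concatenates $s$ copies of a path in $\cW_{L'}(u',l')$ with a single short correction spanning the leftover $r$ columns; here the correction \emph{is} allowed horizontal steps precisely because the two scales differ. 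On the boundary strip (Lemma~\ref{convularge3}) only continuity in $u$ at fixed $l$ is needed, and this is obtained by an explicit injection that lengthens two opposite vertical stretches by $\eta L/2$ each, together with a controlled-multiplicity inverse. Replacing your single-scale Lipschitz estimate by these two ingredients would repair the argument.
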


\begin{proof} 
With the help of Lemma~\ref{convularge} below we get rid of those $(u,l) \in 
\cH\cap\mathbb{Q}^2$ with $u$ large, i.e., we prove that $\lim_{u\to\infty}
\kappa_L(u,l)=0$ uniformly in $L\in \N$ and $(u,l)\in \cH_L$. Lemma~\ref{convularge2} 
in turn deals with the moderate values of $u$, i.e., $u$ bounded away from infinity and 
$1+|l|$. Finally, with~Lemma \ref{convularge3} we take into account the small values 
of $u$, i.e., $u$ close to $1+|l|$. To ease the notation we set, for $\eta\geq 0$ and 
$M>1$, 
\be{defHM}
\cH_{L,\eta,M}=\{(u,l)\in \cH_L\colon  1+|l|+\eta\leq u\leq M\},
\qquad 
\cH_{\eta,M}=\{(u,l)\in \cH\colon  1+|l|+\eta \leq u\leq M\}.
\ee 

\bl{convularge}
For every $\gep>0$ there exists an $M_\gep>1$ such that 
\be{convularge1}
\tfrac{1}{uL}\log \big|\{\pi\in \cW_{uL}\colon \pi_{uL,1}=L\}\big|
\leq \gep \qquad \forall\,L\in \N,\,u\in 1+\tfrac{\N}{L}\colon u\geq M_\gep.
\ee
\el

\bl{convularge2}
For every $\gep>0$, $\eta>0$ and $M>1$ there exists an $L_{\gep,\eta,M}\in \N$ 
such that
\begin{align}
\label{unif1}
|\tilde{\kappa}_L(u,l)-\tilde{\kappa}(u,l)|
&\leq \gep \qquad \forall\,L\geq L_{\gep,\eta,M},\,(u,l)\in \cH_{L,\eta,M}.
\end{align}
\el

\bl{convularge3} 
For every $\gep>0$ there exist $\eta_\gep \in (0,\tfrac12)$ and $L_\gep\in \N$ 
such that
\be{convunf4alt}
|\tilde{\kappa}_L(u,l)-\tilde{\kappa}_{L}(u+\eta,l)|
\leq \gep \qquad \forall\,L\geq L_\gep,\,(u,l)\in \cH_{L},\, 
\eta\in(0,\eta_\gep)\cap \tfrac{2\N}{L}.
\ee
\el

\noindent
Note that, after letting $L\to\infty$ in Lemma \ref{convularge3}, we get 
\be{complco}
|\tilde{\kappa}(u,l)-\tilde{\kappa}(u+\eta,l)|
\leq \gep \qquad \forall\,(u,l)\in \cH\cap\mathbb{Q}^2,\, 
\eta\in(0,\eta_\gep)\cap \mathbb{Q}.
\ee

Pick $\gep>0$ and $\eta_\gep\in (0,\tfrac12)$ as in Lemma~\ref{convularge3}. Note 
that Lemmas~\ref{convularge}--\ref{convularge2} yield that, for $L$ large enough, 
\eqref{unif} holds on $\{(u,l)\in \cH_L\colon u\geq 1+|l|+\frac{\eta_\gep}{2}\}$. 
Next, pick $L\in \N$, $(u,l)\in\cH_{L}\colon  u\leq 1+|l|+\frac{\eta_\gep}{2}$ 
and $\eta_L\in (\frac{\eta_\gep}{2},\eta_\gep)\cap\frac{2\N}{L}$, and write
\be{bounfpr}
|\tilde{\kappa}_L(u,l)-\tilde{\kappa}(u,l)|\leq A+B+C,
\ee
where
\be{abc}
A=|\tilde{\kappa}_L(u,l)-\tilde{\kappa}_L(u+\eta_L,l)|, 
\quad 
B=|\tilde{\kappa}_L(u+\eta_L,l)-\tilde{\kappa}(u+\eta_L,l)|,
\quad 
C=|\tilde{\kappa}(u+\eta_L,l)-\tilde{\kappa}(u,l)|.
\ee
By \eqref{complco}, it follows that $C\leq \gep$. As mentioned above, the fact 
that $(u+\eta_L,l)\in \cH_L$ and $u+\eta_L\geq |l|+\frac{\eta_\gep}{2}$ implies 
that, for $L$ large enough, $B\leq \gep$ uniformly in $(u,l)\in\cH_{L}\colon\,
u\leq 1+|l|+\frac{\eta_\gep}{2}$. Finally, from Lemma~\ref{convularge3} we obtain 
that $A\leq \gep$ for $L$ large enough, uniformly in $(u,l)\in\cH_{L}\colon\,
u\leq 1+|l|+\frac{\eta_\gep}{2}$. This completes the proof of Lemma~\ref{conunif1}.
\end{proof}


\subsection{Proofs of Lemmas \ref{convularge}--\ref{convularge3}}
\label{A.2} 


\subsubsection{Proof of Lemma~\ref{convularge}} 
\label{A.2.1}

The proof relies on the following expression:
\be{compex}
v_{u,L} = \big|\{\pi\in \cW_{uL}\colon \pi_{uL,1}=L\}\big|
=\sum_{r=1}^{L+1} \binom{L+1}{r} \binom{(u-1) L}{r} 2^r,
\ee
where $r$ stands for the number of vertical stretches made by the trajectory (a 
vertical stretch being a maximal sequence of consecutive vertical steps). Stirling's 
formula allows us to assert that there exists a $g\colon\,[1,\infty)\to (0,\infty)$ 
satisfying $\lim_{u\to \infty}g(u)=0$ such that
\be{stir}
\binom{uL}{L}\leq e^{g(u) uL}, \qquad u\geq 1,\,L\in \N.
\ee
Equations (\ref{compex}--\ref{stir}) complete the proof. 


\subsubsection{Proof of Lemma~\ref{convularge2}}
\label{A.2.2}

We first note that, since $u$ is bounded from above, it is equivalent to prove 
\eqref{unif1} with $\tilde{\kappa}_L$ and $\tilde{\kappa}$, or with $G_L$ and 
$G$ given by
\be{defG}
G(u,l) = u\tilde{\kappa}(u,l),\qquad
G_L(u,l) = u\tilde{\kappa}_L(u,l),\quad (u,l)\in \cH_L.
\ee
Via concatenation of trajectories, it is straightforward to prove that $G$ is 
$\mathbb{Q}$-concave on $\cH\cap \mathbb{Q}^2$, i.e., 
\be{Gconcav}
G(\lambda(u_1,l_1)+(1-\lambda)(u_2,l_2))
\geq \lambda G(u_1,l_1)+(1-\lambda) G(u_2,l_2),
\ \  \lambda\in \mathbb{Q}_{[0,1]},\,(u_1,l_1),(u_2,l_2)\in \cH\cap \mathbb{Q}^2.
\ee
Therefore $G$ is Lipschitz on every $K\cap\cH\cap \mathbb{Q}^2$ with $K \subset
\cH^0$ (the interior of $\cH$) compact. Thus, $G$ can be extended on $\cH^0$ to 
a function that is Lipschitz on every compact subset in $\cH^0$.

Pick $\eta>0$, $M>1$, $\gep>0$, and choose $L_\gep\in \N$ such that $1/L_\gep\leq 
\gep$. Since $\cH_{\eta,M}\subset\cH^0$ is compact, there exists a $c>0$ (depending 
on $\eta,M$) such that $G$ is $c$-Lipschitz on $\cH_{\eta,M}$. Moreover, any point 
in $\cH_{\eta,M}$ is at distance at most $\gep$ from the finite lattice $\cH_{L_\gep,
\eta,M}$. Lemma~\ref{lementr} therefore implies that there exists a $q_\gep\in \N$ 
satisfying
\be{convunf4}
|G_{qL_\gep}(u,l)-G(u,l)|
\leq \gep \qquad \forall\,(u,l)\in \cH_{L_\gep,\eta,M},\, q\geq q_\gep.
\ee
Let $L'=q_\gep L_\gep$, and pick $q\in \N$ to be specified later. Then, for $L\geq 
q L'$ and $(u,l)\in \cH_{L,\eta,M}$, there exists an $(u',l')\in\cH_{L_{\gep},\eta,M}$ 
such that $|(u,l)-(u',l')|_\infty \leq \gep$, $u>u'$, $|l|\geq |l'|$ and $u-u'\geq 
|l|-|l'|$. We recall \eqref{conventr} and write
\be{maj1}
0\leq G(u,l)-G_L(u,l)\leq A+B+C,
\ee
with
\begin{align}
A= |G(u,l)-G(u',l')|,\quad 
B= |G(u',l')-G_{L'}(u',l')|,\quad 
C= G_{L'}(u',l')-G_L(u,l).
\end{align}
Since $G$ is $c$-Lipschitz on $\cH_{\eta,M}$, and since $|(u,l)-(u',l')|_\infty\leq\gep$, 
we have $A\leq c\gep$. By \eqref{convunf4} we have that $B\leq \gep$. Therefore only $C$ 
remains to be considered. By Euclidean division, we get that $L=sL'+r$, where $s\geq q$ 
and $r\in \{0,\dots,L'-1\}$. Pick $\pi_1,\pi_2,\dots,\pi_s\in \cW_{L'}(u',|l'|)$, 
and concatenate them to obtain a trajectory in $\cW_{sL'}(u',|l'|)$. Moreover, 
note that 
\begin{align}
\label{impin}
uL-u'sL'&=(u-u')sL'+ur\\
\nonumber 
&\geq (|l|-|l'|)sL'+(1+|l|)r= (L-sL')+ (|l|L-s|l'|L'),
\end{align}
where we use that $L-sL'=r$, $u-u'\geq |l|-|l'|$ and $u\geq 1+|l|$. Thus, \eqref{impin} 
implies that any trajectory in $\cW_{L'}(u',|l'|)$ can be concatenated with 
an ($uL-u'sL'$)-step trajectory, starting at $(sL',s|l'|L')$ and ending at $(L,|l|L)$, 
to obtain a trajectory in  $\cW_{L}(u,|l|)$. Consequently,
\be{concat}
G_{L}(u,l)\geq \tfrac{s}{L}\log \kappa_{L'}(u',l')\geq \tfrac{s}{s+1} G_{L'}(u',l').
\ee
But $s\geq q$ and therefore $G_{L'}(u',l')-G_{L}(u,l)\leq \tfrac1q G_{L'}(u',l')\leq
\tfrac1q M \log 3$ (recall that $\log 3$ is an upper bound for all entropies per step). 
Thus, by taking $q$ large enough, we complete the proof.

\subsubsection{Proof of Lemma~\ref{convularge3}}
\label{A.2.3}

Pick $L\in \N$, $(u,l)\in \cH_L$, $\eta\in\frac{2\N}{L}$, and define the map $T\colon
\cW_{L}(u,l)\mapsto \cW_L(u+\eta,l)$ as follows. Pick $\pi\in  \cW_{L}(u,l)$, find 
its first vertical stretch, and extend this stretch by $\tfrac{\eta L}{2}$ steps. 
Then, find the first vertical stretch in the opposite direction of the stretch just
extended, and extend this stretch by $\tfrac{\eta L}{2}$ steps. The result of this 
map is $T(\pi)\in\cW_L(u+\eta,l)$, and it is easy to verify that $T$ is an injection, 
so that $|\cW_{L}(u,l)|\leq | \cW_{L}(u+\eta,l)|$. 

Next, define a map $\widetilde{T}\colon\,\cW_{L}(u+\eta,l)\mapsto \cW_L(u,l)$ as 
follows. Pick $\pi\in\cW_{L}(u+\eta,l)$ and remove its first $\tfrac{\eta L}{2}$ 
steps north and its first $\tfrac{\eta L}{2}$ steps south. The result is 
$\widetilde{T}(\pi)\in \cW_L(u,l)$, but $\widetilde{T}$ is not injective. However, 
we can easily prove that for every $\gep>0$ there exist $\eta_\gep>0$ and $L_\gep
\in \N$ such that, for all $\eta<\eta_\gep$ and all $L\geq l_\gep$, the number of 
trajectories in  $\cW_{L}(u+\eta,l)$ that are mapped by $\widetilde{T}$ to a 
particular trajectory in $\pi\in\cW_{L}(u,l)$ is bounded from above by $e^{\gep L}$, 
uniformly in $(u,l)\in\cH_L$ and $\pi\in\cW_{L},(u,l)$.

\medskip
This completes the proof of Lemmas~\ref{convularge}--\ref{convularge3}.

\subsubsection{Observation}
\label{A.2.4}

We close this appendix with the following observation. Recall Lemma~\ref{lementr}, 
where $(u,l)\mapsto \tilde{\kappa}(u,l)$ is defined on $\cH\cap\mathbb{Q}^2$. 

\bl{l:lemconv2}
(i) $(u,l)\mapsto u\tilde{\kappa}(u,l)$ extends to a continuous and strictly concave 
function on $\cH$.\\
(ii) $l\mapsto \tilde{\kappa}(u,l)$ is increasing on $[-u+1,0]$ and decreasing on 
$[0,u-1]$,\\  
(iii) $\lim_{u\to \infty} \tilde{\kappa}(u,0)=0$.\\
(iv) $u\mapsto u\tilde{\kappa}(u,l)$ is strictly increasing on $[1+|l|,\infty)$ and  $\lim_{u\to \infty} u \tilde{\kappa}(u,l)=\infty$. 
\el

\begin{proof}
(i) In the proof of Lemma~\ref{conunif1} we have shown that $\tilde{\kappa}$ can be 
extended to $\cH^0$ in such a way that $(u,l)\mapsto u\tilde{\kappa}(u,l)$ is 
continuous and concave on $\cH^0$. Lemma~\ref{convularge3} allows us to extend 
$\tilde{\kappa}$ to the boundary of $\cH$, in such a way that continuity and 
concavity of $(u,l)\mapsto u\tilde{\kappa}(u,l)$ hold on all of $\cH$. To obtain 
the strict concavity, we recall the formula in \eqref{kapexplform}, i.e., 
\be{kapexplform1}
u\tilde{\kappa}(u,l) = \left\{\begin{array}{ll}
u\kappa(u/|l|,1/|l|), &\l \neq 0,\\
u\hat{\kappa}(u), &l = 0,
\end{array}
\right.
\ee
where $(a,b)\mapsto a\kappa(a,b)$, $a\geq 1+b$, $b\geq 0$, and $\mu\mapsto \mu
\hat{\kappa}(\mu)$, $\mu \geq 1$, are given in \cite{dHW06}, Section 2.1, and 
are strictly concave. In the case $l\neq 0$, \eqref{kapexplform1} provides 
strict concavity of $(u,l)\mapsto u\tilde{\kappa}(u,l)$ on $\cH^+=\{(u,l)
\in \cH\colon l>0\}$ and on $\cH^-=\{(u,l)\in \cH\colon l<0\}$, while in the 
case $l=0$ it provides strict concavity on $\overline\cH=\{(u,0),u\geq 1\}$. 
We already know that $(u,l)\mapsto u\tilde{\kappa}(u,l)$ is concave on $\cH$, 
which, by the strict concavity on $\cH^+$, $\cH^-$ and $\overline\cH$, implies 
strict concavity of $(u,l)\mapsto u\tilde{\kappa}(u,l)$ on $\cH$. 

\medskip\noindent
(ii) This follows from concavity of $l\mapsto \tilde{\kappa}(u,l)$ and the
fact that $\tilde{\kappa}(u,l) = \tilde{\kappa}(u,-l)$.

\medskip\noindent
(iii) This is a direct consequence of Lemma~\ref{convularge}.

\medskip\noindent
(iv) By (i) we have that  $u\mapsto u\tilde{\kappa}(u,l)$ is strictly concave 
on $[1+|l|,\infty)$. Therefore, proving that $\lim_{u\to \infty} u \tilde{\kappa}
(u,l)=\infty$ is sufficient to obtain that  $u\mapsto u\tilde{\kappa}(u,l)$ is 
strictly increasing. It is proven in \cite{dHW06}, Lemma 2.1.2 (iii), that 
$\lim_{\mu\to \infty} u\hat{\kappa}(u)=\infty$, so that \eqref{kapexplform1} 
completes the proof for $l=0$. If $l\neq 0$, then we use \eqref{kapexplform1} 
again and the variational formula in the proof of \cite{dHW06}, Lemma 2.1.1, 
to check that $\lim_{a\to \infty} a \kappa(a,b)=\infty$ for all $b>0$.
\end{proof}


\subsection{A generalization of Lemma \ref{conunif1}}
\label{A.3} 

In Section~\ref{proofofgene} we sometimes needed to deal with subsets of trajectories 
of the following form. Recall \eqref{add4}, pick $L\in \N$, $(u,l)\in \cH_L$ and 
$B_0, B_1\in \tfrac{Z}{L}$ such that
\be{condb}
B_1\,\geq 0\vee l\,\geq\, 0\wedge l\,\geq B_0 \quad \text{and}\quad B_1-B_0\geq 1.
\ee 
Denote by $\widetilde\cW_L(u,l,B_0,B_1)$ the subset of $\cW_L(u,l)$ containing those 
trajectories that are constrained to remain above $B_0 L$ and below $B_1L$ (see 
Fig.~\ref{figtratra}), i.e.,  
\begin{align}
\label{trajblockalt}
\widetilde\cW_L(u,l,B_0,B_1) &= \big\{\pi\in \cW_L(u,l) \colon\, 
B_0 L< \pi_{i,2}<B_1 L  \text{ for } i\in \{1,\dots,uL-1\}\big\},
\end{align}
and let 
\be{ttraj}
\widetilde\kappa_L(u,l,B_0,B_1) = \frac{1}{uL} \log |\widetilde\cW_L(u,l,B_0,B_1) |
\ee 
be the entropy per step carried by the trajectories in $\widetilde\cW_L(u,l,B_0,B_1)$. 
With Lemma~\ref{conunifalt1} below we prove that the effect on the entropy of the 
restriction induced by $B_0$ and $B_1$ in the set $\widetilde\cW_L(u,l)$ vanishes 
uniformly as $L\to \infty$.

\begin{figure}[htbp]
\begin{center}
\includegraphics[width=.48\textwidth]{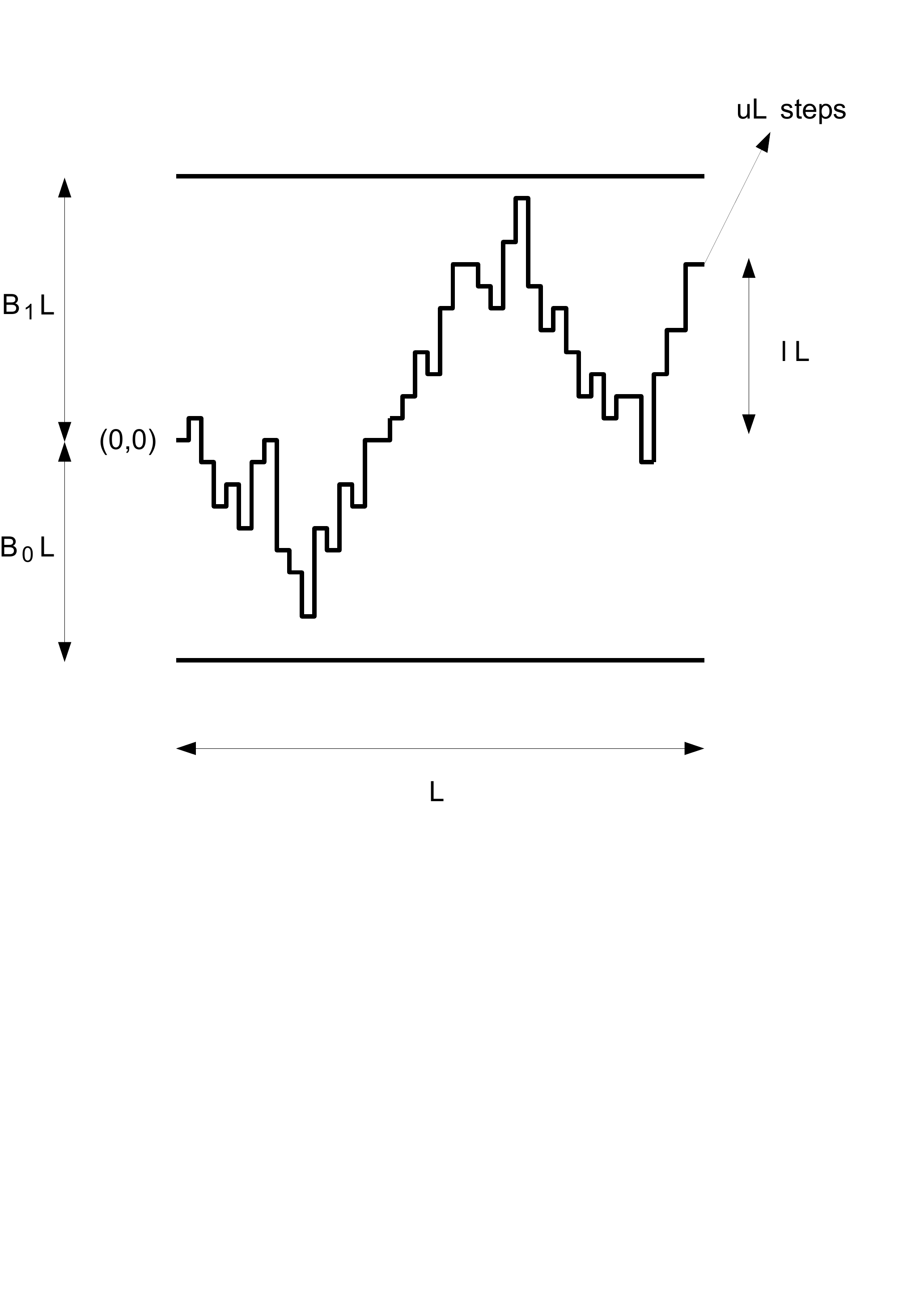}
\end{center}
\vspace{-4.3cm}
\caption{A trajectory in $\widetilde\cW_L(u,l,B_0,B_1)$.}
\label{figtratra}
\end{figure}

\begin{lemma}
\label{conunifalt1}
For every $\gep>0$ there exists an $L_\gep\in \N$ such that, for $L\geq L_\gep$, $(u,l)
\in \cH_L$ and $B_0,B_1\in \Z/L$ satisfying $B_1-B_0\geq 1$, $B_1\,\geq \max\{0,l\}$ 
and $B_0\leq \min\{0,l\}$,
\begin{align}
\label{unifalt}
& |\tilde{\kappa}_L(u,l,B_0,B_1)-\tilde{\kappa}_L(u,l)|
\leq \gep. 
\end{align}
\end{lemma}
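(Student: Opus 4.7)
The inclusion $\widetilde\cW_L(u,l,B_0,B_1)\subseteq\cW_L(u,l)$ gives the trivial bound $\tilde\kappa_L(u,l,B_0,B_1)\le\tilde\kappa_L(u,l)$, so only the reverse inequality (up to $\gep$) requires work. I split on the size of $u$: large $u$ is handled by Lemma~\ref{convularge}, and bounded $u$ by a ballot/reflection estimate combined with a local-limit comparison of partition functions.

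For the large-$u$ case, Lemma~\ref{convularge} produces an $M_\gep\in\N$ with $|\{\pi\in\cW_{uL}\colon\pi_{uL,1}=L\}|\le e^{\gep uL}$ for every $L\in\N$ and $u\ge M_\gep$; since $\cW_L(u,l)$ is a subset of this set, $\tilde\kappa_L(u,l)\le\gep$. The hypotheses $B_1\ge\max\{0,l\}$, $B_0\le\min\{0,l\}$ and $B_1-B_0\ge1$ make $\widetilde\cW_L(u,l,B_0,B_1)$ non-empty: pick any height strictly inside $(B_0 L,B_1 L)$, go vertically from $(0,0)$ to that height, perform $L$ east steps at that height interspersed with up/down excursions of height $1$ to consume exactly $uL$ steps, and join to $(L,lL)$. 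Consequently $0\le\tilde\kappa_L(u,l,B_0,B_1)\le\tilde\kappa_L(u,l)\le\gep$, which settles this regime.

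For bounded $u\in[1+|l|,M_\gep]$, my aim is to establish the stronger bound $|\widetilde\cW_L(u,l,B_0,B_1)|\ge c\,|\cW_L(u,l)|$ for some constant $c=c(M_\gep)>0$, which yields $\tilde\kappa_L(u,l)-\tilde\kappa_L(u,l,B_0,B_1)\le\log(1/c)/(uL)\to0$ uniformly. Encode each $\pi\in\cW_L(u,l)$ by its sequence $(Z_x)_{x=0}^L\in\Z^{L+1}$ of net column displacements; self-avoidance forces each column stretch to be monotone, making this a bijection onto integer sequences satisfying $\sum_x|Z_x|=(u-1)L$ and $\sum_x Z_x=lL$. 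The strip condition translates into the ballot-type constraint $\eta_x:=Z_0+\cdots+Z_{x-1}\in[B_0L,B_1L]$ for every $x\in\{0,\ldots,L+1\}$. Reflecting $(\eta_x)$ across $B_1L$ at the first time $\eta_\tau>B_1L$ places the walks first exiting above at height $B_1L+k$ in bijection with walks of identical total variation $(u-1)L$ ending at $(2B_1-l)L+2k$, $k\ge 1$, and analogously for exits below $B_0L$. Summing over $k$ and comparing $|\cW_L(u,l')|$ for different values of $l'$ via the strict concavity of $l'\mapsto u\tilde\kappa(u,l')$ from Lemma~\ref{l:lemconv2}(i), together with a uniform local-limit estimate on the finite-$L$ partition function $|\cW_L(u,\cdot)|$, bounds the total number of exiting walks by a constant multiple of $|\cW_L(u,l)|$, giving the required $c(M_\gep)$.

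\textbf{Main obstacle.} The reflection-entropy gap $\tilde\kappa(u,l)-\tilde\kappa(u,l+2k/L)$ is macroscopic when $l$ is bounded away from the maximizer $l'=0$, but degenerates in the subcases $B_1=\max\{0,l\}$, $B_0=\min\{0,l\}$ and especially when $l=0$, where the linear derivative $\partial_{l'}\tilde\kappa(u,l')|_{l'=l}$ vanishes. In that regime the first-order term in the reflection bound is useless, and one must quantify the strict concavity from Lemma~\ref{l:lemconv2}(i) as a second-order estimate $\tilde\kappa(u,l+2k/L)-\tilde\kappa(u,l)\asymp -c(u)k^2/L^2$, matched against a sharp local-limit asymptotic $|\cW_L(u,l')|\asymp L^{-1/2}\,e^{uL\tilde\kappa(u,l')}$; the resulting Gaussian-type sum over $k$ converges to a finite constant depending only on $M_\gep$. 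Making all of these estimates uniform over $(u,l,B_0,B_1)$ is the most delicate part of the argument.
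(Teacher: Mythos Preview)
Your split into large and bounded $u$, and the column--increment encoding $(Z_x)$ with partial sums $(\eta_x)$, are the right objects; the large-$u$ case via Lemma~\ref{convularge} is fine. The gap is in the bounded-$u$ regime, and it is a real one.

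First, your target $|\widetilde\cW_L(u,l,B_0,B_1)|\ge c\,|\cW_L(u,l)|$ with $c=c(M_\gep)>0$ is simply false at the boundary. Take $l=0$, $B_0=0$, $B_1=1$: the confined walks are bridges of the $(L{+}1)$-step process $(\eta_x)$ conditioned to stay strictly positive, and a cycle-lemma/ballot count shows that this fraction is of order $1/L$, not a constant. This is exactly the ``polynomial correction'' the paper invokes: the correct inequality is $|\widetilde\cW_L|\ge cL^{-\alpha}|\cW_L|$ for some $\alpha>0$, which still yields $\tilde\kappa_L(u,l)-\tilde\kappa_L(u,l,B_0,B_1)\le(\alpha\log L-\log c)/(uL)\to 0$ uniformly.

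Second, your reflection bound cannot deliver even this weaker conclusion, because it upper-bounds the \emph{complement}. In the regime $|l|\lesssim 1/\sqrt{L}$ with a tight barrier (say $B_0=0$, $l=0$), the reflected endpoints are $l'=-2k/L$, the entropy gap is second order, $uL\bigl(\tilde\kappa(u,0)-\tilde\kappa(u,2k/L)\bigr)\asymp c(u)\,k^2/L$, and hence
\[
\sum_{k\ge1}\frac{|\cW_L(u,2k/L)|}{|\cW_L(u,0)|}\ \asymp\ \sum_{k\ge1}e^{-c(u)k^2/L}\ \asymp\ \sqrt{L},
\]
which \emph{diverges}; your claim that this Gaussian-type sum converges to a finite constant is the error. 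The resulting reflection upper bound on the number of exiting walks is $\gtrsim\sqrt{L}\,|\cW_L(u,l)|$, larger than the total, hence vacuous. (When $|l|$ is bounded away from $0$ the first-order gap gives a convergent geometric sum, so your argument does work there; the failure is confined to $|l|$ near $0$ with one tight barrier, which the hypotheses do allow.) To repair this you must lower-bound $|\widetilde\cW_L|$ directly rather than bound its complement: for instance, force the first and last stretches to push the walk one unit into the open strip (a fixed multiplicative loss), and then apply a cycle-lemma/ballot estimate to the resulting $(L{-}1)$-step bridge in a strip of width at least $L-2$, which costs only a further factor polynomial in $L$.
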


\begin{proof}
The key fact is that $B_1-B_0 \geq 1$. The vertical restrictions $B_1\,\geq \max\{0,l\}$ 
and $B_0\leq \min\{0,l\}$ gives polynomial corrections in the computation of the 
entropy, but these corrections are harmless because $(B_1-B_0)L$ is large.  
\end{proof}


\section{Properties of free energies}
\label{B}

 
\subsection{Free energy along a single linear interface}
\label{B.1}

Also the free energy $\mu\mapsto\phi^\cI(\mu;\alpha,\beta)$ defined in 
Proposition~\ref{l:feinflim} can be extended from $\mathbb{Q}\cap [1,\infty)$ to
$[1,\infty)$, in such a way that $\mu\mapsto \mu\phi^\cI(\mu;\alpha,\beta)$ is concave and continous on $[1,\infty)$.  By concatenating trajectories, we can indeed check that 
$\mu\mapsto \mu\phi^\cI(\mu;\alpha,\beta)$ is concave on $\mathbb{Q}\cap [1,\infty)$. 
Therefore it is Lipschitz on every compact subset of $(1,\infty)$ and can be extended 
to a concave and continuous function on $(1,\infty)$. The continuity at $\mu=1$ comes 
from the fact that $\phi^\cI(1;\alpha,\beta)=0$ and $\lim_{\mu \downarrow 1} 
\phi^\cI(\mu)=0$, which is obtained by using Lemma~\ref{lele} below.

\bl{l:lemconv}
For all $(\alpha,\beta)\in\CONE$:\\
(i) $\mu \mapsto \mu \phi^\cI(\mu;\alpha,\beta)$ is strictly increasing on $[1,\infty)$ 
and $\lim_{\mu\to \infty} \mu\phi^\cI(\mu;\alpha,\beta)=\infty$.\\
(ii) $\lim_{\mu\to \infty}\phi^\cI(\mu;\alpha,\beta)=0$.
\el

\begin{proof}
(i) Clearly, $\phi^\cI(\mu;\alpha,\beta)\geq \widetilde{\kappa}(\mu,0)$ for $\mu\geq 1$.
Therefore Lemma \ref{l:lemconv2}(iv) implies that $\lim_{\mu\to \infty} \mu\phi^\cI
(\mu;\alpha,\beta)=\infty$. Thus, the concavity of $\mu\mapsto \mu \phi^\cI(\mu;\alpha,
\beta)$ is sufficient to obtain that it is strictly increasing on $[1,\infty)$.\\
(ii) See \cite{dHP07b}, Lemma 2.4.1(i).
\end{proof}

\noindent
Recall Assumption \ref{assu}, in which we assumed that $\mu \mapsto \mu \phi^\cI(\mu;
\alpha,\beta)$ is strictly concave on $[1,\infty)$. The next lemma states that the 
convergence of the average quenched free energy $\phi^\cI_L$ to $\phi^\cI$ as 
$L\to\infty$ is uniform on $\mathbb{Q} \cap [1,\infty)$.

\bl{l:feinflim1} 
For every $(\alpha,\beta)\in\CONE$ and $\gep>0$ there exists an $L_\gep\in \N$ 
such that 
\be{fesainfalt}
|\phi_L(\mu)-\phi(\mu)|\leq \gep \qquad \forall\,\mu\in 1+\tfrac{2\N}{L},\,
L\geq L_\gep.
\ee
\el

\begin{proof}
Similarly to what we did for Lemma~\ref{conunif1}, the proof can be done by 
treating separately the cases $\mu$ large, moderate and small. We leave the 
details to the reader.
\end{proof}


\subsection{Free energy in a single column}
\label{B.2}

We can extend $(\Theta,u)\mapsto \psi(\Theta,u)$ from $\cV_M^{*}$ to $\overline\cV_M^{*}$ 
by using the variational formulas in \eqref{Bloc of type I} and \eqref{Bloc of type NI,B} 
and by recalling that $\widetilde{\kappa}$ and $\phi^{\cI}$ have been extended to $\cH$ 
and $[1,\infty)$ in Appendices \ref{A.2} and \ref{B.1}.

Pick $M\in \N$ and recall \eqref{set2}. Define a distance $d_M$ on $\overline\cV_M$ 
as follows. Pick $\Theta_1,\Theta_2 \in \overline \cV_M$, abbreviate 
\be{defth1}
\Theta_1=(\chi_1,\Delta\Pi_1, b_{0,1},b_{1,1},x_1),
\qquad \Theta_2=(\chi_2,\Delta\Pi_2, b_{0,2},b_{1,2},x_2),
\ee 
and define
\be{dist}
d_M(\Theta_1,\Theta_2)= \sum_{j\in \Z} \frac{1_{\{\chi_1(j)\neq \chi_2(j)\}}}{2^{|j|}} +|\Delta \Pi_1-\Delta \Pi_2|+|b_{0,1}-b_{0,2}|+|b_{1,1}-b_{1,2}| 
\ee
so that $\widetilde{d}_M((\Theta_1,u_1),(\Theta_2,u_2))=\max \{|u_1-u_2|,d_M
(\Theta_1,\Theta_2)\}$ is a distance on $\overline{\cV}^{\,*,m}_M$ for which 
$\overline{\cV}^{\,*,m}_M$ is compact.

\bl{concavt}
For every $(M,m)\in \EIGH$ and $(\alpha,\beta)\in \CONE$,
\begin{align}
\label{fuun}
(u,\Theta) \mapsto u\,\psi(\Theta,u;\alpha,\beta)
\end{align}
is uniformly continuous on $\overline{\cV}^{\,*,m}_M$ endowed with $\widetilde{d}_M$. 
\el

\begin{proof}
Pick $(M,m)\in \EIGH$. By the compactness of $\overline{\cV}^{\,*,m}_M$, it suffices to show 
that $(u,\Theta) \mapsto u\,\psi(\Theta,u)$ is continuous on $\overline{\cV}^{\,*,m}_M$. 
Let $(\Theta_n,u_n)=(\chi_n,\Delta\Pi_n,b_{0,n},b_{1,n},u_n)$ be the general term of 
an infinite sequence that tends to $(\Theta,u)=(\chi,\Delta\Pi,b_{0},b_{1},u)$ 
in $(\overline{\cV}^{\,*,m}_M,\widetilde{d}_M)$. We want to show that $\lim_{n\to\infty}
u_n\psi(\Theta_n,u_n)=u \psi(\Theta,u)$. By the definition of $\widetilde{d}_M$, we 
have $\chi_n=\chi$ and $\Delta\Pi_n=\Delta\Pi$ for $n$ large enough. We assume that 
$\Theta\in \cV_{\AB}$, so that $\Theta_n\in \cV_{\AB}$ for $n$ large enough as well. 
The case $\Theta\in \cV_{\text{nint}}$ can be treated similarly.

Set
\be{defrm}
\cR_m=\{(a,h,l)\in [0,m]\times [0,1]\times \R\colon h+|l|\leq a\}
\ee
and note that $\cR_m$ is a compact set. Let $g\colon\,\cR_m\mapsto [0,\infty)$ be 
defined as $g(a,h,l)=a\,\widetilde{\kappa}(\tfrac{a}{h},\tfrac{l}{h})$ if $h>0$ 
and $g(a,h,l)=0$ if $h=0$. The continuity of $\widetilde{\kappa}$, stated in 
Lemma~\ref{l:lemconv2}(i), ensures that $g$ is continuous on $\{(a,h,l)\in
\cR_m\colon h>0\}$. The continuity at all $(a,0,l)\in \cR_m$ is obtained by 
recalling that $\lim_{u\to\infty}\tilde{\kappa}(u,l)=0$ uniformly in $l\in
[-u+1,u-1]$ (see Lemma~\ref{l:lemconv2}(ii-iii)) and that $\widetilde{\kappa}$ 
is bounded on $\cH$.

In the same spirit, we may set $\cR'_m=\{(u,h)\in [0,m]\times[0,1]\colon\,
h\leq u\}$ and define $g'\colon\,\cR'_m\mapsto [0,\infty)$ as $g'(u,h)
= u\,\phi^{\cI}(\tfrac{u}{h})$ for $h>0$ and $g'(u,h)=0$ for $h=0$. With the 
help of Lemma~\ref{l:lemconv} we obtain the continuity of $g'$ on $\cR'_m$ 
by mimicking the proof of the continuity of $g$ on $\cR_m$.

Note that the variational formula in \eqref{Bloc of type I} can be rewriten as 
\begin{align}
\label{Bloc of type IIa} 
u \,\psi(\Theta,u)
&=\sup_{(h),(a) \in \cL(l_A,\, l_B;\,u)} Q((h),(a),l_A,l_B),
\end{align}
with
\be{defQ}
Q((h),(a),l_A,l_B)=g(a_A,h_A,l_A)+g(a_B,h_B,l_B)+a_B \,
\tfrac{\beta-\alpha}{2}+g^{'}(a^\cI,h^\cI),
\ee
and with $l_A$ and $l_B$ defined in \eqref{bl}. Note that $\cL(l_A,\,l_B;\,u)$ is 
compact, and that $(h),(a)\mapsto Q((h),(a),l_A,l_B)$ is continuous on 
$\cL(l_A,\, l_B;\,u)$ because $g$ and $g'$ are continuous on $\cR_m$ and 
$\cR^{'}_m$, respectively. Hence, the supremum in \eqref{Bloc of type IIa} is 
attained.

Pick $\gep>0$, and note that $g$ and $g'$ are uniformly continuous on $\cR_m$ 
and $\cR'_m$, which are compact sets. Hence there exists an $\eta_\gep>0$ such 
that $|g(a,h,l)-g(a',h',l')|\leq \gep$ and $|g'(u,b)-g'(u',b')| \leq \gep$ when 
$(a,h,l),(a',h',l')\in  \cR_m$ and $(u,b),(u',b')\in \cR'_m$ are such that 
$|a-a'|,|h-h'|,|l-l'|,|u-u'|$ and $|b-b'|$ are bounded from above by $\eta_\gep$.

Since $\lim_{n\to \infty}(\Theta_n,u_n)=(\Theta,u)$ we also have that 
$\lim_{n\to\infty} b_{0,n}=b_0$, $\lim_{n\to \infty} b_{1,n}=b_1$ and 
$\lim_{n\to \infty} u_{n}=u$. Thus, $\lim_{n\to\infty} l_{A,n}= l_A$ and 
$\lim_{n\to \infty} l_{B,n}= l_B$, and therefore $|l_{A,n}- l_A|\leq \eta_\gep$, 
$|l_{B,n}- l_B|\leq \eta_\gep$ and $|u_n-u|\leq \eta_\gep$ for $n\geq n_\gep$ 
large enough.

For $n\in \N$, let $(h_n),(a_n)\in\cL(l_{A,n},\, l_{B,n};\,u_n)$ be a maximizer 
of \eqref{Bloc of type IIa} at $(\Theta_n,u_n)$, and note that, for $n\geq n_\gep$, 
we can choose $(\widetilde{h}_n),(\widetilde{a}_n)\in\cL(l_A,\,l_B;\,u)$ such 
that $|\widetilde{a}_{A,n}-a_{A,n}|$, $|\widetilde{a}_{B,n}-a_{B,n}|$, 
$|\widetilde{a}_{n}^\cI-a_{n}^\cI|$, $|\widetilde{h}_{A,n}-h_{A,n}|$, 
$|\widetilde{h}_{B,n}-h_{B,n}|$ and $|\widetilde{h}_{n}^\cI-h_{n}^\cI|$ are 
bounded above by $\eta_\gep$. Consequently,
\be{boundonpsi}
u_n \psi(\Theta_n,u_n)- u\psi(\Theta,u)
\leq Q((h_n),(a_n), l_{A,n},l_{B,n})-Q((\widetilde{h}_n),
(\widetilde{a}_n), l_{A},l_{B})\leq 3\gep.
\ee
We bound $u\psi(\Theta,u)-u_n \psi(\Theta_n,u_n)$ from above in a similar manner, 
and this suffices to obtain the claim.
\end{proof}

\bl{concav}
For every $\Theta\in \overline\cV_M$, the function $u \mapsto u\psi(\Theta,u)$ 
is continuous and strictly concave on $[t_\Theta,\infty)$.
\el

\begin{proof}
The continuity is a straightforward consequence of Lemma~\ref{concavt}: simply 
fix $\Theta$ and let $m\to\infty$. To prove the strict concavity, we note that 
the cases $\Theta\in \cV_{\AB}$ and $\Theta\in \cV_{\text{nint}}$ can be treated 
similarly. We will therefore focus on $\Theta\in \cV_{\AB}$.

For $l\in \R$, let
\be{defrmalt}
\cN_l=\{(a,h)\in [0,\infty) \times [0,1]\colon a\geq h+|l|\},
\quad \cN_l^+=\{(a,h)\in \cN_l\colon h>0\},
\ee
and let $g_l\colon\,\cN_l\mapsto [0,\infty)$ be defined as $g_l(a,h)=a\,
\widetilde{\kappa}(\tfrac{a}{h},\tfrac{l}{h})$ for $h>0$ and $g_l(a,h)=0$ for 
$h=0$. The strict concavity of $(u,l)\mapsto u\widetilde{\kappa}(u,l)$ on 
$\cH$, stated in Lemma \ref{l:lemconv2}(i), immediately yields that $g_l$ 
is strictly concave on $\cN_l^+$ and concave on $\cN_l$. Consequently, for 
all $(a_1,h_1)\in \cN_l^+$ and $(a_2,h_2)\cN_l\setminus \cN_l^+ $, $g_l$ is 
strictly concave on the segment $[(u_1,h_1),(u_2,h_2)]$.

Let $\widetilde\cN=\{(u,h)\in [0,\infty)\times [0,1]\colon\, h\leq u\}$ and 
define $\widetilde g\colon\,\widetilde \cN\mapsto [0,\infty)$ as 
$\widetilde g(u,h)= u\,\phi^{\cI}(\tfrac{u}{h})$ for $h>0$ and $\widetilde g(u,h)=0$ 
for $h=0$. The strict concavity of $u\mapsto u\phi^{\cI}(u)$ on $[1,\infty)$, 
stated in Assumption~\ref{assu}, immediately yields that $\widetilde g$ is 
strictly concave on $\widetilde\cN^+=\{(u,h)\in \widetilde \cN\colon h>0\}$ 
and concave on $\widetilde\cN$. Consequently, for all $(u_1,h_1)\in \widetilde
\cN^+$ and $(u_2,h_2)\in \widetilde\cN\setminus \widetilde{\cN}^+$, $\widetilde{g}$ 
is strictly concave on the segment $[(u_1,h_1),(u_2,h_2)]$.

Similarly to what we did in \eqref{Bloc of type IIa}, we can rewrite the variational 
formula in \eqref{Bloc of type I} as 
\begin{align}
\label{Bloc of type IIaa} 
u \,\psi(\Theta,u)
&=\sup_{(h),(a) \in \cL(l_A,\, l_B;\,u)} \widetilde{Q}((h),(a))
\end{align}
with
\be{defQtil}
\widetilde{Q}((h),(a))=g_{l_A}(a_A,h_A)+g_{l_B}(a_B,h_B)+a_B \,
\tfrac{\beta-\alpha}{2}+\widetilde{g}(u-a_A-a_B,1-h_A-h_B),
\ee
and the supremum in \eqref{Bloc of type IIaa} is attained. Next we show that if 
$(h),(a)\in \cL(l_A,\,l_B;\,u)$ realizes the maximum in \eqref{Bloc of type IIaa}, 
then $(h),(a)\notin \widetilde{\cL}(l_A,\,l_B;\,u)$ with
\be{restor}
\widetilde{\cL}(l_A,\, l_B;\,u) 
= \widetilde{\cL}_A(l_A,\, l_B;\,u)\cup\widetilde{\cL}_B(l_A,\, l_B;\,u)
\cup\widetilde{\cL}^{\, \cI}(l_A,\, l_B;\,u)
\ee
and
\begin{align}
\label{restore}
\nonumber \widetilde{\cL}_A(l_A,\, l_B;\,u)
&=\{(h),(a)\in \cL(l_A,\, l_B;\,u)\colon\,h_A=0 \ \ \text{and}\ \ a_A>l_A\},\\
\nonumber \widetilde{\cL}_B(l_A,\, l_B;\,u)
&=\{(h),(a)\in \cL(l_A,\, l_B;\,u)\colon\, h_B=0 \ \ \text{and}\ \ a_B>l_B\},\\
\widetilde{\cL}^{\,\cI}(l_A,\, l_B;\,u)
&=\{(h),(a)\in \cL(l_A,\, l_B;\,u)\colon\, h_I=0 \ \ \text{and}\ \ a_I>0\}.
\end{align}
Assume that $(h),(a)\in \widetilde{\cL}(l_A,\, l_B;\,u)$, and that $h_A>0$ or $h^\cI>0$. 
For instance, $(h),(a)\in \widetilde{\cL}^\cI(l_A,\, l_B;\,u)$ and $h_A>0$. Then, by 
Lemma~\ref{l:lemconv2}(iv), $\widetilde{Q}$ strictly increases when $a_A$ is replaced 
by $a_A+a^\cI$ and $a^\cI$ by $0$. This contradicts the fact that $(h),(a)$ is a maximizer. 
Next, if $(h),(a)\in \widetilde{\cL}(l_A,\,l_B;\,u)$ and $h_A=h^\cI=0$, then $h_B=1$, 
and the first case is $(h),(a)\in \widetilde{\cL}_A(l_A,\,l_B;\,u)$, while the second 
case is $(h),(a)\in \widetilde{\cL}^\cI(l_A,\, l_B;\,u)$. In the second case, as before, 
we replace $a_A$ by $a_A+a^\cI$ and $a^\cI$ by $0$, which does not change $\widetilde{Q}$ 
but yields that $a_A>l_A$ and therefore brings us back to the first case. In this first 
case, we are left with an expression of the form
\be{fca}
Q((h),(a))=g_{l_B}(a_B,1) + a_B\, \tfrac{\beta-\alpha}{2} 
\ee
with $h_A=h^\cI=0$ and $a_A>l_A$. Thus, if we can show that there exists an $x\in (0,1)$ 
such that 
\be{fca1}
g_{l_A}(a_A,x)+g_{l_B}(a_B,1-x)>g_{l_B}(a_B,1), 
\ee
then we can claim that $(h),(a)$ is not a maximizer of \eqref{Bloc of type IIaa} and 
the proof for $(h),(a)\notin \widetilde{\cL}(l_A,\,l_B;\,u)$ will be complete. 

To that end, we recall \eqref{kapexplform}, which allows us to rewrite the left-hand 
side in \eqref{fca1} as
\be{fca2}
g_{l_A}(a_A,x)+g_{l_B}(a_B,1-x)=a_A \,\kappa\big(\tfrac{a_A}{l_A},\tfrac{x}{l_A}\big)+
a_B \,\kappa\big(\tfrac{a_B}{l_B},\tfrac{1-x}{l_B}\big)+ a_B\, \tfrac{\beta-\alpha}{2}.
\ee
We recall \cite{dHW06}, Lemma 2.1.1, which claims that $\kappa$ is defined on 
$\DOM=\{(a,b)\colon a\geq 1+b, b\geq 0\}$, is analytic on the interior of $\DOM$  
and is continuous on $\DOM$. Moreover, in the proof of this lemma, an expression 
for $\partial_b\, \kappa(a,b)$ is provided, which is valid on the interior of $\DOM$. 
From this expression we can easily check that if $a>1$, then $\lim_{b\to 0} \partial_b\,
\kappa(a,b)=\infty$. Therefore, by the continuity of $\kappa$ on $(a_A/l_A,0)$ with 
$a_A/l_A>1$ we can assert that the derivative with respect to $x$ of the left-hand side 
in \eqref{fca2} at $x=0$ is infinite, and therefore there exists an $x>0$ such that 
\eqref{fca1} is satisfied.

Pick $u_1>u_2\geq t_\Theta$, and let $(h_1),(a_1) \in \cL(l_A,\, l_B;\,u_1)$ and 
$(h_2),(a_2) \in \cL(l_A,\, l_B;\,u_2)$ be maximizers of \eqref{Bloc of type IIaa} 
for $u_1$ and $u_2$, respectively. We can write
\begin{align}
\label{aa}
\nonumber (a_1),(h_1)
&=\big(a_{A,1},a_{B,1},a^{\cI}_1),(h_{A,1},h_{B,1},h^{\cI}_1\big),\\
(a_2),(h_2)
&=\big(a_{A,2},a_{B,2},a^{\cI}_2),(h_{A,2},h_{B,2},h^{\cI}_2\big).
\end{align}
Thus, $(\tfrac{a_1+a_2}{2}),(\tfrac{h_1+h_2}{2})\in \cL(l_A,\,l_B;\,\tfrac{u_1+u_2}{2})$ 
and, with the help of the concavity of $g_{l_A}, g_{l_B},\widetilde{g}$ proven above, 
we can write
\be{aaalt}
\tfrac{u_1+u_2}{2}\,\psi(\Theta,\tfrac{u_1+u_2}{2})
\geq \widetilde{Q}((\tfrac{a_1+a_2}{2}), (\tfrac{h_1+h_2}{2}))
\geq \tfrac12 \big(u_1\,\psi(\Theta,u_1)+u_2\,\psi(\Theta,u_2)\big).
\ee
We have proven above that $(a_1),(h_1) \notin \widetilde{\cL}(l_A,\, l_B;\,u_1)$ and 
$(a_2),(h_2) \notin \widetilde{\cL}(l_A,\, l_B;\,u_2)$. Thus, we can use \eqref{defQtil} 
and the strict concavity of $g_{l_A}, g_{l_B},\widetilde{g}$ on $\cN_{l_A}^+,\cN_{l_B}^+
\widetilde{\cN}^+$, to conclude that the right-most inequality in \eqref{aaalt} 
is an equality only if 
\be{}
\begin{aligned}
&(a_{A,1},h_{A,1}) = (a_{A,2},h_{A,2}), \quad (a_{B,1},h_{B,1}) = (a_{B,2},h_{B,2}),\\
&(u_1-a_{A,1}-a_{B,1},1-h_{A,1}-h_{B,1}) = (u_2-a_{A,2}-a_{B,2},1-h_{A,2}-h_{B,2}),
\end{aligned}
\ee 
which clearly is not possible because $u_1>u_2$.
\end{proof}


\section{Concentration of measure}
\label{Ann2}

Let $\cS$ be a finite set and let $(X_i,\cA_i,\mu_i)_{i\in \cS}$ be a family of 
probability spaces. Consider the product space $X=\prod_{i\in \cS} X_i$ endowed 
with the product $\sigma$-field $\cA=\otimes_{i\in \cS}\cA_i$ and with the product 
probability measure $\mu=\otimes_{i\in \cS} \mu_i$.  

\begin{theorem}
\label{theoco} {\rm (Talagrand~\cite{T96})}
Let $f\colon\,X\mapsto \R$ be integrable with respect to $(\cA,\mu)$ and, for 
$i\in \cS$, let $d_i>0$ be such that $|f(x)-f(y)|\leq d_i$ when $x,y\in X$ differ 
in the $i$-th coordinate only. Let $D=\sum_{i\in \cS} d_i^2$. Then, for all $\gep>0$,
\be{gteo}
\mu\left\{x\in X\colon \left|f(x)-\int fd\mu\right|>\gep\right\}
\leq 2 e^{-\frac{\gep^2}{2D}}.
\ee
\end{theorem}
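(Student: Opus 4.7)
\textbf{Proof plan for Theorem~\ref{theoco}.} The natural strategy is the martingale method. Order the index set $\cS = \{1,\dots,n\}$ and, on the product space $(X,\cA,\mu)$, consider the Doob martingale associated with $f$ relative to the filtration generated by the coordinate projections: set $\cF_0 = \{\emptyset,X\}$ and $\cF_i = \sigma(X_1,\dots,X_i)$ for $1 \leq i \leq n$, and define
\be{ma1}
M_i = \int f(X_1,\dots,X_i,y_{i+1},\dots,y_n)\,d(\mu_{i+1}\otimes\cdots\otimes\mu_n)(y_{i+1},\dots,y_n),
\ee
so that $M_0 = \int f\,d\mu$ and $M_n = f$, and $(M_i)_{i=0}^n$ is a martingale with respect to $(\cF_i)$. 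The entire task reduces to two ingredients: a pointwise control of the increments, and an exponential deviation bound for martingales with bounded increments.

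First, I would establish the increment bound $|M_i - M_{i-1}| \leq d_i$ almost surely. Writing out the conditional expectation, $M_i - M_{i-1}$ is the integral over $(y_{i+1},\dots,y_n)$ of the difference between $f(X_1,\dots,X_i,y_{i+1},\dots,y_n)$ and its $\mu_i$-average in the $i$-th slot. Applying Fubini and using the hypothesis $|f(x)-f(y)| \leq d_i$ whenever $x,y$ differ only in the $i$-th coordinate, the integrand is dominated by $d_i$ in absolute value, hence $|M_i - M_{i-1}| \leq d_i$.

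Next, I would invoke the Azuma--Hoeffding inequality: for a martingale $(M_i)$ with $|M_i - M_{i-1}| \leq c_i$ a.s., one has, for all $t > 0$,
\be{ma2}
\mu\bigl(|M_n - M_0| > t\bigr) \leq 2\exp\Bigl(-\frac{t^2}{2\sum_{i=1}^n c_i^2}\Bigr).
\ee
The proof of \eqref{ma2} itself proceeds by Chernoff: bound $\mathbb{E}[e^{\lambda(M_n - M_0)}]$ by iterated conditioning, at each step applying Hoeffding's lemma to the conditionally bounded increment $M_i - M_{i-1} \in [-c_i,c_i]$ to obtain $\mathbb{E}[e^{\lambda(M_i - M_{i-1})}\mid \cF_{i-1}] \leq e^{\lambda^2 c_i^2/2}$; one then optimizes over $\lambda > 0$ and sums the upper and lower tails. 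Plugging $c_i = d_i$ and $t = \gep$ into \eqref{ma2} yields exactly the claim.

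The only delicate point is the increment bound, and it is not really hard once one notices that the martingale differences can be written as an integral of bounded-differences terms; the rest is a standard exponential martingale computation. No measurability subtleties arise because $\cS$ is finite and all spaces are arbitrary probability spaces (the product structure is all that matters). Thus the full argument is essentially: Doob martingale $+$ bounded-differences $+$ Azuma--Hoeffding.
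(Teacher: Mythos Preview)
Your proof is correct: this is the standard martingale (McDiarmid/Azuma--Hoeffding) argument for the bounded-differences inequality, and the increment bound $|M_i-M_{i-1}|\le d_i$ together with Hoeffding's lemma applied conditionally yields exactly the constant $2D$ in the exponent. There is nothing to compare against, however, because the paper does not prove Theorem~\ref{theoco} at all---it is simply quoted from Talagrand~\cite{T96} and then applied in the corollary that follows.
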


The following corollary of Theorem~\ref{theoco} was used several times in the paper. 
Let $(\alpha,\beta)\in \CONE$ and let $(\xi_i)_{i\in\N}$ be an i.i.d.\ sequence 
of Bernouilli trials taking the values $-\alpha$ and $\beta$ with probability $\tfrac12$ 
each. Let $l\in \N$, $T\colon\,\,\{(x,y)\in\Z^2\times \Z^2\colon |x-y|=1\}\to\{0,1\}$ 
and $\Gamma\subset\cW_l$ (recall \eqref{defw}). Let $F_l\colon\,[-\alpha,\alpha]^l
\to \R$ be such that
\be{defF}
F_l(x_1,\dots,x_l) = \log \sum_{\pi\in \Gamma} e^{\sum_{i=1}^{l}
x_i\, T( (\pi_{i-1},\pi_i))}. 
\ee
For all $x,y\in [-\alpha,\alpha]^l$ that differ in one coordinate only we have 
$|F_l(x)-F_l(y)|\leq 2\alpha$. Therefore we can use Theorem~\ref{theoco} with 
$\cS=\{1,\dots,l\}$, $X_i=[-\alpha,\alpha]$ and $\mu_i=\tfrac{1}{2} (\delta_{-\alpha} 
+ \delta_{\beta})$ for all $i\in \cS$, and $D=4\alpha^2 l$, to obtain that there 
exist $C_1,C_2>0$ such that, for every $l\in\N$, $\Gamma\subset \cW_n$ and 
$T\colon\,\{(x,y)\in\Z^2\times \Z^2\colon |x-y|=1\}\to \{0,1\}$,
\begin{equation}
\label{concmesut}
\P\big(|F_l(\xi_1,\dots,\xi_m)-\E(F_l(\xi_1,\dots,\xi_m))|>\eta\big)
\leq C_1e^{-\tfrac{C_2\eta^2}{l}}.
\end{equation}  


\section{Large deviation estimate}
\label{Computation}

Let $(\xi_i)_{i\in\N}$ be an i.i.d.\ sequence of Bernouilli trials taking values 
$\beta$ and $-\alpha$ with probability $\frac12$ each. For $N\leq n\in \N$, denote 
by $\cE_{n,N}$ the set of all ordered sequences of $N$ disjoint and non-empty 
intervals included in $\{1,\dots,n\}$, i.e.,
\begin{align}
\label{add26}
\nonumber\cE_{n,N} &= \big\{(I_j)_{1\leq j\leq N}\subset\{1,\dots,n\}\colon 
I_j=\{\min I_j,\dots,\max I_j\} \,\,\forall\,1\leq j\leq N,\\
& \max I_j<\min I_{j+1} 
\,\,\forall\,1\leq j\leq N-1\ \text{and}\ I_j \neq 
\emptyset\,\,\forall\,1\leq j\leq N\big\}.
\end{align}
For $(I)\in \cE_{n,N}$, let $T(I)=\sum_{j=1}^N |I_j|$ be the cumulative length of 
the intervals making up $(I)$. Pick $\gamma>0$ and $M\in \N$, and denote by 
$\widehat{\cE}_{n,M}^{\,\gamma}$ the set of those $(I)$ in $\cup_{1\leq N\leq (n/M)}\,
\cE_{n,N}$ that have a cumulative length larger than $\gamma n$, i.e.,
\be{add27}
\widehat{\cE}_{n,M}^{\,\gamma} = \cup_{N=1}^{n/M}\big\{(I)\in \cE_{n,N}
\colon\,T(I)\geq \gamma n\big\}.
\ee
Next, for $\eta>0$ set 
\be{add28}
\cQ_{n,M}^{\gamma,\eta} = \bigcap_{\,(I)\in \widehat{\cE}_{n,M}^{\,\gamma}} 
\left\{\sum_{j=1}^N \sum_{i\in I_j} \xi_i
\leq (\tfrac{\beta-\alpha}{2}+\eta)\,T(I)\right\}.
\ee

\begin{lemma}
\label{lele}
For all $(\alpha,\beta)\in \CONE$, $\gamma>0$ and $\eta>0$ there exists an $\widehat{M}
\in \N$ such that, for all $M\geq \widehat{M}$,
\be{convsubset}
\lim_{n\to \infty} P((\cQ_{n,M}^{\gamma,\eta})^c)=0.
\ee
\end{lemma}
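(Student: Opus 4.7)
The strategy is a first-moment (union) bound combined with Cramér's theorem. Fix $(\alpha,\beta)\in\CONE$, $\gamma>0$ and $\eta>0$, and abbreviate $\mu=(\beta-\alpha)/2$. Since the $(\xi_i)_{i\in\N}$ are bounded i.i.d.\ random variables with mean $\mu$, classical Cramér theory (see e.g.\ den Hollander~\cite{dH}, Chapter 1) provides a constant $c=c(\eta)>0$ such that, for every $t\in\N$ and every finite $J\subset\N$ with $|J|=t$,
\[
\P\Big(\sum_{i\in J}\xi_i\geq(\mu+\eta)t\Big)\leq e^{-ct}.
\]
Applying this to $J=\bigcup_{j=1}^{N} I_j$ with $t=T(I)$, we get for every $(I)\in\widehat{\cE}_{n,M}^{\,\gamma}$,
\[
\P\Big(\sum_{j=1}^{N}\sum_{i\in I_j}\xi_i>(\mu+\eta)\,T(I)\Big)\leq e^{-c\,T(I)}\leq e^{-c\gamma n},
\]
where the last inequality uses $T(I)\geq\gamma n$.

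\textbf{Counting interval systems.} A simple stars-and-bars argument (decompose $\{1,\dots,n\}$ into $N$ non-empty intervals together with $N+1$ gaps, the $N-1$ interior gaps of length $\geq 1$) yields $|\cE_{n,N}|=\binom{n+1}{2N}$. For $M$ large enough one has $2n/M\leq(n+1)/2$, so $N\mapsto\binom{n+1}{2N}$ is monotone increasing on $[1,\lfloor n/M\rfloor]$, and the inequality $\binom{n+1}{k}\leq(e(n+1)/k)^{k}$ gives
\[
|\widehat{\cE}_{n,M}^{\,\gamma}|\leq\sum_{N=1}^{\lfloor n/M\rfloor}\binom{n+1}{2N}\leq(n+1)\binom{n+1}{2\lfloor n/M\rfloor}\leq\exp\!\Big(\tfrac{2n}{M}\log\tfrac{eM}{2}+o(n)\Big).
\]

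\textbf{Union bound and conclusion.} Combining the previous two displays,
\[
\P\big((\cQ_{n,M}^{\,\gamma,\eta})^{c}\big)\leq\sum_{(I)\in\widehat{\cE}_{n,M}^{\,\gamma}}e^{-c\,T(I)}\leq|\widehat{\cE}_{n,M}^{\,\gamma}|\,e^{-c\gamma n}\leq\exp\!\Big(-n\big[c\gamma-\tfrac{2}{M}\log\tfrac{eM}{2}\big]+o(n)\Big).
\]
Since $(2/M)\log(eM/2)\to 0$ as $M\to\infty$, one can choose $\widehat{M}=\widehat{M}(\gamma,\eta)$ so large that $c\gamma-(2/M)\log(eM/2)\geq c\gamma/2$ for every $M\geq\widehat{M}$. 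For such $M$ the right-hand side decays exponentially in $n$, which establishes \eqref{convsubset}.

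\textbf{Main obstacle.} The delicate point is the combinatorial explosion of $\widehat{\cE}_{n,M}^{\,\gamma}$: without any control on $N$, the entropy of arbitrary subsets of $\{1,\dots,n\}$ grows like $n\log 2$ and would swamp the Cramér gain $e^{-c\gamma n}$. The restriction $N\leq n/M$ (which in the geometric applications of the lemma reflects the fact that each ``thick'' column consumes at least $M$ vertical steps, capping their number) is precisely what saves the argument: it limits the combinatorial entropy to roughly $(2n/M)\log(eM/2)$, which is driven below any fixed positive rate $c\gamma$ by choosing $M$ sufficiently large.
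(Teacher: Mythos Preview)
Your proof is correct and follows essentially the same route as the paper: a Cram\'er bound $e^{-c\,T(I)}\leq e^{-c\gamma n}$ for each interval system, a union bound over $\widehat{\cE}_{n,M}^{\,\gamma}$, and a binomial count $\sum_{N\leq n/M}\binom{n+1}{2N}$ (the paper uses $\binom{n}{2N}$) whose exponential growth rate can be driven below $c\gamma$ by taking $M$ large. Your final estimate is slightly more explicit than the paper's, but the argument is the same.
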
 

\begin{proof}
An application of Cram\'er's theorem for i.i.d.\ random variables gives that
there exists a $c_\eta>0$ such that, for every $(I)\in \widehat{\cE}_{n,M}^{\,\gamma}$,
\be{add29}
\P_\xi\bigg(\sum_{j=1}^N \sum_{i\in I_j} 
\xi_i\geq (\tfrac{\beta-\alpha}{2}+\eta)\, T(I)\bigg)\leq\, 
e^{-c_\eta T(I)}\leq\, e^{-c_\eta\gamma n},
\ee
where we use that $T(I)\geq \gamma n$ for every $(I)\in \widehat{\cE}_{n,M}^{\,\gamma}$. 
Therefore
\begin{align}
\P_\xi((\cQ_{n,M}^{\gamma,\eta})^c)&\leq  |\widehat{\cE}_{n,M}^{\,\gamma}| 
e^{-c(\eta)\gamma n },
\end{align}
and it remains to bound  $|\widehat{\cE}_{n,M}^{\,\gamma}|$ as
\be{domiE}
\widehat{\cE}_{n,M}^{\,\gamma} = \sum_{N=1}^{n/M}\big|\big\{(I)\in \cE_{n,N}
\colon T(I)\geq \gamma n\big\}\big|
\leq \sum_{N=1}^{n/M} \binom{n}{2N}, 
\ee
where we use that choosing $(I)\in \cE_{n,N}$ amounts to choosing in $\{1,\dots,n\}$ 
the end points of the $N$ disjoint intervals. Thus, the right-hand side of \eqref{domiE} 
is at most $(n/M) \binom{n}{2n/M}$, which for $M$ large enough is 
$o( e^{c(\eta)\gamma n})$ as $n\to \infty$. 
\end{proof}

\end{appendix}


\end{document}